\newtheorem{theorem}{Theorem}[section]
\newtheorem{remark}{Remark}[section]
\newtheorem{lemma}{Lemma}[section]
\newtheorem{definition}{Definition}[section]
\newtheorem{proposition}[theorem]{Proposition}
\newcommand{\n}{\rho}
\newcommand{\ti}{\tilde}
\newcommand{\lm}{\lambda}
\renewcommand{\div}{ {\rm div }  }
\newcommand{\pa}{\partial}
\renewcommand{\r}{\mathbb{R}}
\newcommand{\bi}{\bibitem}
\newcommand{\ia}{\int_0^T}
\newcommand{\bt}{\begin{theorem}}
\newcommand{\bl}{\begin{lemma}}
\newcommand{\el}{\end{lemma}}
\newcommand{\et}{\end{theorem}}
\newcommand{\ga}{\gamma}
\newcommand{\curl}{{\rm curl} }
\newcommand{\de}{\delta}
\newcommand{\ve}{\varepsilon}
\newcommand{\la}{\label}
\newcommand{\ol}{\overline}
\newcommand{\bn}{\begin{eqnarray}}
\newcommand{\en}{\end{eqnarray}}
\newcommand{\bnn}{\begin{eqnarray*}}
\newcommand{\enn}{\end{eqnarray*}}
\newcommand{\bnnn}{\begin{eqnarray*}}
\newcommand{\ennn}{\end{eqnarray*}}
\newcommand{\ba}{\begin{aligned}}
\newcommand{\ea}{\end{aligned}}
\newcommand{\be}{\begin{equation}}
\newcommand{\ee}{\end{equation}}
\def\O{{\Omega }}
\def\norm[#1]#2{\|#2\|_{#1}}
\newcommand{\si}{\sigma}
\def\la{\label}
\def\na{\nabla}
\title{On   Compressible Navier-Stokes Equations Subject to Large   Potential Forces %and Vacuum
 with Slip Boundary Conditions in 3D Bounded Domains\thanks{  Email:  gotry@xmu.edu.cn (G.C.Cai); abinhuang@gmail.com (B. Huang);  shixd@mail.buct.edu.cn (X. D. Shi)}  }
 \author{Guocai C{\small AI}$^{a}$,
 Bin H{\small UANG}$^{b}$,   Xiaoding S{\small HI}$^{b}$,   \\
{\normalsize a.  School of Mathematical Sciences, }\\ {\normalsize  Xiamen University, Xiamen 361005, P. R. China;} \\[3mm]
{\normalsize b.  College of Mathematics and Physics, }\\ {\normalsize  Beijing University of Chemical Technology, Beijing 100029, P. R. China}}
\date{ }
\begin{document}
\maketitle

 \begin{abstract}
 We deal with the barotropic compressible Navier-Stokes equations subject to large external potential forces with slip boundary condition in a 3D simply connected bounded domain, whose smooth boundary has a finite number of 2D connected components. The global existence of  strong or classical solutions to the initial boundary value problem of this system is established provided the initial energy is suitably small. Moreover, the density has large oscillations and contains vacuum states. Finally, we show that the global strong or classical solutions   decay exponentially in time to the equilibrium in some Sobolev's spaces, but the oscillation of the density will grow unboundedly in the long run with an exponential rate when the initial density contains vacuum states.
 \end{abstract}

Keywords: compressible Navier-Stokes equations; slip boundary condition; vacuum; large external forces; global existence; large-time behavior.

\section{Introduction}

The motion of three-dimensional viscous compressible barotropic flows is governed by the compressible Navier-Stokes equations
   \be \la{a1}  \begin{cases}\n_t+{\rm div} (\n u)=0,\\
 (\n u)_t+{\rm div}(\n u\otimes u)-\mu\Delta u-(\mu+\lambda)\na {\rm
div} u +\na P(\n) =\n f, \end{cases}\ee
 where $(x,t)\in\Omega\times (0,T]$, $\Omega$ is a domain in $\r^{N}$, $t\ge 0$ is time,  and $x$  is the spatial coordinate. $\rho\geq0,\,\, u=(u^1,\cdots,u^N)$ and \be P(\rho)=a\rho^{\gamma}\, (a>0,\gamma>1)\ee
are the unknown fluid density, velocity and pressure, respectively. We mainly consider the case that the external force $f(x)$ is a gradient of a scalar potential, that is,  \be f(x)=\nabla\psi (x)\ee The constants $\mu$ and $\lambda$ are the shear and bulk viscosity coefficients respectively  satisfying the following physical restrictions: \be\la{h3} \mu>0,\quad 2\mu +  {N} \lambda\ge 0.
\ee

In this paper, we  assume that $\Omega$ is a simply connected bounded domain in $\r^3$, its boundary $\partial\Omega$ is of class $C^{\infty}$ and only has a finite number of 2-dimensional connected components. In addition, the system is studied subject to the given initial data
\be \la{h2} \n(x,0)=\n_0(x), \quad \n u(x,0)= \n_0u_0(x),\quad x\in \Omega,\ee
and  slip boundary condition
\be \la{ch1} u\cdot n=0\,\, \text{and} \,\,\,\curl u\times n=0 \,\,\,\text{on} \,\,\,\partial\Omega,\ee
where $n=(n^1,n^2,n^3)$ is the unit outward normal vector on $\partial\Omega$.

The first condition in \eqref{ch1} is the non-penetration boundary condition,
while the second one is also known in the form
\be \la{Navi1}
(D(u)n)_\tau=-\kappa_\tau u_\tau,
\ee
where $D(u) = (\nabla u+(\nabla u)^{\rm tr})/2$ is the deformation tensor, $\kappa_\tau$ is the corresponding normal curvature of $\partial\Omega$ in the $\tau$ direction. Therefore, in the case that $\partial\Omega$ is of constant curvature, \eqref{ch1} is a special Navier-type slip boundary condition (see \cite{CL1}), in which there is a stagnant layer of fluid close to the wall allowing a fluid to slip, and the slip velocity is proportional to the shear stress. This type of boundary condition was originally introduced by Navier \cite{Nclm1} in 1823, which was followed by  many applications, numerical studies and analysis for various fluid mechanical problems, see, for instance \cite{Cpfc1, Itt2, se2} and the references therein. In fact, all our results of the paper are also valid for more general boundary conditions including Navier-type slip boundary condition (see Remark \ref{u0A} below).

%%%%%%%%%%%%%%%%%%%%%%%%%%%%%%%%%%%%%%%%%%%%
The compressible Navier-Stokes system  has been attracted a lot of attention and significant progress has been made in the analysis of the well-posedness and dynamic behavior. we only briefly review some results related to the existence of strong or classical solutions and large-time behavior. The local solvability in classical spaces subject to the various boundary conditions was established by Serrin \cite{se1}, Nash \cite{Na}, Solonnikov \cite{Sol1}, Tani \cite{TA1} early and then some further works were given by Cho-Choe-Kim (see \cite{K1} \cite{K3}, \cite{K2}), Huang \cite{hxd1}. For the whole space $\r^3$ and without external forces, the global classical solutions were first obtained by Matsumura-Nishida \cite {M1} for initial data close to a nonvacuum equilibrium in $H^{3}$. It is worth mentioned that their results have been improved by  Huang-Li-Xin \cite{hlx1} and Li-Xin \cite{jx01}, in which the global existence
of classical solutions to the Cauchy problem for the barotropic compressible Navier-Stokes equations is obtained with smooth initial data that are of small energy but
possibly large oscillations with constant state as far field which could be either vacuum
or nonvacuum.
Very recently, for the barotropic compressible
Navier-Stokes equations in a bounded domain with slip boundary conditions,
Cai-Li \cite{CL1} proved that the classical solution of the initial-boundary-value problem in the absence of exterior forces exists globally with vacuum and small energy but possibly large oscillations.

For compressible Navier-Stokes system with external forces, some early research work focused on small external forces (see \cite{M1,M2,MN2,ST1} and the references therein), mainly because the large external forces have a powerful influence on the dynamic motion of compressible flows and bring some serious difficulties (cf. \cite{fe2,lm1,MY1,Wei1}). On the one hand, there are many results  regarding the large-time behavior of weak solutions to the problem \eqref{a1}. Feireisl-Petzeltov\'{a} \cite{fe2}, Novotny-Stra\v{s}kraba \cite{ANIS1} showed that for different boundary conditions, the density of any global weak solution converges to the steady state density in $L^q$ space for some $q>\frac{3}{2}$ as time goes to infinity if there exists a
unique steady state. On the other hand,  it seems that  there are few results on the global existence of classical or strong solutions to the system \eqref{a1} in $\r^3$ with large external forces except for that of Li-Zhang-Zhao \cite{LZZ1}, where they proved that the Cauchy problem has a unique global strong solution with large oscillations and interior vacuum, provided the initial data are of small energy and the unique steady state is strictly away from vacuum. However, their method  depends crucially on the Cauchy problem and
cannot be applied directly to the bounded domains.
Therefore, our main purpose in the paper is to study the mechanism of the influence    of  the  large potential forces to the compressible system \eqref{a1} with  Navier-slip boundary conditions in bounded domains.
%%%%%%%%%%%%%%%%%%%%%%%%%%%%%%%%%%%%%%%%%%%%

%In the case that $\Omega$ is a bounded domain in $\r^2$ or $\r^3$, we have to impose some boundary conditions on $\partial\Omega$. %Two kinds of boundary conditions are widely involved, which are called nonslip (i.e, $ u = 0$ on $\partial\Omega$) and slip boundary condition. Since a priori estimates on the boundary may be needed to consider in the study of the system \eqref{a1} with certain boundary conditions, it will bring some extra difficulties.  %in which they adopted some new techniques to obtain necessary a priori estimates, especially the boundary estimates, compared with the work in \cite{hlx1,jx01} for the Cauchy problem of the compressible Navier-Stokes equations.

Before stating the main results, we explain the notations and conventions used throughout this paper. We denote
$$ \int f dx=\int_{\Omega}fdx.$$
   For   integer $k$ and $1\leq q<+\infty$, $W^{k,q}(\Omega)$ is the standard Sobolev spaces and  $$ W_0^{1,q}(\Omega)=\{u\in W^{1,q}(\Omega)~\text{:}~u~ \text{is equipped with zero trace on } \partial{\Omega}\}.$$ We also denote
$$H^k(\Omega)=W^{k,2}(\Omega),~~ H_0^1(\Omega)=W_0^{1,2}(\Omega).$$
For simplicity, we denote $L^q(\Omega)$, $W^{k,q}(\Omega)$, $H^k(\Omega)$, $H^1_0(\Omega)$ by $L^q$,  $W^{k,q}$, $H^k$, $H^1_0$ respectively.

For two $3\times 3$  matrices $A=\{a_{ij}\},\,\,B=\{b_{ij}\}$, the symbol $A\colon  B$ represents the trace of $AB$, that is,
 $$ A\colon  B\triangleq \text{tr} (AB)=\sum\limits_{i,j=1}^{3}a_{ij}b_{ji}.$$

Finally, for $v=(v^1,v^2,v^3)$, we denote $\nabla_iv\triangleq(\partial_iv^1,\partial_iv^2,\partial_iv^3)$ for $i=1,2,3,$ and the
material derivative of $v$   by  $\dot v\triangleq v_t+u\cdot\nabla v$.

%%%%%%%%%%%%%%%%%%%%%%%%%%%%%%%%%%%%%%%%%%%%%%%%%%%%%%%%
It is natural to expect an equilibrium density $\rho_s=\rho_s(x)$ and velocity $u_s=u_s(x)$ to the initial boundary-value problem \eqref{a1}--\eqref{ch1}, which is a solution of the rest state equations
\be\la{NSsdv}
\begin{cases} \div (\rho_s u_s)=0 \ \ \text{         in } \Omega,\\
-\mu\Delta u_s-(\lambda+\mu)\nabla\div u_s+\nabla P(\rho_s)= \rho_s \nabla \psi \ \ \text{         in } \Omega,\\
 u_s\cdot n=0,\,\,\, \curl u_s \times n=0 \ \ \text{         on } \partial\Omega,\\
 \int\rho_s dx=\int\rho_0 dx.
\end{cases}
\ee
We have the following conclusion.
\begin{lemma}[\cite{M0}]\label{lm1.0}
Assume that $\Omega$ is a bounded domain with smooth boundary, and $\psi $ satisfies
\be \la{Fc1} \psi\in H^2,\,\,
\int\left(\frac{\gamma-1}{a\gamma}(\psi-\inf_{\overline{\Omega}}\psi)\right)^\frac{1}{\gamma-1}dx<\int\rho_0dx,
\ee
then there exists a unique solution $(\rho_s, 0)$ of \eqref{NSsdv} such that
\be\la{rhos1}
\rho_s\in H^2,\,\,\, 0<\underline{\rho}\leq \inf_{\overline{\Omega}}\rho_s\leq \sup_{\overline{\Omega}}\rho_s\leq \bar\rho,
\ee
where $\underline{\rho}$ and $\bar\rho$ are positive constants depending only on $a$, $\gamma$, $\inf\limits_{\overline{\Omega}}\psi$ and $\sup\limits_{\overline{\Omega}}\psi$. In addition, if $\psi\in W^{2,q}$ for some $q\in(3,6)$, then
\be\la{rhos2}
\|\rho_s\|_{W^{2,q}}\leq C,
\ee
where C is a positive constant depending only on $a$, $\gamma$, $\inf\limits_{\overline{\Omega}}\psi$ and $\|\psi\|_{W^{2,q}}$.
\end{lemma}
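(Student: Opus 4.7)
The plan is to reduce the stationary system to an explicit algebraic relation, determine the free constant from the mass constraint, and then read off the bounds and regularity from the resulting formula.

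First, since $u_s\equiv 0$, the continuity equation is trivially satisfied and the boundary conditions on $u_s$ hold automatically, so \eqref{NSsdv} collapses to the single identity
\be
\nabla P(\rho_s)=\rho_s\nabla\psi\quad\text{in }\Omega.
\ee
With $P(\rho)=a\rho^\gamma$ this rewrites as $\nabla\!\left(\frac{a\gamma}{\gamma-1}\rho_s^{\gamma-1}-\psi\right)=0$ on $\{\rho_s>0\}$, so on each connected component there is a constant $C$ such that
\be\la{formula}
\rho_s=\left(\frac{\gamma-1}{a\gamma}(\psi+C)\right)^{1/(\gamma-1)}_{+}.
\ee
Since $\Omega$ is connected, the constant $C$ is global, and the mass constraint $\int\rho_s\,dx=\int\rho_0\,dx$ should determine it uniquely.

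Next I would set $C=-\inf_{\overline\Omega}\psi+\delta$ with $\delta\ge 0$ and define
$F(\delta):=\int\bigl(\frac{\gamma-1}{a\gamma}(\psi-\inf\psi+\delta)\bigr)^{1/(\gamma-1)}\,dx$.
Then $F$ is continuous and strictly increasing in $\delta$, with $F(0)<\int\rho_0\,dx$ by the hypothesis \eqref{Fc1} and $F(\delta)\to\infty$ as $\delta\to\infty$. The intermediate value theorem therefore produces a unique $\delta>0$ with $F(\delta)=\int\rho_0\,dx$, yielding a unique $C$ and hence a unique $\rho_s$ via \eqref{formula}. Because $\delta>0$ one has $\psi+C\ge\delta>0$ on $\overline\Omega$, so the positive-part truncation is inactive, $\rho_s$ is smoothly bounded away from vacuum, and the pointwise bounds \eqref{rhos1} read off directly by plugging $\inf_{\overline\Omega}\psi$ and $\sup_{\overline\Omega}\psi$ into \eqref{formula}, with $\underline\rho$ and $\bar\rho$ depending only on $a,\gamma,\inf\psi,\sup\psi$.

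For the regularity statements, write $\rho_s=\Phi(\psi+C)$ where $\Phi(s)=\bigl(\frac{\gamma-1}{a\gamma}s\bigr)^{1/(\gamma-1)}$ is $C^\infty$ on a neighborhood of the range $[\delta,\sup\psi-\inf\psi+\delta]$. The chain rule gives
\be
\nabla^2\rho_s=\Phi''(\psi+C)\,\nabla\psi\otimes\nabla\psi+\Phi'(\psi+C)\,\nabla^2\psi.
\ee
When $\psi\in H^2$, the Sobolev embedding $H^2\hookrightarrow W^{1,6}$ in $3$D makes $|\nabla\psi|^2\in L^3\subset L^2$, so $\rho_s\in H^2$. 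When $\psi\in W^{2,q}$ for some $q\in(3,6)$, the embedding $W^{2,q}\hookrightarrow W^{1,\infty}$ makes $|\nabla\psi|^2\in L^q$, and \eqref{rhos2} follows with a constant depending only on the listed quantities.

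The only genuinely delicate point is the solvability step: one must verify that hypothesis \eqref{Fc1} is precisely the sharp threshold ensuring $F(0)<\int\rho_0\,dx$, so that the minimal admissible $\delta$ is strictly positive and the solution is automatically vacuum-free. Everything else is either explicit algebra from \eqref{formula} or a standard composition estimate in Sobolev spaces. Since the statement is attributed to \cite{M0}, I would invoke this reference for the full argument and merely sketch the derivation above.
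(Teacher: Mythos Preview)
Your sketch is correct and follows the standard route to this result: reduce to $\nabla P(\rho_s)=\rho_s\nabla\psi$, integrate to the explicit formula \eqref{formula}, fix the constant by the mass constraint via an intermediate-value argument keyed to hypothesis \eqref{Fc1}, and then read off positivity, pointwise bounds, and Sobolev regularity by composition. The paper itself does not prove this lemma at all; it is simply quoted from \cite{M0} (Matsumura--Padula), with the subsequent remark noting that the system collapses to \eqref{NSs}. So there is no ``paper's own proof'' to compare against, and your argument is precisely the one the cited reference carries out.

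One small point worth tightening if you write this up in full: the statement asserts uniqueness of the pair $(\rho_s,0)$ among solutions of \eqref{NSsdv}, and also implicitly that the $\rho_s$ produced is vacuum-free. Your argument establishes existence and uniqueness among strictly positive $\rho_s$ with $u_s=0$, but you should say a word about why no solution with a vacuum region can satisfy the mass constraint under \eqref{Fc1} (this is exactly where the strict inequality in \eqref{Fc1} enters, as you note). Also, the constants $\underline\rho,\bar\rho$ in fact depend on the mass $\int\rho_0\,dx$ through $\delta$; the lemma's stated dependence list is slightly loose, but this is the paper's phrasing, not a flaw in your reasoning.
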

\begin{remark}\label{rNns}
It should be noted that the equilibrium density $\rho_s$ is in fact a solution of the rest state equations
 \be\la{NSs}
\begin{cases} \nabla P(\rho_s)= \rho_s \nabla \psi\ \ \text{         in } \Omega,\\
 \int\rho_s dx=\int\rho_0 dx.
\end{cases}
\ee
\end{remark}
We denote the initial total energy of (\ref{a1}) as
\be \la{c0}
C_0 \triangleq\int_{\Omega}\left(\frac{1}{2}\n_0|u_0|^2 + G(\rho_0,\rho_s) \right)dx,
\ee
where $G(\rho,\rho_s)$ is the potential energy density given by
\be\la{bz9}
G(\rho,\rho_s)\triangleq \rho\int_{\rho_s}^{\rho}\frac{P(\xi)-P(\rho_s)}{\xi^{2}} d\xi.
\ee

Our first result is concern with the global existence of a strong solution of the problem $\eqref{a1}$-$\eqref{ch1}$ in a bounded domain.
\begin{theorem}\la{th3}
Let $\Omega$ be a simply connected bounded domain in $\r^3$ and its smooth boundary $\partial\Omega$ has a finite number of 2-dimensional connected components. For $\psi\in H^2$ with \eqref{Fc1}, $\rho_s$ is the steady state density given by \eqref{NSs}.  For given positive constants $M $ and $\hat{\rho}>\bar\rho+1,$ assume that
 $(\n_0,u_0)$ satisfy for some $q\in (3,6)$,
\be \la{dt1}   (\rho_0 ,P(\rho_0) )  \in  W^{1,q}, \quad  u_0\in \left.\left\{f\in
H^1\right|  f\cdot n=0, ~~ {\rm curl}f\times n=0 ~\mbox{ \rm on } ~\partial \Omega\right\} , \ee
\be\la{dt2} 0\leq\rho_0\leq\hat{\rho},~~\|u_0\|_{H^1}\leq M. \ee
Then there exists a positive constant $\ve$ depending only on  $\mu$, $\lambda$, $\ga$, $a$,   $\inf\limits_{\overline{\Omega}}\psi$, $\|\psi\|_{H^2}$, $\hat{\rho}$,  $\Omega,$ and $M$ such that if the initial total energy $C_0<\ve$, then the initial-boundary-value problem  \eqref{a1}-\eqref{ch1} has a unique strong solution $(\n,u)$ in $\Omega\times(0,\infty)$ satisfying
\be\la{dt5}
  0\le\n(x,t)\le 2\hat{\n},\quad  (x,t)\in \O\times(0,\infty),
\ee
and for any $\tau\in(0,T)$,
\be\la{1dt6}\begin{cases}
 (\rho ,P )\in C([0,T];W^{1,q} ),\,\,\, \rho_t\in L^\infty(0,T;L^2),\\ \rho u\in C([0,T];L^2),\,\,\, u\in L^\infty(0,T;H^1 )\cap  L^2(0,T; H^2),\\
\rho^{1/2}u_t\in L^2 (0,T; L^2),\,\,\, (\rho^{1/2}u_t,\nabla^2 u)\in L^\infty(\tau,T; L^2)\\   u_t\in L^2(\tau,T;H^1).
\end{cases}\ee
\end{theorem}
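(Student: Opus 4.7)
The plan is to establish the theorem by combining a local existence theorem for strong solutions (obtained by approximating $\rho_0$ by $\rho_0+\delta$ and passing to the limit, as in Cho-Choe-Kim \cite{K1, K3, K2}) with a set of a priori estimates that remain uniform on $[0,T]$ for every $T>0$, then closing these estimates by a continuity argument in $T$ under the smallness assumption $C_0<\varepsilon$. The natural reformulation is to subtract the steady state: writing $\nabla P(\n)-\n\nabla\psi=\nabla(P(\n)-P(\rs))-(\n-\rs)\nabla\psi$, using Remark \ref{rNns}, so the momentum equation becomes a perturbation of the homogeneous barotropic Navier-Stokes system around $(\rs,0)$. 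The modulated potential energy $G(\n,\rs)$ in \eqref{bz9} is precisely the quantity for which one obtains a clean basic energy identity.

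First I would derive the basic energy estimate: testing the momentum equation by $u$ and combining with the mass equation yields
\be\label{en1}
\frac{d}{dt}\int\left(\tfrac12\n|u|^2+G(\n,\rs)\right)dx+\mu\int|\na u|^2dx+(\mu+\la)\int(\div u)^2dx=0,
\ee
after handling the boundary term via $u\cdot n=0$ and $\curl u\times n=0$. Next I would derive first-order estimates involving $\int\n|\dot u|^2dx$ and $\|\na u\|_{L^2}$, by testing the momentum equation successively by $u_t$ and by $\dot u$, and $A_1(T),A_2(T)$ in the spirit of Huang-Li-Xin \cite{hlx1}, adapted to the bounded-domain slip-BC setting following Cai-Li \cite{CL1}. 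In this step the effective viscous flux $F=(2\mu+\la)\div u-(P(\n)-P(\rs))-\n\psi+\rs\psi$ or a suitable variant plays a central role; under slip boundary conditions one obtains $F|_{\p\O}$ estimates through the Neumann problem $\Delta F=\div(\n\dot u)$ with compatible boundary data, exploiting the identity $\curl u\times n=0$ together with the curvature term in \eqref{Navi1}.

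The density $L^\infty$ bound \eqref{dt5} is then obtained by applying Zlotnik's inequality to the transport equation satisfied by $\log(\n/\rs)$ along particle paths, using the effective-viscous-flux structure to show that the forcing term is small whenever $\int_0^t\|\na u\|_{L^2}^4ds$ (or analogous higher quantity) is small; the smallness of $C_0$ controls the lower-order part, while the large-oscillation part $\|\rs\psi\|_{L^\infty}$ is absorbed because $\rs$ is a true steady state of the pressure gradient. With \eqref{dt5} in hand, higher-order estimates on $\|\na u\|_{H^1}$, $\|\n^{1/2}\dot u\|_{L^2}$, and $\|u_t\|_{H^1}$ proceed by differentiating the momentum equation in $t$, testing by $\dot u$ or $u_t$, and using the slip boundary conditions to integrate by parts without boundary contributions involving $\na u_t$; the Gronwall-type argument is closed on the small-time interval and then continued by the a priori bounds. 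The $W^{1,q}$ estimate for $\na\n$ and $\na P(\n)$ follows from applying $\na$ to the mass equation, using $\|\na u\|_{L^\infty}\lesssim 1+\|\div u\|_{L^\infty}\log(e+\|\na^2u\|_{L^q})+\|\curl u\|_{L^\infty}\log(\cdots)$ (a Beale-Kato-Majda type inequality with slip BC, as in \cite{CL1}), together with bounds on $F$ and $\curl u$.

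The main obstacle I expect is the density bound via Zlotnik together with the effective viscous flux on a bounded domain with slip boundary conditions and a \emph{large} potential force: one must show that the interaction term $\rs\psi-\n\psi$ inside $F$ does not spoil the smallness of the forcing along characteristics, since $\|\psi\|_{H^2}$ and hence $\bar\n-\underline\n$ are of order one rather than $\varepsilon$. The remedy is to work not with $\n$ but with $\n-\rs$ (or $\log(\n/\rs)$), so that only the \emph{deviation} from the steady state appears in the leading force term, which is indeed controlled by $C_0^{1/2}$ through \eqref{en1}. Once these uniform bounds hold on $[0,T]$ for the continuation parameter $T$, a standard open/closed argument shows $T=\infty$, and the regularity class \eqref{1dt6} follows from the local theory plus the a priori bounds, completing the proof.
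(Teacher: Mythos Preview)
Your overall architecture---local existence, bootstrap on $(A_1,A_2,A_3,\sup\rho)$, Zlotnik for the density bound, higher-order estimates, continuation---matches the paper. However there is a genuine gap that would prevent the argument from closing.

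The basic energy identity \eqref{en1} gives you only
\[
\sup_{0\le t\le T}\|\rho-\rho_s\|_{L^2}^2+\int_0^T\|\na u\|_{L^2}^2\,dt\le CC_0,
\]
i.e.\ a \emph{sup-in-time} bound on $\|\rho-\rho_s\|_{L^2}$ but no dissipation of it. Yet the bootstrap on $A_1+A_2$ and the Zlotnik argument on $[\sigma(T),T]$ both require a \emph{time-integrated} bound
\[
\int_0^T\|\rho-\rho_s\|_{L^2}^2\,dt\le CC_0
\]
uniform in $T$: it enters when you estimate $\int_0^T\sigma^3\|\na u\|_{L^4}^4\,dt$ (the $\|\rho-\rho_s\|_{L^4}^4$ piece), $\int_0^{\sigma(T)}\sigma\|\rho-\rho_s\|_{L^3}^3\,dt$, and $\int_{t_1}^{t_2}\|F\|_{L^\infty}^{8/3}\,dt$ in the large-time part of Zlotnik. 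Your remark that ``only the deviation from the steady state appears \ldots controlled by $C_0^{1/2}$ through \eqref{en1}'' is exactly where this fails: the sup bound alone would produce a factor of $T$ on the right-hand side, destroying the uniform-in-$T$ estimates.

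The paper obtains this missing dissipation by a separate argument (its Lemma~\ref{rho0}): one tests the rewritten momentum equation \eqref{uoq1}, which uses the algebraic identity \eqref{s3.60} to absorb the large force into a gradient plus a term quadratic in $G(\rho,\rho_s)$, against the Bogovski\u{\i} lift $\mathcal{B}[\rho-\rho_s]$. This produces $\int\rho_s^{-1}(P-P(\rho_s))(\rho-\rho_s)\,dx\gtrsim\|\rho-\rho_s\|_{L^2}^2$ on the left, while all remaining terms are either time derivatives of bounded quantities or controlled by $\|\na u\|_{L^2}^2$ and $C_0^{1/6}\|\rho-\rho_s\|_{L^2}^2$, the latter being absorbed for $C_0$ small. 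You should insert this step; without it neither $A_1+A_2\le C_0^{1/2}$ nor the density bound for $t\ge 1$ can be closed.

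Two smaller remarks. First, under the slip condition the dissipation in the basic energy is $(2\mu+\lambda)\|\div u\|_{L^2}^2+\mu\|\curl u\|_{L^2}^2$, not $\mu\|\na u\|_{L^2}^2+(\mu+\lambda)\|\div u\|_{L^2}^2$; these are equivalent via the div-curl estimate (Lemma~\ref{crle1}), but the curl form is what integration by parts actually gives here. Second, the paper's effective viscous flux is simply $F=(2\mu+\lambda)\div u-(P-P(\rho_s))$; your proposed variant with the extra $-(\rho-\rho_s)\psi$ does not yield a clean Neumann problem for $F$, and the paper applies Zlotnik directly to $\rho$ (with $g(\rho)=-\rho(P(\rho)-P(\rho_s))/(2\mu+\lambda)$) rather than to $\log(\rho/\rho_s)$, which avoids having to control $u\cdot\na\log\rho_s$ in $L^\infty$ along trajectories.
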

\begin{remark}\la{psic1}
In this conclusion, we have no restrictions on the potential force $\psi$ expect for \eqref{Fc1}. Moreover, besides the small total energy, large oscillations of the density and vacuum are also allowed.
\end{remark}
\begin{remark}\la{nauc1}
It is clear that $u\in C([\tau,T]; H^1)$ for any $0<\tau<T$. However, although $u_0\in H^1$, it seems difficult to derive that $u\in C([0,T]; H^1)$  due to the lack of the compatibility condition (see \eqref{dt3}) and the presence of vacuum.
\end{remark}
%%%%%%%%%%%%%%%%%%%%%%%%%%%%%%%%%%%%%
 The second goal of this paper is to provide the global existence of classical solutions of $\eqref{a1}$-$\eqref{ch1}$ in a bounded domain  as follows:
%ÒÔÏÂÊÇÍƵ¼ÕýÎÄÄÚÈÝ
%%%%%%%%%%%%%%%%%%%%%%%%%%%%%%%%%%%%%%%%%%%%%%%%%%%%%%%%%%%%%%%%%%%%%%%%%%%%%%%%%%%%%%%%%%%%%%%%%%%%%%%%%%%

 % assume $\Omega$ is always a simply connected bounded domain in $\r^3$ and its smooth boundary $\partial\Omega$ only has a finite number of 2-dimensional connected components.

%We consider first
%the two-dimensional case, that is, $\Omega\subset\r^{2}.  $ Without loss of generality, assume that the initial density $\n_0$ satisfies
%\be\la{oy3.7} \int_{\Omega} \n_0dx=1,\ee  which implies that there exists a positive constant $N_0$ such that  \be\la{oy3.8} \int_{B_{N_0}}  \n_0  dx\ge %\frac12\int\n_0dx=\frac12.\ee

%Now we can  state our main result, Theorem \ref{th1}, concerning existence and large-time behavior of global classical solutions to the problem  \eqref{a1}-\eqref{ch1}.
\begin{theorem}\la{th1}  In addition to the conditions of Theorem \ref{th3}, assume further that $\psi\in H^3$, the initial data $(\n_0,u_0)$ satisfy
\be \la{1dt1}   (\rho_0 ,P(\rho_0) )  \in  W^{2,q}, \ee
and the compatibility condition
\be\la{dt3}
-\mu\triangle u_0-(\mu+\lambda)\nabla\div u_0 + \nabla P(\rho_0) = \rho_0^{1/2}g,
\ee
for some  $ g\in L^2.$
Then there exists a positive constant $\ve$ depending only on   $\mu$, $\lambda$, $\ga$, $a$, $\inf\limits_{\overline{\Omega}}\psi$, $\|\psi\|_{H^2}$, $\hat{\rho}$,  $\Omega,$ and $M$ such that
the initial-boundary-value problem  \eqref{a1}-\eqref{ch1} has a unique classical solution $(\n,u)$ in $\Omega\times(0,\infty)$ satisfying \eqref{dt5} and for any $0<\tau<T<\infty$,
\be\la{dt6}\begin{cases}
 (\rho ,P )\in C([0,T];W^{2,q} ),\\  \na u\in C([0,T];H^1 )\cap  L^\infty(\tau,T; W^{2,q}),\\
u_t\in L^{\infty} (\tau,T; H^2)\cap H^1 (\tau,T; H^1),\\   \sqrt{\n}u_t\in L^\infty(0,\infty;L^2) ,
\end{cases}\ee
provided $C_0\le\ve.$

Moreover, there exist positive constants $C$ and $\tilde{C}$ depending only  on $\mu,$  $\lambda,$  $\gamma,$ $a$,  $\inf\limits_{\overline{\Omega}}\psi$, $\|\psi\|_{H^2}$, $\hat{\rho}$, $M$, $\Omega$, $p$, $q$ and $C_0$ such that the following large-time behavior holds for any $q\in [1,\infty)$ and $p\in [1,6],$
\be  \la{qa1w} \left(\|\rho-\rho_s\|_{L^q}+\|  u\|_{W^{1,p}}+\|\sqrt{\rho}\dot{u}\|^2_{L^2}\right)\leq Ce^{-\tilde{C}t}.\ee
\end{theorem}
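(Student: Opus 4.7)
The plan is to build on Theorem~\ref{th3}, upgrading the strong solution to a classical one by propagating the additional regularity coming from $\psi\in H^3$, \eqref{1dt1}, and the compatibility condition \eqref{dt3}, and then to derive the exponential decay by exploiting the rest-state identity \eqref{NSs}. All new work takes the form of a priori estimates that are uniform in $T$; a standard local existence theorem of Cho--Choe--Kim type together with the continuation principle will close the first part.

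For the classical-solution estimates I would first use \eqref{dt3} to give meaning to $\sqrt{\rho_0}u_t(0)$ in $L^2$; differentiating the momentum equation in $t$, testing against $u_t$, and re-using the bounds already established for Theorem~\ref{th3}, a Gronwall argument then produces
\begin{equation*}
\sup_{0\le t\le T}\|\sqrt{\rho}\dot u\|_{L^2}^2 + \int_0^T\|\nabla\dot u\|_{L^2}^2\,dt \le C
\end{equation*}
uniformly in $T$. Rewriting the momentum equation as a Lam\'e elliptic system for $u$ with slip boundary data \eqref{ch1},
\begin{equation*}
\mu\Delta u + (\mu+\lambda)\nabla\operatorname{div}u = \rho\dot u + \nabla P(\rho) - \rho\nabla\psi,
\end{equation*}
standard $W^{2,q}$ estimates under \eqref{ch1} then control $\|\nabla^2 u\|_{L^q}$ by the right-hand side. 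To propagate $W^{2,q}$ regularity of $\rho$ and $P(\rho)$ I would differentiate the continuity equation, work in $L^q$, and close by Gronwall after bounding $\int_0^T\|\nabla u\|_{L^\infty}\,dt$ via a Beale--Kato--Majda type logarithmic interpolation; the pieces $\|\operatorname{div} u\|_{L^\infty}$ and $\|\operatorname{curl} u\|_{L^\infty}$ needed there come from the effective viscous flux, whose analysis is well-behaved under \eqref{ch1} because $\operatorname{curl} u\times n=0$ on $\partial\Omega$.

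The main obstacle is the exponential decay \eqref{qa1w}, which vacuum makes delicate since $\sqrt{\rho}\,u$ does not control $u$ in $L^2$. The key structural observation is that the rest-state identity $\nabla P(\rho_s)=\rho_s\nabla\psi$ from \eqref{NSs} causes the external-force term to merge with the pressure term in the energy identity, producing
\begin{equation*}
\frac{d}{dt}\int\!\Bigl(\tfrac12\rho|u|^2+G(\rho,\rho_s)\Bigr)dx + \mu\!\int\!|\nabla u|^2 dx + (\mu+\lambda)\!\int(\operatorname{div}u)^2 dx = 0.
\end{equation*}
Coercivity of the dissipation would be obtained through the effective viscous flux: a Bogovskii-type construction combined with the uniform positivity \eqref{rhos1} of $\rho_s$ bounds $\|\rho-\rho_s\|_{L^2}$ (and hence $\int G(\rho,\rho_s)\,dx$) by $\|\nabla u\|_{L^2}$ up to exponentially small remainders, yielding a Gronwall inequality $\mathcal E'(t)+\tilde C\,\mathcal E(t)\le 0$ for $\mathcal E(t)=\int(\tfrac12\rho|u|^2+G(\rho,\rho_s))dx$. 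Interpolating the resulting decay of $\mathcal E$ with the uniform $L^\infty$ bound \eqref{dt5} on $\rho$ and with \eqref{dt6} then gives the $L^q$ decay of $\rho-\rho_s$ and the $W^{1,p}$ decay of $u$. Finally, testing the time-differentiated momentum equation against $e^{\tilde C t}u_t$ and absorbing terms via the basic decay of $\mathcal E$ upgrades this to the exponential decay of $\|\sqrt{\rho}\dot u\|_{L^2}^2$, completing \eqref{qa1w}.
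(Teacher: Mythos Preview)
Your plan matches the paper's route: the classical-solution upgrade via the compatibility condition, Lam\'e elliptic estimates, and a Beale--Kato--Majda logarithmic interpolation is the content of Lemmas~\ref{xle1}--\ref{xle5}, and the decay scheme---energy identity plus a Bogovskii correction absorbed into a modified functional $W(t)$ satisfying $W'+\delta_1 W\le0$---is exactly how Section~\ref{se5} establishes \eqref{qa1w}. (A minor point: under \eqref{ch1} the dissipation in the energy identity reads $(\lambda+2\mu)\|\div u\|_{L^2}^2+\mu\|\curl u\|_{L^2}^2$, obtained via $-\Delta u=-\nabla\div u+\nabla\times\curl u$; your Dirichlet-style form picks up a boundary term, though by \eqref{tdu1} the two are equivalent.)

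One step needs adjustment. You claim that interpolating the decay of $\mathcal E=\int(\tfrac12\rho|u|^2+G)\,dx$ with the uniform bounds \eqref{dt5} and \eqref{dt6} yields the $W^{1,p}$ decay of $u$; but, as you yourself flag, vacuum prevents $\|\sqrt\rho\,u\|_{L^2}$ from controlling $u$ in any Lebesgue norm, and \eqref{dt6} supplies only time-uniform bounds, not decay---so this interpolation gives nothing for $u$. The paper inserts an intermediate stage: the modified-energy inequality first yields $\int_0^\infty e^{\delta t}\|\nabla u\|_{L^2}^2\,dt<\infty$, and then multiplying the $\|\nabla u\|_{L^2}^2$-level differential inequality (namely \eqref{I4} with $m=0$, obtained by testing the momentum equation with $\dot u$) by $e^{\delta t}$ produces pointwise exponential decay of $\|\nabla u\|_{L^2}^2$ together with $\int_0^\infty e^{\delta t}\|\sqrt\rho\,\dot u\|_{L^2}^2\,dt<\infty$. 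Only after this do Poincar\'e and \eqref{h18} give the $W^{1,p}$ decay of $u$, and a further pass through the $\dot u$-level inequality \eqref{ax401} with the same exponential weight delivers the pointwise decay of $\|\sqrt\rho\,\dot u\|_{L^2}^2$. Your final ``test against $e^{\tilde C t}u_t$'' is the right idea; it simply needs to be run in two stages, with the first (at the $\nabla u$ level) preceding your interpolation step.
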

\begin{remark}\la{u0A}
Similar to what have done in \cite{CL1}, one can get the same conclusion under more relaxed assumption on the initial data and more wide boundary condition (see \cite[Theorem 1.1]{CL1} for the details).
\end{remark}
%%%%%%%%%%%%%%%%%%%%%%%%%%%
%ÒÔÏÂע˵Ã÷ÁËȷʵÊǾ­µä½âµÄÀíÓÉ¡£
%%%%%%%%%%%%%%%%%%%%%%%%%%%
%\begin{remark} Since $q>3,$ it follows from Sobolev's inequality and \eqref{dt6}$_1$  that \be\la{soh1}  \n,\na \n \in C(\bar\Omega\times [0,T]).\ee
%Moreover, it also follows from \eqref{dt6}$_2$ and \eqref{dt6}$_3$ that  \be \la{soh2} u,\na u, \na^2 u, u_t \in C(\bar\Omega\times [\tau,T]),\ee due to the following simple fact that $$L^2(\tau,T;H^1)\cap H^1(\tau,T;H^{-1})\hookrightarrow C([\tau,T];L^2).$$
%Finally, by \eqref{a1}$_1,$ we have \be \n_t=-u\cdot\na \n-\n\div u\in C(\bar\Omega\times [\tau,T]),\ee which together with \eqref{soh1} and \eqref{soh2} shows that the solution obtained by Theorem \ref{th1} is classical one.
%\end{remark}\cite{CL1}

 With \eqref{qa1w} at hand, similar to \cite[Theorem 1.2]{CL1},
we can obtain   the following large-time behavior of the gradient of the
density when vacuum states appear initially.
\begin{theorem}\la{th2}
In addition to the conditions of Theorem \ref{th1}, assume further that
there exists some point $x_0\in \Omega$ such that $\n_0(x_0)=0.$  Then %the unique global classical solution $(\n,u)$ to the   problem  \eqref{a1}-\eqref{ch1} obtained in Theorem \ref{th1}  has to blow up as $t\rightarrow \infty$ in the sense that for any $r>3,$ more precisely,
there exist positive constants $\hat{C}_1$ and $\hat{C}_2$ depending only  on $\mu$, $\lambda$, $\ga$, $a$, $\inf\limits_{\overline{\Omega}}\psi$, $\|\psi\|_{H^2}$, $\hat{\rho}$,  $\Omega$, $M$, $r$ and $C_0$ such that for any $t>0$,
\be\la{qa2w}\ba \|\na\n (\cdot,t)\|_{L^r}\geq \hat{C}_1 e^{\hat{C}_2 t} . \ea\ee
\end{theorem}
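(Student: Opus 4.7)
\textbf{Proof strategy for Theorem \ref{th2}.}

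The basic mechanism is a dichotomy: along the particle trajectory emanating from $x_0$ the density stays identically zero, yet \eqref{qa1w} forces $\rho$ to be close in every $L^q$ to the strictly positive steady state $\rho_s\geq\underline{\rho}$. A Gagliardo--Nirenberg interpolation will convert this tension into an exponentially growing lower bound for $\|\nabla\rho\|_{L^r}$.

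First I would propagate the vacuum. Let $X(t;y)$ be the flow map defined by $\partial_t X=u(X,t)$, $X(0;y)=y$. Writing $\eqref{a1}_1$ as $\dot\rho+\rho\,\div u=0$ along characteristics yields
\begin{equation*}
\rho(X(t;x_0),t)=\rho_0(x_0)\exp\!\left(-\int_0^t \div u(X(s;x_0),s)\,ds\right)=0.
\end{equation*}
The regularity of $u$ supplied by \eqref{dt6} (in particular $\nabla u\in L^\infty_{\rm loc}([0,\infty);W^{2,q})$ with $q>3$) makes $\div u$ bounded on any $[0,t]\times\overline\Omega$, so the identity is rigorous. Since $\rho_s(X(t;x_0))\geq\underline{\rho}>0$ by \eqref{rhos1}, at every time $t$,
\begin{equation*}
\|\rho-\rho_s\|_{L^\infty(\Omega)}\geq |(\rho-\rho_s)(X(t;x_0),t)|=\rho_s(X(t;x_0))\geq\underline{\rho}.
\end{equation*}

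Next I would apply the interpolation
\begin{equation*}
\|f\|_{L^\infty(\Omega)}\leq C\,\|f\|_{L^q(\Omega)}^{1-\theta}\|\nabla f\|_{L^r(\Omega)}^{\theta}+C\,\|f\|_{L^q(\Omega)},
\end{equation*}
valid for fixed $r>3$ and $q$ sufficiently large with some $\theta=\theta(q,r)\in(0,1)$. Taking $f=\rho-\rho_s$ and invoking the decay $\|\rho-\rho_s\|_{L^q}\leq Ce^{-\tilde{C}t}$ from Theorem \ref{th1}, one obtains, for $t\geq t_\ast$ chosen so large that $C\|\rho-\rho_s\|_{L^q}\leq\underline{\rho}/2$,
\begin{equation*}
\frac{\underline{\rho}}{2}\leq C\,(Ce^{-\tilde{C}t})^{1-\theta}\|\nabla(\rho-\rho_s)\|_{L^r}^{\theta},
\end{equation*}
which rearranges to $\|\nabla(\rho-\rho_s)\|_{L^r}\geq C_1 e^{C_2 t}$ with $C_2=\tilde{C}(1-\theta)/\theta>0$. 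Since $\rho_s\in H^2\hookrightarrow W^{1,6}$ (and more if $\psi\in W^{2,q}$, per Lemma \ref{lm1.0}), $\|\nabla\rho_s\|_{L^r}$ is bounded, so
\begin{equation*}
\|\nabla\rho(\cdot,t)\|_{L^r}\geq\|\nabla(\rho-\rho_s)\|_{L^r}-\|\nabla\rho_s\|_{L^r}\geq\tfrac{1}{2}C_1 e^{C_2 t}
\end{equation*}
for $t$ sufficiently large. On the bounded residual interval $[0,t_\ast]$, the same characteristic-oscillation argument combined with the continuity $\rho\in C([0,T];W^{1,q})$ from \eqref{1dt6} yields a uniform positive lower bound for $\|\nabla\rho(\cdot,t)\|_{L^r}$, which can be absorbed by a suitable choice of $\hat{C}_1$ to produce \eqref{qa2w} for all $t>0$.

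The main obstacle I anticipate is the careful verification of vacuum persistence along the flow under the precise regularity guaranteed by \eqref{dt6}; once this step is nailed down, the argument reduces to a direct application of Gagliardo--Nirenberg paired with the exponential $L^q$ decay furnished by Theorem \ref{th1}.
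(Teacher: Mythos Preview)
Your proposal is correct and is precisely the argument the paper has in mind: the paper does not write out its own proof but simply refers to \cite[Theorem~1.2]{CL1}, and the mechanism there is exactly vacuum propagation along the particle path from $x_0$ combined with the Gagliardo--Nirenberg inequality \eqref{g2} and the exponential $L^q$-decay \eqref{qa1w}. Regarding the regularity issue you flag for the flow map, note that \eqref{cxb3} already gives $\int_0^T\|\nabla u\|_{L^\infty}\,dt<\infty$ on every finite interval, which is exactly what is needed both to define the characteristics and to make the exponential formula $\rho(X(t;x_0),t)=\rho_0(x_0)\exp(-\int_0^t\div u\,ds)=0$ rigorous down to $t=0$.
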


%\begin{remark} Recently, Li established the global classical solutions of \eqref{a1}-\eqref{ch1} with $\psi=0$ under the same assumption of Theorem \ref{th1}.\end{remark}
%\begin{remark} It would be interesting to study the existence and large time asymptotic behavior of solutions for the no-slip boundary condition \eqref{nos1}. This is left for the future.\end{remark}

We now comment on the analysis of this paper. Compared with \cite{CL1}, as indicated by \cite{fe2,lm1,MY1,Wei1},  the large external forces will bring some serious difficulties  due to its   powerful influence on the dynamic motion of compressible flows.    To overcome these difficulties, we need some new ideas.   More precisely, firstly, introducing
\be \la{dt0}  \text{curl} u \triangleq \nabla\times u ,\quad F\triangleq(\lambda+2\mu)\,\div u-(P-P(\rho_s)),\ee  we rewrite $ (\ref{a1})_2 $ in the form
\be \la{hod1}\ba
\rho\dot{u}-\rho\nabla\psi=\nabla F - \mu\nabla\times\curl u ,
\ea \ee
where $F$  is called the effective viscous flux and plays an important role in our following analysis.
Combining \eqref{hod1} with the slip boundary condition \eqref{ch1}, we obtain the estimates of $\nabla F$ and $\nabla \curl u$. Furthermore, together with the following inequality
$$\|\nabla u\|_{W^{k,q}}\leq C(\|\div u\|_{W^{k,q}}+\|\curl u\|_{W^{k,q}})\,\,\,\text{for any} \,\,\,q>1,\,\,\,k\geq 0,$$
which is shown in \cite{vww} when $\Omega$ is simply connected, it allows us to control $\nabla u$ by means of $\div u$ and $\curl u$. Secondly, since $u\cdot n=0$ and $\curl u \times n=0$ on $\partial\Omega$, denote $u^{\perp}\triangleq -u\times n$, then $u=u^{\perp}\times n$, moreover,
\be\la{pzw1} u\cdot\nabla u\cdot n=-u\cdot\nabla n\cdot u,\ee
and \bnn(\dot{u}+(u\cdot\nabla n)\times u^{\perp})\cdot n=0 \mbox{ on } \pa\O  ,\enn
which are the key to estimating the integrals on the boundary $\partial\Omega$. Finally, to deal with the large external potential forces,  following \cite{hlx01} (see also \cite{lm1,LZZ1}), we have
\be \la{s3.60}\ba&
\rho_s^{-1}(\nabla (\rho^\gamma-\rho_s^\gamma)-\gamma(\rho-\rho_s)\rho_s^{\gamma-2}\nabla {\rho_s})\\&
 =\nabla(\rho_s^{-1}(\rho^\gamma-\rho_s^\gamma)) -\frac{\gamma-1}{a}G(\rho,\rho_s)\na\n_s^{-1},
 \ea
\ee which indeed  gives the 'small' estimate of $\|\rho-\rho_s\|_{L^2(\Omega\times [0,T])}$ (see \eqref{uoq1} and \eqref{e4}).

The rest of the paper is organized as follows.  In Section 2,  some  known facts and elementary inequalities needed in later analysis are collected. Sections 3 and 4 are devoted to deriving the necessary a priori estimates which can guarantee the local strong (or classical) solution to be a global strong (or classical) one. Finally, the main results, Theorems \ref{th1} and   \ref{th2} will be proved in Section 5.
\section{\la{se1} Preliminaries }
\subsection{Some known inequalities and facts}
In this subsection, we will recall  some known theorems and facts, which  are frequently utilized in our discussion.

First, similar to the proof of \cite[Theorems 1.2 and 1.4]{hxd1}, we have the local existence of strong and classical solutions.
\begin{lemma}\la{loc1} Let $\Omega$ be as in Theorem \ref{th1}, assume that $(\n_0,u_0)$ satisfy \eqref{dt1}. Then there exist a small time $T>0$ and a unique strong solution $(\n,u)$ to the problem \eqref{a1}-\eqref{ch1} on $\Omega\times(0,T]$ satisfying for any $ \tau\in(0,T),$
\be\nonumber\begin{cases}
 (\rho ,P )\in C([0,T];W^{1,q} ),\\ \na u\in L^\infty(0,T;L^2 )\cap\in L^\infty(\tau,T;  W^{1,q}),\\
\rho^{1/2}u_t\in L^2 (0,T; L^2)\cap L^\infty(\tau,T; L^2),\\ u_t\in L^\infty(\tau,T;H^1).
\end{cases}\ee
Furthermore, if the initial data $(\n_0,u_0)$ satisfy \eqref{1dt1} and the compatibility conditions \eqref{dt3}, then there exist $T>0$ and a unique classical solution $(\n,u)$ to the problem \eqref{a1}-\eqref{ch1} on $\Omega\times(0,T]$ satisfying for any $ \tau\in(0,T),$

\be\nonumber\begin{cases}
 (\rho ,P )\in C([0,T];W^{2,q} ),\\  \na u\in C([0,T];H^1 )\cap  L^\infty(\tau,T; W^{2,q}),\\
u_t\in L^{\infty} (\tau,T; H^2)\cap H^1 (\tau,T; H^1),\\   \sqrt{\n}u_t\in L^\infty(0,T;L^2) .
\end{cases}\ee \end{lemma}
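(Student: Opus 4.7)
The plan is to adapt the Cho--Choe--Kim-type scheme used by Huang \cite{hxd1}, with the main new wrinkle being that both the potential force $\nabla\psi$ and the slip boundary condition $u\cdot n=0,\ \curl u\times n=0$ must be carried through the linearization and the passage to the limit. Since $\rho_0$ may vanish, I would first regularize the initial density by setting $\rho_0^\delta\triangleq\rho_0+\delta$ for $\delta\in(0,1)$, solve the problem for $\rho_0^\delta$, and then take $\delta\to 0^+$ using estimates uniform in $\delta$. For fixed $\delta$, I would run a Picard iteration: given $u^{(k)}$, solve the linear transport equation $\rho^{(k+1)}_t+\div(\rho^{(k+1)}u^{(k)})=0$ with datum $\rho_0^\delta$ by the method of characteristics (preserving strict positivity and $W^{1,q}$ regularity), and then solve the linear Lam\'e system
\begin{equation*}
\rho^{(k+1)}u^{(k+1)}_t+\rho^{(k+1)}u^{(k)}\cdot\na u^{(k+1)}-\mu\Delta u^{(k+1)}-(\mu+\lambda)\na\div u^{(k+1)}=-\na P(\rho^{(k+1)})+\rho^{(k+1)}\na\psi,
\end{equation*}
subject to the slip conditions, using standard parabolic $L^p$-theory.

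The heart of the argument is to derive a priori estimates at each level of the iteration that depend only on the initial data through $\|(\rho_0,P(\rho_0))\|_{W^{1,q}}$, $\|u_0\|_{H^1}$ and $\hat\rho$. The basic energy estimate is immediate; to reach the $H^2$ regularity of $u$ stated in the lemma, I would test the momentum equation against $u_t$ and differentiate in time, which produces the bound on $\sqrt{\rho}\,u_t$ in $L^2(0,T;L^2)$ and, via elliptic regularity for the Lam\'e operator with slip boundary conditions combined with the Hodge-type inequality $\|\na u\|_{W^{k,q}}\le C(\|\div u\|_{W^{k,q}}+\|\curl u\|_{W^{k,q}})$ valid on simply connected domains \cite{vww}, the $L^\infty(\tau,T;W^{1,q})$ bound on $\na u$. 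The boundary identities $u\cdot\na u\cdot n=-u\cdot\na n\cdot u$ and $(\dot u+(u\cdot\na n)\times u^\perp)\cdot n=0$ on $\partial\O$ are essential when integrating by parts the inertial and viscous terms; the pressure term $\rho\nabla\psi$ is handled by $\|\rho\nabla\psi\|_{L^2}\le C\hat\rho\|\psi\|_{H^2}$, which is absorbed into the forcing. The $W^{1,q}$-bound on $\rho$ follows from differentiating the continuity equation and applying Gr\"onwall, which in turn needs $\int_0^T\|\na u\|_{L^\infty}dt<\infty$, recovered from $u\in L^2(0,T;W^{2,q})$ through Sobolev embedding.

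Once the iterates are shown to converge (a contraction in a weaker norm plus uniform bounds in the stronger one), one obtains a solution for each $\delta>0$, and the uniform-in-$\delta$ estimates allow the limit $\delta\to 0^+$, yielding the strong solution with possibly vanishing density; uniqueness is a standard Gr\"onwall argument on the difference of two solutions. For the second statement, the upgrade to a classical solution, I would differentiate the momentum equation once more in $t$ and test with $u_{tt}$; the compatibility condition \eqref{dt3} supplies a finite initial value for $\sqrt{\rho_0}\,u_t(\cdot,0)=\sqrt{\rho_0}\,g-\sqrt{\rho_0}\,u_0\cdot\na u_0$, which is exactly what is needed to propagate $\sqrt{\rho}\,u_t\in L^\infty(0,T;L^2)$ up to $t=0$, and from here $W^{2,q}$-transport estimates on $\rho$ and elliptic regularity promote $\na u$ into $L^\infty(\tau,T;W^{2,q})$. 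The main obstacle I anticipate is the uniformity of the $(\na u, \na\rho)$-estimates as $\delta\to 0^+$ without invoking the compatibility condition in the first part; this is precisely where the slip boundary conditions complicate the standard Cho--Kim scheme and where the Hodge decomposition together with the boundary identities above must be used carefully to replace the simpler Dirichlet boundary calculus available in \cite{hxd1}.
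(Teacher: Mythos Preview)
Your proposal is correct and essentially matches what the paper does: the paper does not give an independent proof of this lemma at all, but simply records it as a preliminary fact by writing ``similar to the proof of \cite[Theorems 1.2 and 1.4]{hxd1}, we have the local existence of strong and classical solutions.'' Your sketch is precisely an outline of the Cho--Choe--Kim/Huang scheme that \cite{hxd1} carries out, correctly adapted to the slip boundary condition via the div--curl estimate of \cite{vww} and the boundary identity $u\cdot\nabla u\cdot n=-u\cdot\nabla n\cdot u$, and with the potential force $\rho\nabla\psi$ treated as a bounded lower-order source. One small correction: from the momentum equation and the compatibility condition \eqref{dt3} you actually obtain $\sqrt{\rho_0}\,u_t(\cdot,0)=-g-\sqrt{\rho_0}\,u_0\cdot\nabla u_0+\sqrt{\rho_0}\,\nabla\psi$, not $\sqrt{\rho_0}\,g-\sqrt{\rho_0}\,u_0\cdot\nabla u_0$; this does not affect the argument since all three terms lie in $L^2$.
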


%  one can find the following results in \cite{ZW,ELC}. \begin{lemma}[Sobolev embedding theorem]\la{xl21} Let $\Omega$ be a bounded domain in $\r^n$ with  Lipschitz boundary. Assume $u\in W^{k,p}(\Omega)$, $ 1\leq p < \infty$. (1) If $k<\frac{n}{p}$, then $u\in L^q(\Omega)$ and $$\|u\|_{L^q(\Omega)}\leq C \|u\|_{W^{k,p}(\Omega)},$$ where $1\leq q\leq\frac{np}{n-kp}$ and the constant C depends only on $k$, $p$, $n$ and $\Omega$. (2) If $k>\frac{n}{p}$, then $u\in C^{k-[\frac{n}{p}]-1, \alpha}(\bar{\Omega})$ and $$\|u\|_{C^{k-[\frac{n}{p}]-1, \alpha}(\bar{\Omega})}\leq C \|u\|_{W^{k,p}(\Omega)},$$ where \bnn \alpha=\begin{cases} [\frac{n}{p}]+1-\frac{n}{p},~&\text{if} ~\frac{n}{p} ~\text{is not an integer},\\ \text{any positive number} <1, ~&\text{if}~ \frac{n}{p}~ \text{is an integer}, \end{cases}\enn and the constant C depends only on $k$, $p$, $n$ and $\Omega$. \end{lemma}

 Next,  the well-known Gagliardo-Nirenberg's inequality (see \cite{nir})
  will be used later.
\begin{lemma}
[Gagliardo-Nirenberg]\la{l1} Assume that $\Omega$ is a bounded Lipschitz domain in $\r^3$. For  $p\in [2,6],\,q\in(1,\infty), $ and
$ r\in  (3,\infty),$ there exist    generic
 constants
$C_i>0 (i=1,\cdots,4) $ which   depend only on $p$, $q$, $r$, and $\Omega$ such that for any  $f\in H^1({\O }) $
and $g\in  L^q(\O )\cap D^{1,r}(\O), $
\be\la{g1}\|f\|_{L^p(\O)}\le C_1 \|f\|_{L^2}^{\frac{6-p}{2p}}\|\na
f\|_{L^2}^{\frac{3p-6}{2p}}+C_2\|f\|_{L^2} ,\ee
\be\la{g2}\|g\|_{C\left(\ol{\O }\right)} \le C_3
\|g\|_{L^q}^{q(r-3)/(3r+q(r-3))}\|\na g\|_{L^r}^{3r/(3r+q(r-3))} + C_4\|g\|_{L^2}.
\ee
Moreover, if either $f\cdot n|_{\partial\Omega}=0 $ or $\bar{f}=0,$     we can choose $C_2=0.$ Similarly,   the constant $C_4=0 $ provided $ g\cdot n|_{\partial\Omega}=0$ or $\bar{g}=0$.
\end{lemma}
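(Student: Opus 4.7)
The plan is to reduce both \eqref{g1} and \eqref{g2} to the classical Gagliardo--Nirenberg inequalities on $\r^3$ by means of a bounded extension operator, and then to upgrade them to the $C_2=0$, $C_4=0$ versions via Poincar\'e-type inequalities on the appropriate subspaces.

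First I would invoke Stein's extension theorem for the bounded Lipschitz domain $\Omega$: there exists a continuous linear operator $E:W^{1,s}(\Omega)\to W^{1,s}(\r^3)$ for every $s\in[1,\infty)$, with $Ef=f$ a.e.\ in $\Omega$ and $\|Ef\|_{W^{1,s}(\r^3)}\le C(\Omega,s)\|f\|_{W^{1,s}(\Omega)}$. The classical $\r^3$ Gagliardo--Nirenberg inequality, obtained by combining the Sobolev embedding $\dot H^1(\r^3)\hookrightarrow L^6(\r^3)$ with $L^p$-interpolation, yields for $p\in[2,6]$
\[
\|Ef\|_{L^p(\r^3)}\le C\|Ef\|_{L^2(\r^3)}^{(6-p)/(2p)}\|\na Ef\|_{L^2(\r^3)}^{(3p-6)/(2p)}.
\]
Restricting to $\Omega$ and using the extension bound $\|\na Ef\|_{L^2(\r^3)}\le C(\|\na f\|_{L^2}+\|f\|_{L^2})$, together with the elementary estimate $(a+b)^\alpha\le 2^\alpha(a^\alpha+b^\alpha)$ for $\alpha=(3p-6)/(2p)\in[0,1]$, gives \eqref{g1}; note that the exponents $(6-p)/(2p)$ and $(3p-6)/(2p)$ sum to one so the cross-term $\|f\|_{L^2}^{(6-p)/(2p)}\|f\|_{L^2}^{(3p-6)/(2p)}$ collapses to $\|f\|_{L^2}$. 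The inequality \eqref{g2} follows by the exact same scheme, with the $L^p$ GN replaced by the Morrey-type interpolation $\|h\|_{L^\infty(\r^3)}\le C\|h\|_{L^q(\r^3)}^{\theta}\|\na h\|_{L^r(\r^3)}^{1-\theta}$ with $\theta=q(r-3)/(3r+q(r-3))$, valid for $r>3$ by Morrey's embedding and a scaling computation.

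For the improvements $C_2=0$ and $C_4=0$ it suffices, because the two exponents in each display sum to one, to establish the Poincar\'e-type bound $\|h\|_{L^2}\le C\|\na h\|_{L^2}$ on the relevant subspace and then substitute into $C_2\|f\|_{L^2}$ (resp.\ $C_4\|g\|_{L^2}$) to absorb the additive term into the multiplicative one. When $\bar h=0$ this is just the mean-value Poincar\'e inequality on a connected bounded Lipschitz domain. When $h\cdot n|_{\partial\Omega}=0$, I would argue by Rellich--Kondrachov and contradiction: assuming the inequality fails, extract $h_k\in H^1$ with $\|h_k\|_{L^2}=1$, $\|\na h_k\|_{L^2}\to 0$, $h_k\cdot n=0$ on $\partial\Omega$; along a subsequence $h_k\to h_\infty$ strongly in $L^2$, and $h_\infty$ is a constant vector with $h_\infty\cdot n\equiv0$ on $\partial\Omega$; since the bounded surface $\partial\Omega$ cannot lie in a plane perpendicular to a nonzero vector, $h_\infty=0$, contradicting $\|h_\infty\|_{L^2}=1$.

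Most of the scheme is routine; the mildly nontrivial ingredient is the Poincar\'e inequality for vector fields with vanishing normal trace, which is where the argument genuinely differs from the baseline case, and this is where I would spend the most care.
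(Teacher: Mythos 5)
The paper does not prove this lemma: it is stated as a citation to Nirenberg's 1959 paper, so there is no internal proof to compare against. Your extension-plus-interpolation-plus-Poincar\'e strategy is the standard and correct route, and the overall structure is sound; let me flag a few spots where the details deserve more care than you give them.

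For \eqref{g1} the scheme is clean: the Stein (or Jones) extension controls both $\|Ef\|_{L^2(\mathbb{R}^3)}$ and $\|\nabla Ef\|_{L^2(\mathbb{R}^3)}$, the two exponents sum to one, so after $(a+b)^{\alpha}\le 2^\alpha(a^\alpha+b^\alpha)$ the cross term collapses exactly to $\|f\|_{L^2}$. For \eqref{g2}, however, the phrase ``the exact same scheme'' hides a nontrivial step. The extension bound yields $\|\nabla Eg\|_{L^r(\mathbb{R}^3)}\le C(\|\nabla g\|_{L^r(\Omega)}+\|g\|_{L^r(\Omega)})$, and after distributing the power you are left with the cross term $\|g\|_{L^q}^{\theta}\|g\|_{L^r}^{1-\theta}$, not $\|g\|_{L^2}$. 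Getting from there to the stated $C_4\|g\|_{L^2}$ requires one or two extra Young-with-$\epsilon$ absorptions (e.g.\ interpolating $\|g\|_{L^r}\le\|g\|_{L^\infty}^{1-q/r}\|g\|_{L^q}^{q/r}$ and then again $\|g\|_{L^q}\le\|g\|_{L^\infty}^{1-2/q}\|g\|_{L^2}^{2/q}$ when $q>2$, using boundedness of $\Omega$ when $q\le 2$), and each absorption must land the $\|g\|_{L^\infty}$-contribution back on the left with a small coefficient. It works, but it is not literally the same computation and should be spelled out. You should also confirm $g\in L^r(\Omega)$ before invoking the $W^{1,r}$-extension: this follows from Morrey on $\Omega$ when $q<r$, but it needs a sentence. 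Finally, your closing remark that ``the bounded surface $\partial\Omega$ cannot lie in a plane perpendicular to a nonzero vector'' misstates the geometry: the condition $c\cdot n\equiv 0$ on $\partial\Omega$ says $c$ is everywhere tangent to the boundary, not that the boundary lies in a plane. The cleanest argument is to maximize $x\mapsto c\cdot x$ on the compact set $\overline\Omega$: the maximum cannot be interior (the gradient is $c$), and at a boundary maximum the gradient must be a non-negative multiple of the outward normal, which together with $c\cdot n=0$ forces $c=0$. With these points tightened the proof is complete.
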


We need the following Zlotnik's inequality, by which we can get the
uniform (in time) upper bound of the density $\n.$
\begin{lemma}[\cite{zl1}]\la{le1} Suppose the function $y$ satisfies
\bnn y'(t)= g(y)+b'(t) \mbox{  on  } [0,T] ,\quad y(0)=y^0, \enn
with $ g\in C(R)$ and $y, b\in W^{1,1}(0,T).$ If $g(\infty)=-\infty$
and \be\la{a100} b(t_2) -b(t_1) \le N_0 +N_1(t_2-t_1)\ee for all
$0\le t_1<t_2\le T$
  with some $N_0\ge 0$ and $N_1\ge 0,$ then
\bnn y(t)\le \max\left\{y^0,\overline{\zeta} \right\}+N_0<\infty
\mbox{ on
 } [0,T],
\enn
  where $\overline{\zeta} $ is a constant such
that \be\la{a101} g(\zeta)\le -N_1 \quad\mbox{ for }\quad \zeta\ge \overline{\zeta}.\ee
\end{lemma}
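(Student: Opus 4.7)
The plan is to prove the bound by a direct case analysis based on whether $y$ ever falls below the threshold $\overline{\zeta}$ on the interval $[0,t]$. The existence of $\overline{\zeta}$ with $g(\zeta)\le -N_1$ for all $\zeta\ge\overline{\zeta}$ is guaranteed by $g(\infty)=-\infty$ together with the continuity of $g$. Since $y\in W^{1,1}(0,T)$, the function $y$ is absolutely continuous on $[0,T]$, so the ODE holds a.e.\ and the fundamental theorem of calculus is available for $y$.

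Fix $t\in[0,T]$. I would split into two cases. In the first case, suppose there exists some $s\in[0,t]$ with $y(s)\le\overline{\zeta}$; let $t^*$ be the supremum of such $s$. Continuity gives $y(t^*)\le\overline{\zeta}$, and by maximality $y(s)>\overline{\zeta}$ for all $s\in(t^*,t]$ (or the interval is empty, in which case the conclusion is immediate). On $(t^*,t]$ the defining property of $\overline{\zeta}$ yields $g(y(s))\le -N_1$, so integrating the ODE $y'=g(y)+b'$ from $t^*$ to $t$ and invoking the hypothesis \eqref{a100} gives
\begin{equation*}
y(t)=y(t^*)+\int_{t^*}^{t}g(y(s))\,ds+\bigl(b(t)-b(t^*)\bigr)\le \overline{\zeta}-N_1(t-t^*)+N_0+N_1(t-t^*)=\overline{\zeta}+N_0.
\end{equation*}
In the second case, $y(s)>\overline{\zeta}$ throughout $[0,t]$; then in particular $y^0>\overline{\zeta}$ so $\max\{y^0,\overline{\zeta}\}=y^0$, and the same argument applied on $[0,t]$ in place of $[t^*,t]$ gives $y(t)\le y^0+N_0$. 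Combining the two cases yields $y(t)\le \max\{y^0,\overline{\zeta}\}+N_0$, which is the claimed bound.

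There is no real obstacle in this proof; the only mildly delicate points are (i) justifying the existence of the supremum $t^*$ and the pointwise inequality $y(t^*)\le\overline{\zeta}$, which follow from continuity of $y$, and (ii) ensuring that the integrated form of the ODE is legitimate, which is handled by the $W^{1,1}$ regularity. The argument is essentially an envelope/comparison principle: the sublinear forcing $b'$ (with mean rate at most $N_1$) cannot overcome the dissipation provided by $g$ once $y$ exceeds $\overline{\zeta}$, so excursions above $\overline{\zeta}$ are controlled by the slack $N_0$ in the bound on $b(t_2)-b(t_1)$.
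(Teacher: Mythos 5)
The paper quotes this lemma from Zlotnik \cite{zl1} without supplying a proof, so there is no in-paper argument to compare against. Your direct argument is correct and complete: taking the last time $t^*\le t$ at which $y(t^*)\le\overline{\zeta}$ (well-defined and attained because $y$ is continuous, being $W^{1,1}$), integrating the ODE over $(t^*,t]$ where $y>\overline{\zeta}$ forces $g(y)\le -N_1$, and invoking \eqref{a100} so that the dissipative contribution $-N_1(t-t^*)$ exactly cancels the $N_1(t-t^*)$ allowed growth of $b$, leaving only the slack $N_0$. The edge cases — $t^*=t$ (where $y(t)\le\overline{\zeta}$ directly), the second case where $y$ never dips to $\overline{\zeta}$ (so $y^0>\overline{\zeta}$ and one integrates from $0$), and $t=0$ — are all handled. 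This is essentially the standard first-exit comparison argument for Zlotnik-type bounds.
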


For the Lam\'{e}'s system
\be\la{cxtj1}\begin{cases}
-\mu\Delta u-(\lambda+\mu)\nabla\div u=f, \,\, &x\in\Omega, \\
u\cdot n=0,\,\curl u\times n=0,\,\,&x\in\partial\Omega,
\end{cases} \ee
where $u=(u^{1},u^{2},u^{3}),\,\,f=(f^{1},f^{2},f^{3})$, $\Omega$ is a bounded smooth domain in $\r^3,$ and $\mu,\lambda$ satisfy the condition \eqref{h3}, the following estimate is standard (see \cite{adn}).
%%%%%%%%%%%%%%%%%%%%%%%%%%%%%%%%%%%%%%%%%%%%%%%%%%%%%%%%%%%%%%%%%%%%%%%
%It follows from \cite{soln2} that the Lam\'{e}'s system is of Petrovsky type. In Petrovsky's systems, roughly speaking, different equations and unknowns have the same ``differentiability order", see \cite{soln1}. We also recall that Petrovsky's systems belong to an important subclass of Agmon-Douglis-Nirenberg (ADN) elliptic systems (see\cite{adn}), which has the same good properties of self-adjoint ADN systems. It follows from \cite{soln1} and \cite{soln2} that the solution $u$ to the system \eqref{cxtj1} can be represented as
%\be\la{cxtj2}\ba u^{i}(x)=\int G_{ij}(x,y)f^{j}(y)dy,\,\,x\in\Omega,\ea\ee
%where the Green function $G_{ij}=G_{ij}(x,y)\in C^{\infty}(\Omega\times\Omega\backslash\{x=y\})$ satisfies
%\be\la{cxtj3}\ba
%|\partial_{x}^{\alpha}\partial_{y}^{\beta}G_{ij}(x,y)|\leq C_{\alpha,\beta}|x-y|^{-1-|\alpha|-|\beta|},
%\ea\ee
%for $x\in\Omega\times\Omega\backslash\{x=y\}$ and every multi-indexes $\alpha=(\alpha_1,\alpha_2,\alpha_3)$ and $\beta=(\beta_1,\beta_2,\beta_3)$, $|\alpha|=\alpha_1+\alpha_2+\alpha_3$, $|\beta|=\beta_1+\beta_2+\beta_3$.%
%Moreover,
\begin{lemma}   \la{zhle}
Let $u$ be a solution of the Lam\'{e}'s equation \eqref{cxtj1}, there exists a positive constant $C$ depending only on $\lambda,\,\mu,\,q,\,\,k$ and $\Omega$ such that

(1) If $f\in W^{k,q}$ for some $q\in(1,\infty),\,\, k\geq0,$ then $u\in W^{k+2,q}$ and
$$\|u\|_{W^{k+2,q}}\leq C(\|f\|_{W^{k,q}}+\|u\|_{L^q}),$$

(2) If $f=\nabla g$ and $g\in W^{k,q}$ for some $q\geq1,\,\,k\geq0,$ then $u\in W^{k+1,q}$ and
$$\|u\|_{W^{k+1,q}}\leq C(\|g\|_{W^{k,q}}+\|u\|_{L^q}).$$
\end{lemma}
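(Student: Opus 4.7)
The claim is the standard Agmon--Douglis--Nirenberg (ADN) regularity estimate for the Lam\'e operator $L u := -\mu\Delta u - (\mu+\lambda)\nabla\div u$ paired with the slip boundary operators $B_1 u := u\cdot n$ and $B_2 u := \curl u \times n$, so my plan is to verify the ADN hypotheses and then derive part~(2) from part~(1). First, I would check proper ellipticity of $L$: the principal symbol $\widehat L(\xi) = \mu|\xi|^2 I_3 + (\mu+\lambda)\xi\otimes\xi$ satisfies $\widehat L(\xi) v\cdot v \ge \mu|\xi|^2|v|^2 > 0$ for $\xi\ne 0$, $v\ne 0$ under \eqref{h3}, so $L$ is strongly (hence properly) elliptic. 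Second, I would verify the Lopatinski--Shapiro complementing condition for $(L; B_1, B_2)$: after freezing coefficients and flattening the boundary locally to $\{x_3 = 0\}$ with $n = -e_3$, one must check that for each nonzero tangential frequency $\xi'$ the only bounded solution on $\{x_3 > 0\}$ of the frozen ODE system with homogeneous slip data is zero. This is a finite explicit linear-algebra check; crucially $B_2 u = \curl u \times n$ contributes two independent scalar conditions (the tangential components of $\curl u$), which together with the one scalar condition $B_1 u = u\cdot n$ supply the three boundary conditions required by the order of the Lam\'e system. With both conditions verified, the ADN theorem delivers part~(1) directly.

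For part~(2) I would split on $k$. When $k\ge 1$, $f = \nabla g \in W^{k-1,q}$, and part~(1) applied with index $k-1$ already yields
$$\|u\|_{W^{k+1,q}}\le C(\|\nabla g\|_{W^{k-1,q}}+\|u\|_{L^q})\le C(\|g\|_{W^{k,q}}+\|u\|_{L^q}).$$
The genuinely non-trivial case is $k = 0$, where only $g\in L^q$ is assumed and one must gain a derivative from the gradient structure of the source. My plan here is a duality argument: for arbitrary $\varphi\in C_c^\infty(\Omega)^3$, solve the adjoint slip Lam\'e problem $L v = \varphi$ with $v\cdot n = 0$, $\curl v\times n = 0$; part~(1) at the exponent $q'$ bounds $v$ in $W^{2,q'}$ in terms of $\|\varphi\|_{L^{q'}}$ (plus a lower-order $\|v\|_{L^{q'}}$ term absorbed by Fredholm theory). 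Integrating by parts then gives $\langle u,\varphi\rangle = \langle u, L v\rangle = \langle\nabla g, v\rangle = -\langle g,\div v\rangle$, and the boundary contributions vanish because the slip pair makes $L$ formally self-adjoint. Testing with $\varphi$ of the form $\div\Phi$ for $\Phi\in C_c^\infty$ then recovers the $W^{1,q}$ bound.

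The principal obstacle is the Lopatinski--Shapiro verification in the first step; although classical, the computation is not purely mechanical for the slip pair and requires inspecting the decaying modes of $\widehat L(\xi', D_{x_3})$ and confirming invertibility of the resulting $3\times 3$ boundary matrix on their span. A secondary technical point is the vanishing of all boundary integrals in the $k=0$ duality step, which relies on the formal self-adjointness of $L$ with respect to the slip pairing, as well as a routine Fredholm argument to ensure the adjoint problem is solvable once the kernel (if any) is factored out.
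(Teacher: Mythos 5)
Your treatment of part (1) is, in substance, what the paper's citation of \cite{adn} presupposes: verify proper ellipticity of the Lam\'e symbol and the Lopatinski--Shapiro complementing condition for the slip pair, then invoke the ADN theorem. That is the right route, and your reduction of part (2) with $k\geq 1$ to part (1) at the shifted index $k-1$ is also correct. The formal self-adjointness you appeal to is genuine: with both $u$ and $v$ in the slip class, $\langle Lu,v\rangle = (\lambda+2\mu)\int \div u\,\div v + \mu\int\curl u\cdot\curl v = \langle u,Lv\rangle$, and the boundary terms produced by $\langle\nabla g, v\rangle = -\langle g,\div v\rangle$ vanish because $v\cdot n=0$.

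However, your duality argument for the case $k=0$ has a gap at the final step, and it is exactly where the one-derivative gain is supposed to appear. Take $\Phi\in C_c^\infty(\Omega;\mathbb{R}^{3\times3})$ and let $v$ solve $Lv=\div\Phi$ with the slip conditions. Your scheme gives $\langle\nabla u,\Phi\rangle = \langle g,\div v\rangle$, so you need $\|\div v\|_{L^{q'}}\leq C\|\Phi\|_{L^{q'}}$. But part (1) applied to $v$ only furnishes $\|v\|_{W^{2,q'}}\leq C\left(\|\div\Phi\|_{L^{q'}}+\|v\|_{L^{q'}}\right)$, hence $\|\div v\|_{L^{q'}}\lesssim\|\div\Phi\|_{L^{q'}}$, which loses a derivative relative to what duality requires. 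The bound you actually need, $\|\nabla v\|_{L^{q'}}\lesssim\|\Phi\|_{L^{q'}}$ for $Lv=\div\Phi$, is precisely the divergence-form ($W^{-1,q'}$) regularity estimate for the adjoint slip--Lam\'e problem, i.e., the same class of statement you are in the midst of proving. The transposition argument as written is therefore circular: testing against $\varphi\in L^{q'}$ and solving $Lv=\varphi$ honestly yields the lower-order bound $\|u\|_{L^q}\lesssim\|g\|_{L^q}+\|u\|_{L^q}$, but the passage to $\|\nabla u\|_{L^q}$ escapes it. The standard way to close this is not duality against part (1), but to run the ADN machinery directly on data in divergence form $f=\div T$ (equivalently $f\in W^{-1,q}$); ADN's framework covers this case explicitly, and it is this version of the theorem that the paper is implicitly invoking for part (2) at $k=0$. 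Alternatively, one can use the structural reduction that $\nabla g$ has vanishing curl, so $\curl u$ satisfies a homogeneous interior equation and $(\lambda+2\mu)\div u + g$ is harmonic, and then do elliptic regularity on the scalar and vector potentials separately — but that too goes beyond what part (1) plus naive transposition delivers.
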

\begin{definition}
Let $\Omega$ be a domain in $\r^3$. If the first Betti number of $\Omega$ vanishes, namely, any simple closed curve in $\Omega$ can be contracted to a point, we say that $\Omega$ is simply connected. If the second Betti number of $\Omega$ is zero, we say that $\Omega$ has no holes.
\end{definition}
The following two lemmas can be found in   \cite[Theorem 3.2]{vww} and   \cite[Propositions 2.6-2.9]{CANEHS}.
%or $\Omega$ has no holes and $v\times n=0$ on $\partial\Omega$
%$$\|v\|_{W^{k+1,q}}\leq C(\|\div v\|_{W^{k,q}}+\|\curl v\|_{W^{k,q}}+\|v\|_{L^q}).$$
\begin{lemma}  \la{crle1}
Let $k\geq0$ be a integer, $1<q<+\infty$, and assume that $\Omega$ is a simply connected bounded domain in $\r^3$ with $C^{k+1,1}$ boundary $\partial\Omega$. Then for $v\in W^{k+1,q}$ with $v\cdot n=0$ on $\partial\Omega$, there exists a constant $C=C(q,k,\Omega)$ such that
\be\|v\|_{W^{k+1,q}}\leq C(\|\div v\|_{W^{k,q}}+\|\curl v\|_{W^{k,q}}).\ee
In particular, for $k=0$, we have
\be \la{paz1}\|\nabla v\|_{L^q}\leq C(\|\div v\|_{L^q}+\|\curl v\|_{L^q}).\ee
\end{lemma}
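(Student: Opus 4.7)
The plan is to establish the $k=0$ case \eqref{paz1} as the analytic core and then bootstrap to general $k$ by induction and differentiation.

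For the base case, I would employ a Helmholtz-type decomposition tailored to the boundary condition. Write $v=\na\phi+w$, where $\phi$ solves the Neumann problem
\begin{equation*}
\Delta\phi=\div v \,\,\text{in}\,\, \O, \qquad \pa_n\phi=0 \,\,\text{on}\,\, \pa\O,
\end{equation*}
whose compatibility condition $\int\div v\,dx=\int_{\pa\O}v\cdot n\,dS=0$ follows automatically from $v\cdot n=0$. Standard $L^q$ elliptic regularity yields $\|\na\phi\|_{W^{1,q}}\le C\|\div v\|_{L^q}$. The residual $w:=v-\na\phi$ is then divergence-free, tangent to $\pa\O$, and satisfies $\curl w=\curl v$, so the task reduces to proving $\|\na w\|_{L^q}\le C\|\curl w\|_{L^q}$ for such $w$.

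To control $w$, I would use the identity $-\Delta w=\na\times\curl w$ (valid since $\div w=0$) and invoke the $L^q$ elliptic regularity theory for the vector Laplacian subject to the single scalar boundary condition $w\cdot n=0$; this is the absolute boundary value problem for $1$-forms. The abstract theory produces an estimate of the form $\|w\|_{W^{1,q}}\le C(\|\curl w\|_{L^q}+\|w\|_{L^q})$. The simple connectedness of $\O$ is precisely what is needed to absorb the lower-order term: the space of harmonic Neumann fields (divergence-free, curl-free, and tangent to $\pa\O$) is trivial on a simply connected domain, so a Peetre--Tartar type compactness-contradiction argument, based on the injectivity of $\curl$ on this class, removes $\|w\|_{L^q}$.

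For $k\ge 1$, I would argue by induction. In the interior, constant-coefficient differentiation commutes with $\div$ and $\curl$, so applying the base case to $\pa^\alpha v$ with $|\alpha|\le k$ controls interior $(k+1)$-derivatives by $\|\div v\|_{W^{k,q}}+\|\curl v\|_{W^{k,q}}$. Near $\pa\O$, in local coordinates flattening the boundary, I would apply the base estimate to tangential derivatives (which preserve $v\cdot n=0$) and then recover normal derivatives algebraically from the scalar identities encoding $\div v$ and components of $\curl v$. The commutators from the curvature of $\pa\O$ and the variable coefficients of the flattened equations produce only lower-order contributions, absorbed by interpolation.

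The main obstacle is the base-case estimate for $w$: only the single scalar condition $w\cdot n=0$ is imposed, so the $L^q$-regularity theory is more delicate than for the scalar Dirichlet or Neumann Laplacian, and requires the sharp $L^q$ Hodge theory for the absolute boundary value problem on $C^{k+1,1}$ domains. It is exactly here that the topological hypothesis of simple connectedness is used essentially (to kill the harmonic cohomology), and where the cited results in \cite{vww} and \cite{CANEHS} supply the requisite regularity.
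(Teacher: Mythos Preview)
The paper does not prove this lemma; it is quoted from \cite[Theorem 3.2]{vww} (see the sentence immediately preceding the statement), so there is no in-paper argument to compare your proposal against.

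Your overall strategy---Helmholtz splitting, scalar Neumann regularity for $\nabla\phi$, compactness plus simple connectedness to absorb the lower-order term, and tangential differentiation for the bootstrap---has the right shape. The problematic step is your treatment of the divergence-free residual $w$. You propose to estimate it via ``$L^q$ elliptic regularity for the vector Laplacian subject to the single scalar boundary condition $w\cdot n=0$,'' calling this the absolute boundary problem for $1$-forms. That identification is incorrect: the absolute boundary conditions for the Hodge Laplacian on $1$-forms comprise \emph{two} conditions, $w\cdot n=0$ \emph{and} $\curl w\times n=0$ on $\partial\Omega$. With only the first, the second-order problem $-\Delta w=f$, $w\cdot n=0$ is underdetermined and admits no off-the-shelf elliptic estimate. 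What you actually need for $w$ is the $L^q$ Gaffney-type inequality
\[
\|w\|_{W^{1,q}}\le C\bigl(\|\div w\|_{L^q}+\|\curl w\|_{L^q}+\|w\|_{L^q}\bigr)
\]
for tangential vector fields, and that is precisely the weak form of the statement being proved. So the argument, as written, is circular at its core; your closing sentence effectively concedes this by deferring the hard step back to \cite{vww} and \cite{CANEHS}. An independent proof at this point would require either the integral-representation approach of von Wahl, or a direct derivation of the $L^q$ Gaffney inequality (local flattening, the $L^2$ integration-by-parts identity, and Calder\'on--Zygmund theory), neither of which your outline supplies.
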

\begin{lemma}[\cite{CANEHS}]  \la{crle2}
Let $k\geq0$ be an  integer, $1<q<+\infty$. Suppose that $\Omega$ is a bounded domain in $\r^3$ and its $C^{k+1,1}$ boundary $\partial\Omega$ only has a finite number of 2-dimensional connected components. Then for $v\in W^{k+1,q}$ with $v\times n=0$ on $\partial\Omega$, there exists a constant $C=C(q,k,\Omega)$ such that
$$\|v\|_{W^{k+1,q}}\leq C(\|\div v\|_{W^{k,q}}+\|\curl v\|_{W^{k,q}}+\|v\|_{L^q}).$$
In particular, if  $\Omega$ has no holes, then
$$\|v\|_{W^{k+1,q}}\leq C(\|\div v\|_{W^{k,q}}+\|\curl v\|_{W^{k,q}}).$$
\end{lemma}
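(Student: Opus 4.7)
The plan is to reduce the div--curl estimate to standard $L^q$ elliptic theory via a Hodge-type decomposition that respects the boundary condition $v\times n=0$, and then to remove the lower-order term in the ``no holes'' case by a compactness and topology argument. I would prove the base case $k=0$ first and then bootstrap to general $k$.

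For $k=0$, solve the Dirichlet problem
\bnn \Delta\vp=\div v \text{ in }\O,\quad \vp=0 \text{ on }\pa\O. \enn
Since $\vp$ vanishes on $\pa\O$, $\na\vp$ is parallel to $n$ there, hence $\na\vp\times n=0$, and standard elliptic regularity yields $\|\vp\|_{W^{2,q}}\le C\|\div v\|_{L^q}$. Setting $w\triangleq v-\na\vp$ one has $\div w=0$, $\curl w=\curl v$, and $w\times n=0$ on $\pa\O$. The divergence-free field $w$ with tangential boundary data then admits a vector potential $A$ with $w=\curl A$, $\div A=0$, and $A\cdot n=0$ on $\pa\O$ (after subtracting a finite-dimensional projection onto harmonic fields coming from the topology of $\O$). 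Because $-\Delta A=\curl\curl A=\curl v$, the $W^{2,q}$ elliptic estimate for the vector Laplacian with these boundary conditions on $A$ yields $\|A\|_{W^{2,q}}\le C(\|\curl v\|_{L^q}+\|A\|_{L^q})$, and hence $\|w\|_{W^{1,q}}\le C(\|\curl v\|_{L^q}+\|A\|_{L^q})$. Combining with the bound for $\na\vp$ and controlling $\|A\|_{L^q}$ by $\|v\|_{L^q}$ gives the inequality for $k=0$. For general $k\ge 1$ the same decomposition together with higher-order elliptic regularity for the two auxiliary problems closes the induction.

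When $\O$ has no holes I would remove the $\|v\|_{L^q}$ term by contradiction: if the sharper inequality fails there exists a sequence $v_m$ with $\|v_m\|_{W^{k+1,q}}=1$ while $\|\div v_m\|_{W^{k,q}}+\|\curl v_m\|_{W^{k,q}}\to 0$; Rellich compactness and the already-established inequality combine to give a limit $v_\infty$ in $W^{k+1,q}$ with $\|v_\infty\|_{W^{k+1,q}}=1$, $\div v_\infty=0$, $\curl v_\infty=0$, and $v_\infty\times n=0$ on $\pa\O$. Such harmonic fields form a finite-dimensional space isomorphic to the relative de Rham cohomology $H^1(\O,\pa\O)$, whose dimension equals the second Betti number of $\O$ by Poincar\'e--Lefschetz duality; the ``no holes'' assumption forces it to be trivial, contradicting $v_\infty\not\equiv 0$. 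The main obstacle is the construction of the vector potential $A$ in the topologically non-trivial case, since one must project out a finite-dimensional space of harmonic fields under the appropriate mixed boundary conditions; once this Hodge-theoretic framework is set up, the rest of the argument reduces to classical Agmon--Douglis--Nirenberg type elliptic regularity.
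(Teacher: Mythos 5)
The paper does not prove Lemma \ref{crle2}; it is quoted verbatim from reference [CANEHS] (Aramaki, $L^p$ theory for the div-curl system), so there is no in-paper argument to compare against. Taken on its own merits, your Hodge-theoretic route is a plausible and standard way to establish the estimate, and the overall skeleton (split off a gradient $\nabla\vp$ via the Dirichlet problem for $\Delta\vp=\div v$, represent the divergence-free remainder $w$ by a vector potential $A$, apply Agmon--Douglis--Nirenberg regularity to $-\Delta A=\curl v$, then remove the lower-order term in the topologically trivial case by a compactness/uniqueness argument) is sound. The identification of the obstruction space $\{h:\div h=0,\ \curl h=0,\ h\times n=0\}$ with $H^1(\Omega,\partial\Omega)\cong H_2(\Omega)$, hence of dimension $b_2(\Omega)$, is the correct topological fact, and the Rellich bootstrap to upgrade $W^{k,q}$-convergence of the contradiction sequence to $W^{k+1,q}$-convergence via the already-proven inequality is the standard trick.

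There are, however, two places where you are leaning on material that would itself need either citation or a detailed argument, and it is worth naming them. First, the existence of the vector potential $A$ with $\curl A=w$, $\div A=0$, $A\cdot n=0$, together with the bound $\|A\|_{L^q}\leq C\|w\|_{L^q}$ (or $\|A\|_{W^{1,q}}\leq C\|w\|_{L^q}$) used to absorb the lower-order term from the ADN estimate, is not elementary: in a domain of general topology this requires the full machinery of $L^q$ vector potentials (Amrouche--Bernardi--Dauge--Girault, or the Aramaki paper itself), including the decomposition modulo the Dirichlet and Neumann harmonic fields. Your sentence ``after subtracting a finite-dimensional projection onto harmonic fields'' compresses several nontrivial steps, and one should be careful that the obstruction to solving $\curl A=w$ lives in $\mathcal{H}_D$ (dimension $b_2$), while the kernel of $-\Delta$ with the boundary conditions $A\cdot n=0$, $\curl A\times n=0$ is $\mathcal{H}_N$ (dimension $b_1$); these are distinct spaces and both must be accounted for. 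Second, when you invoke elliptic regularity for $-\Delta A=\curl v$ you should make explicit that the boundary data on $A$ consist of two conditions, $A\cdot n=0$ (one scalar equation) and $\curl A\times n=0$ (two scalar equations), the latter following from $\curl A=w$ and $w\times n=0$; only with this full slip boundary condition does the ADN estimate (the paper's Lemma \ref{zhle}) apply. A minor terminological slip: since $w\times n=0$ means $w$ is \emph{normal} to $\partial\Omega$, referring to it as ``tangential boundary data'' invites confusion with the $v\cdot n=0$ case of Lemma \ref{crle1}.
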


The following Beale-Kato-Majda type inequality, which was first proved in \cite{bkm,kato} when $\div u\equiv 0,$ and improved in \cite{hlx}, we give a similar result with respect to the slip boundary condition \eqref{ch1} to estimate $\|\nabla u\|_{L^\infty}$ and $\|\nabla\rho\|_{L^6}$ which have been proven in \cite{CL1}.
\begin{lemma}[\cite{CL1}]\la{le9}
For $3<q<\infty$, assume that $u\cdot n=0$ and $\curl u\times n=0$ on $\partial\Omega$, $ u\in W^{2,q}$, then there is a constant  $C=C(q,\Omega)$ such that  the following estimate holds
\bnn\ba
\|\na u\|_{L^\infty}\le C\left(\|{\rm div}u\|_{L^\infty}+\|\curl u\|_{L^\infty} \right)\ln(e+\|\na^2u\|_{L^q})+C\|\na u\|_{L^2} +C .
\ea\enn
\end{lemma}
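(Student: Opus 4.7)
The plan is to adapt the classical Beale--Kato--Majda argument to the slip boundary setting, in the spirit of the refinement in \cite{hlx}, but anchored on the div--curl identity available in the present geometry. The starting observation is Lemma~\ref{crle1}: for $v$ with $v\cdot n=0$ on $\partial\Omega$, the operator sending $(\div v,\curl v)$ to $\nabla v$ is bounded on $L^p$ for every $p\in(1,\infty)$, so $\nabla u$ is morally a singular-integral image of $(\div u,\curl u)$. The goal is to upgrade this boundedness into an $L^\infty$ bound with a controlled logarithmic loss.

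First I would fix a parameter $N>1$ and split $u=u_N+(u-u_N)$ via a smooth truncation or mollification at scale $N^{-1}$, chosen compatibly with $u\cdot n=0$ and $\curl u\times n=0$. For the high-scale remainder, the Gagliardo--Nirenberg inequality (Lemma~\ref{l1}) with $q>3$ produces
\[
\|\nabla(u-u_N)\|_{L^\infty}\le C N^{-\alpha}\|\nabla^2 u\|_{L^q}+C\|\nabla u\|_{L^2}
\]
for some $\alpha>0$. For the low-scale piece, the $L^\infty$ endpoint of the singular-integral representation of $\nabla u_N$ in terms of $(\div u_N,\curl u_N)$ yields a logarithmic estimate
\[
\|\nabla u_N\|_{L^\infty}\le C(\|\div u\|_{L^\infty}+\|\curl u\|_{L^\infty})\ln(e+N)+C\|\nabla u\|_{L^2}.
\]
Summing the two contributions and optimizing with $N=e+\|\nabla^2 u\|_{L^q}$ yields exactly the asserted inequality.

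The main obstacle is the boundary: both the frequency truncation and the $L^\infty$ endpoint bound on Riesz-type operators are standard on the whole space, but must be reconciled with $\partial\Omega$ and with the slip conditions that make Lemma~\ref{crle1} available. I would treat this by localizing $u$ via a partition of unity subordinate to boundary charts that flatten $\partial\Omega$, then performing an even/odd reflection of the components (tangential components odd, the normal component even, with respect to the normal direction) so as to preserve both $u\cdot n=0$ and $\curl u\times n=0$, and finally invoking the known whole-space log-endpoint estimate established in \cite{hlx}. The commutators produced by the partition of unity and the straightening diffeomorphisms are lower-order and are absorbed into the $C\|\nabla u\|_{L^2}+C$ remainder; this is essentially the route carried out in \cite{CL1} for the same slip boundary condition, so transferring it here is straightforward.
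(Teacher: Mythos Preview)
The paper does not supply its own proof of this lemma; it is quoted from \cite{CL1} and used as a black box. So there is no in-paper argument to compare against beyond the citation itself.

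Your outline follows the standard Beale--Kato--Majda strategy and is structurally sound: the div--curl representation of $\nabla u$ via Lemma~\ref{crle1}, a scale decomposition, a log-endpoint bound on the low-frequency Riesz-type piece, Gagliardo--Nirenberg on the high-frequency remainder, and optimization in the cutoff. This is indeed the route taken in \cite{CL1}, so the overall plan is correct.

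There is, however, a concrete error in your reflection step. After flattening the boundary to $\{x_3=0\}$ with $n=e_3$, the condition $u\cdot n=0$ is $u^3=0$ on $\{x_3=0\}$, which is preserved by an \emph{odd} extension of the normal component $u^3$ in $x_3$, not an even one. Conversely, $\curl u\times n=0$ combined with $u^3=0$ on the boundary reduces to the Neumann conditions $\partial_3 u^1=\partial_3 u^2=0$, which call for an \emph{even} extension of the tangential components. With the parities you stated (tangential odd, normal even), the extended field would instead satisfy $u^1=u^2=0$ on $\{x_3=0\}$, i.e.\ no-slip, and would not enforce $u\cdot n=0$; the extended $\div u$ and $\curl u$ would then acquire jump singularities across the interface, destroying the $L^\infty$ control you need. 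Once the parities are corrected (normal odd, tangential even), one checks that $\div u$ extends evenly and the tangential components of $\curl u$ extend oddly, so both remain in $L^\infty$ across the interface, and the whole-space estimate from \cite{hlx} applies as you intend. With that fix the remainder of your sketch goes through.
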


Next, consider the problem
\bn\la{e480}\begin{cases}
{\rm div}v=f,&x\in\Omega, \\
v=0,&x\in{\partial\Omega}.
\end{cases} \en
%where $\Omega$ is a bounded domain in $\r^{3}$ with Lipschitz boundary.
One has the following conclusion.
\begin{lemma} \cite[Theorem III.3.1]{GPG} \la{th00}  There exists a linear operator $\mathcal{B} = [\mathcal{B}_1 , \mathcal{B}_2 , \mathcal{B}_3 ]$ enjoying
the properties:

1) $$\mathcal{B}:\{f\in L^p(\O)|\int_\O fdx=0\}\mapsto (W^{1,p}_0(\O))^3$$ is a bounded linear operator, that is,
\be \|\mathcal{B}[f]\|_{W^{1,p}_0(\O)}\le C(p)\|f\|_{L^p(\O)}, \mbox{ for any }p\in (1,\infty),\ee

2) The function $v = \mathcal{B}[f]$ solve the problem \eqref{e480}.

3) if, moreover, $f$ can be written in the form $f = \div  g$ for a certain $g\in L^r(\O), g\cdot n|_{\pa\O}=0,$  then
\be \|\mathcal{B}[f]\|_{L^{r}(\O)}\le C(r)\|g\|_{L^r(\O)}, \mbox{ for any }r  \in (1,\infty).\ee
\end{lemma}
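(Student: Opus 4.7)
My plan is to reproduce the classical construction of the Bogovskii operator and then verify the three stated properties in turn. The starting point will be the observation that every bounded Lipschitz (in particular smooth) domain $\Omega$ admits a finite cover by open sets $\Omega_1,\dots,\Omega_N$, each of which is star-shaped with respect to some ball $B_i \Subset \Omega_i$. Using a smooth partition of unity $\{\chi_i\}$ subordinate to this cover, one decomposes any $f \in L^p(\Omega)$ with $\int_\Omega f\,dx = 0$ into pieces $f_i$, supported in $\Omega_i$ and of zero mean on $\Omega_i$; the zero-mean property is arranged by iteratively subtracting correction terms supported in the pairwise overlaps. It therefore suffices to construct $\mathcal{B}$ on a single star-shaped domain and then patch.

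On a domain $\Omega_i$ star-shaped with respect to $B_i$, I would use Bogovskii's explicit integral formula. Fix $\omega \in C_c^\infty(B_i)$ with $\int \omega = 1$; then define
\begin{equation*}
\mathcal{B}_i[f_i](x) = \int_{\Omega_i} f_i(y)\, N_i(x,y)\, dy,
\end{equation*}
where the vector kernel $N_i(x,y)$ has the form $\frac{x-y}{|x-y|^3}\int_{|x-y|}^{\infty} \omega\!\left(y + r\frac{x-y}{|x-y|}\right) r^{2}\,dr$. A direct computation, using the star-shapedness and $\int \omega = 1$, yields $\div \mathcal{B}_i[f_i] = f_i$ in $\Omega_i$, and the support properties ensure $\mathcal{B}_i[f_i] \in W^{1,p}_0(\Omega_i)$. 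The derivative kernel $\partial_x N_i(x,y)$ splits into a standard Calderón–Zygmund singular kernel (of the form $|x-y|^{-3}K(\tfrac{x-y}{|x-y|},y)$, with $K$ having zero mean on the sphere) plus a smooth remainder supported away from the diagonal. The CZ-theory on $\r^3$ (applied after a harmless extension of $f_i$ by zero) then yields $\|\nabla \mathcal{B}_i[f_i]\|_{L^p} \le C(p)\|f_i\|_{L^p}$ for all $p \in (1,\infty)$. Summing $\mathcal{B} := \sum_i \mathcal{B}_i$ over the patches and controlling the corrections in the decomposition of $f$ gives properties (1) and (2).

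For property (3), when $f = \div g$ with $g \in L^r$ and $g\cdot n = 0$ on $\partial\Omega$, the key idea is to integrate by parts directly inside Bogovskii's formula before taking any derivative in $x$. Writing $\mathcal{B}[f] = \int_\Omega N(x,y)\,\div_y g(y)\,dy$ and integrating by parts transfers the divergence onto the kernel:
\begin{equation*}
\mathcal{B}[f](x) = -\int_\Omega \nabla_y N(x,y)\cdot g(y)\,dy + \text{(boundary term)}.
\end{equation*}
The boundary term vanishes precisely because $g\cdot n = 0$ on $\partial\Omega$. The new kernel $\nabla_y N(x,y)$ is again of Calderón–Zygmund type (the crucial symmetry is that a $y$-derivative on $N$ produces the \emph{same} order of singularity as an $x$-derivative, because $N$ depends on $x-y$ in its leading part), so applying CZ theory a second time gives $\|\mathcal{B}[f]\|_{L^r} \le C(r)\|g\|_{L^r}$.

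The main obstacle I expect is twofold. First, making the partition-of-unity reduction rigorous requires carefully constructing the mean-zero pieces $f_i$ while keeping them controlled in $L^p$ uniformly; this is a combinatorial bookkeeping step that is standard but easy to mishandle. Second, and more delicate, is justifying the integration by parts in property (3) when $g$ is only $L^r$: one must argue by approximating $g$ by smooth vector fields with $g_\varepsilon\cdot n = 0$ on $\partial\Omega$, passing to the limit in the kernel identity, and verifying that the remaining kernel acts as a bounded singular integral on $L^r$, particularly near the corners of patches in the partition of unity. Both steps are classical (carried out in detail in Galdi's treatise), so my proposal is to invoke the theorem as stated in \cite{GPG} rather than reconstruct the full argument.
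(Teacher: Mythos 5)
Your proposal correctly reconstructs the standard Bogovskii argument (star-shaped decomposition, explicit kernel, Calder\'on--Zygmund estimates, and integration by parts for part~(3)) and, as you note at the end, the right move is simply to invoke Galdi's Theorem~III.3.1 --- which is exactly what the paper does: this lemma is stated as a citation with no proof supplied. Your sketch is accurate, but the paper's ``proof'' is just the reference to \cite{GPG}, so the two approaches coincide.
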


\subsection{Estimates for $F$, $\curl u$ and $\nabla u$}
From now on, we always assume $\Omega$ is a simply connected bounded domain in $\r^3$ whose smooth boundary $\partial\Omega$ only has a finite number of 2-dimensional connected components and $\psi\in H^2$ satisfies \eqref{Fc1}.
For $F$, $\curl u$ and $\nabla u$, we give the following conclusion, which is often used later.
\begin{lemma}   \la{le3}
Let $(\rho,u)$ be a smooth solution of \eqref{a1} with slip boundary condition \eqref{ch1}. Then for any $p\in[2,6],$ there exists a positive constant $C$ depending only on  $p$, $q$, $\mu$, $\lambda$, $\Omega$ and $\|\psi\|_H^2$ such that
\be\la{tdu1}\ba
\|\nabla u\|_{L^p}\leq C(\|\div u\|_{L^p}+\|\curl u\|_{L^p}),
\ea\ee
\be\la{h19}\ba
\|\nabla F\|_{L^p}\leq C (\|\rho\dot{u}\|_{L^p}+\|\rho-\rho_s\|_{L^{(6p)/(6-p)}}),
\ea\ee
\be\la{zh19}\ba
\|\nabla\curl u\|_{L^p}\leq C(\|\rho\dot{u}\|_{L^p}+\|\nabla u\|_{L^2}+\|\rho-\rho_s\|_{L^{(6p)/(6-p)}}),
\ea\ee
\be\la{h20}\ba
\|F\|_{L^p}&\leq C\|\rho\dot{u}\|_{L^2}^{(3p-6)/(2p)}(\|\nabla u\|_{L^2}+\|\rho-\rho_s\|_{L^2})^{(6-p)/(2p)}\\
&\quad+C(\|\nabla u\|_{L^2}+\|\rho-\rho_s\|_{L^3}),
\ea\ee
\be\la{h202}\ba
\|\curl u\|_{L^p}&\leq C\|\rho\dot{u}\|_{L^2}^{(3p-6)/(2p)}\|\nabla u\|_{L^2}^{(6-p)/(2p)}+C(\|\nabla u\|_{L^2}+\|\rho-\rho_s\|_{L^3}).
\ea\ee
Moreover,
\be\la{hh20}\ba
\|F\|_{L^p}+\|\curl u\|_{L^p}\leq C(\|\rho\dot{u}\|_{L^2}+\|\nabla u\|_{L^2}+\|\rho-\rho_s\|_{L^3}),
\ea\ee
\be\la{h18}\ba
\|\nabla u\|_{L^p}&\leq C\|\rho\dot{u}\|_{L^2}^{(3p-6)/(2p)}\|\nabla u\|_{L^2}^{(6-p)/(2p)}+C(\|\nabla u\|_{L^2}+\|\rho-\rho_s\|_{L^3}+\|\rho-\rho_s\|_{L^p}).
\ea\ee
\end{lemma}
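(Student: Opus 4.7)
The starting point is to rewrite $\eqref{a1}_2$ in the form
\[
\rho\dot u - (\rho-\rho_s)\na\psi = \na F - \mu\na\times\curl u,
\]
obtained from $\mu\Delta u = \mu\na\div u - \mu\na\times\curl u$ together with the steady-state relation $\na P(\rho_s) = \rho_s\na\psi$ of Remark \ref{rNns}. The new ingredient compared to the force-free analysis of \cite{CL1} is the remainder $(\rho-\rho_s)\na\psi$; its $L^p$-size is always controlled by H\"older and $H^1\hookrightarrow L^6$ through
\[
\|(\rho-\rho_s)\na\psi\|_{L^p}\le \|\rho-\rho_s\|_{L^{6p/(6-p)}}\|\na\psi\|_{L^6}\le C\|\psi\|_{H^2}\|\rho-\rho_s\|_{L^{6p/(6-p)}},
\]
which explains the appearance of $\|\rho-\rho_s\|_{L^{6p/(6-p)}}$ in \eqref{h19}--\eqref{zh19}. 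The estimate \eqref{tdu1} is immediate from Lemma \ref{crle1} applied to $u$ (since $u\cdot n=0$ on $\pa\O$).

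For \eqref{h19} and \eqref{zh19} I take the divergence and curl of the rewritten momentum equation to obtain the companion elliptic problems
\[
\Delta F = \div\bigl(\rho\dot u - (\rho-\rho_s)\na\psi\bigr),\qquad \mu\Delta\curl u = \curl\bigl(\rho\dot u - (\rho-\rho_s)\na\psi\bigr).
\]
For $F$, the Neumann datum $\na F\cdot n|_{\pa\O}$ is read off the rewritten momentum equation: \eqref{pzw1} converts $\rho\dot u\cdot n$ into $-\rho u\cdot\na n\cdot u$, while $\curl u\times n=0$ together with a curvature identity expresses $(\na\times\curl u)\cdot n$ as a curvature-weighted multiple of $\curl u\cdot n$. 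Standard $L^p$ Neumann elliptic estimates then yield \eqref{h19}. For \eqref{zh19}, I apply Lemma \ref{crle2} to $\curl u$ (using $\curl u\times n=0$ and $\div\curl u=0$) to get
\[
\|\na\curl u\|_{L^p}\le C\bigl(\|\curl\curl u\|_{L^p}+\|\curl u\|_{L^p}\bigr),
\]
and then use $\mu\curl\curl u = \na F - \rho\dot u + (\rho-\rho_s)\na\psi$ (from the rewritten momentum equation) to bound the first term on the right using \eqref{h19}. The $\|\na u\|_{L^2}$ contribution in \eqref{zh19} arises from $\|\curl u\|_{L^p}$ via the Sobolev bound $\|\curl u\|_{L^p}\le C(\|\curl u\|_{L^2}+\|\na\curl u\|_{L^2})$, after first running the whole argument at $p=2$ and absorbing.

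The interpolation-type bounds \eqref{h20}--\eqref{h18} descend from \eqref{h19} and \eqref{zh19} via Gagliardo--Nirenberg. Starting from the elementary bounds $\|F\|_{L^2}\le C(\|\na u\|_{L^2}+\|\rho-\rho_s\|_{L^2})$ (direct from the definition of $F$) and $\|\curl u\|_{L^2}\le C\|\na u\|_{L^2}$, I apply \eqref{g1} with exponent $(6-p)/(2p)$ on the $L^2$ factor and $(3p-6)/(2p)$ on the gradient factor, and substitute \eqref{h19}, \eqref{zh19} at $p=2$ (so the $L^{6p/(6-p)}$ norm reduces to $L^3$) for the gradient. A Young-inequality splitting of the cross terms produces \eqref{h20} and \eqref{h202}, and \eqref{hh20} is the envelope version after absorbing the interpolation powers into $\|\rho\dot u\|_{L^2}$. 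Finally, \eqref{h18} follows from \eqref{tdu1}, \eqref{h20}, \eqref{h202} and the algebraic identity $(2\mu+\lambda)\div u = F+(P-P(\rho_s))$, the last term contributing the extra $\|\rho-\rho_s\|_{L^p}$ via $\|P-P(\rho_s)\|_{L^p}\le C\|\rho-\rho_s\|_{L^p}$.

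The chief obstacle is the boundary analysis underlying \eqref{h19}: one must carefully exploit the slip boundary identities together with the curvature of $\pa\O$ to recast $\na F\cdot n$ on $\pa\O$ as terms absorbable at lower order. Once these boundary estimates are secured, the remainder is an exercise in Gagliardo--Nirenberg interpolation combined with H\"older's inequality.
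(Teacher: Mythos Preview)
Your overall strategy matches the paper's, but you have overcomplicated the one step you flag as the ``chief obstacle.'' In the paper's argument there is no curvature analysis on $\partial\Omega$ and no need to convert $\rho\dot u\cdot n$ or $(\nabla\times\curl u)\cdot n$ into lower-order boundary terms. Instead, one simply tests the rewritten momentum equation $\rho\dot u-(\rho-\rho_s)\nabla\psi=\nabla F-\mu\nabla\times\curl u$ against $\nabla\eta$ for arbitrary $\eta\in C^\infty(\mathbb{R}^3)$: the curl contribution vanishes identically because $\int(\nabla\times\curl u)\cdot\nabla\eta\,dx=\int_{\partial\Omega}(\curl u\times n)\cdot\nabla\eta\,ds-\int\curl u\cdot(\nabla\times\nabla\eta)\,dx=0$, using $\curl u\times n=0$ on $\partial\Omega$ and $\nabla\times\nabla\eta\equiv0$. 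This yields directly the \emph{compatible} Neumann problem
\[
\Delta F=\div g\ \text{ in }\Omega,\qquad \partial_n F=g\cdot n\ \text{ on }\partial\Omega,\qquad g\triangleq\rho\dot u-(\rho-\rho_s)\nabla\psi,
\]
for which the standard $L^p$ estimate (Lemma~4.27 in \cite{ANIS}) gives $\|\nabla F\|_{L^p}\le C\|g\|_{L^p}$ in one stroke; no boundary traces have to be estimated or absorbed separately. Everything else in your plan---Lemma~\ref{crle1} for \eqref{tdu1}, Lemma~\ref{crle2} applied to $\curl u$ for \eqref{zh19}, and the Gagliardo--Nirenberg interpolations for \eqref{h20}--\eqref{h18}---coincides with the paper's proof.
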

\begin{remark}
If $p=6,$ then $\|\rho-\rho_s\|_{L^{(6p)/(6-p)}}\triangleq\|\rho-\rho_s\|_{L^\infty}$.
\end{remark}
\begin{proof}
The inequality \eqref{tdu1} is a direct result of Lemma \ref{crle1}, since $u\cdot n=0$ on $\partial\Omega$. By $(\ref{a1})_2 $, one can find that the viscous flux $F$ satisfies
$$\int\nabla F\cdot\nabla\eta dx=\int(\rho\dot{u}-(\rho-\rho_s)\nabla \psi)\cdot\nabla\eta dx,\,\,\forall\eta\in C^{\infty}(\r^3),$$
i.e.,
\bnn\begin{cases}
\Delta F=\div(\rho\dot{u}-(\rho-\rho_s)\nabla\psi),~~ &x\in\Omega,\\ \frac{\partial F}{\partial n}=(\rho\dot{u}-(\rho-\rho_s)\nabla\psi)\cdot n,\,\, &x\in\partial\Omega.
\end{cases}\enn
It follows from Lemma 4.27 in \cite{ANIS} that for any $p\in [2,6]$,
\be\la{x266}\ba
\|\nabla F\|_{L^p}&\leq C\|(\rho\dot{u}-(\rho-\rho_s)\nabla\psi)\|_{L^p}\\
&\leq C (\|\rho\dot{u}\|_{L^p}+\|\rho-\rho_s\|_{L^{(6p)/(6-p)}}|\ \nabla\psi\|_{L^6})\\
&\leq C (\|\rho\dot{u}\|_{L^p}+\|\rho-\rho_s\|_{L^{(6p)/(6-p)}})
\ea\ee
On the other hand, one can rewrite $(\ref{a1})_2 $ as $\mu\nabla\times\curl u=\nabla F-\rho\dot{u}.$
Notice that $\curl u\times n=0$ on $\partial\Omega$ and $\div(\nabla\times\curl u)=0$, by Lemma \ref{crle2}, we get
\be\la{x267}\ba
\|\nabla\curl u\|_{L^p}&\leq C(\|\nabla\times\curl u\|_{L^p}+\|\curl u\|_{L^p})\\
&\leq C(\|\rho\dot{u}\|_{L^p}+\|\rho-\rho_s\|_{L^{(6p)/(6-p)}}+\|\curl u\|_{L^p}).
\ea\ee
By Sobolev's inequality and \eqref{x267},
\bnn\ba
\|\nabla\curl u\|_{L^p}&\le C(\|\rho\dot{u}\|_{L^p}+\|\curl u\|_{L^p}+\|\rho-\rho_s\|_{L^{(6p)/(6-p)}}) \\
&\le C(\|\rho\dot{u}\|_{L^p}+\|\nabla\curl u\|_{L^2}+\|\curl u\|_{L^2}+\|\rho-\rho_s\|_{L^{(6p)/(6-p)}}) \\
&\le C(\|\rho\dot{u}\|_{L^p}+\|\rho\dot{u}\|_{L^2}+\|\curl u\|_{L^2}+\|\rho-\rho_s\|_{L^{(6p)/(6-p)}}) \\
&\le C(\|\rho\dot{u}\|_{L^p}+\|\nabla u\|_{L^2}+\|\rho-\rho_s\|_{L^{(6p)/(6-p)}}),
\ea\enn
so that \eqref{zh19} holds.

Furthermore, one can deduce from \eqref{g1} and \eqref{h19} that for $p\in[2,6]$,
\bnn\ba
\|F\|_{L^p}&\leq C\|F\|_{L^2}^{(6-p)/(2p)}\|\nabla F\|_{L^2}^{(3p-6)/(2p)}+C\|F\|_{L^2}\\
&\leq C(\|\rho\dot{u}\|_{L^2}+\|\rho-\rho_s\|_{L^3})^{(3p-6)/(2p)}(\|\nabla u\|_{L^2}+\|\rho-\rho_s\|_{L^2})^{(6-p)/(2p)}\\
&\quad+C(\|\nabla u\|_{L^2}+\|\rho-\rho_s\|_{L^2})\\
&\leq C\|\rho\dot{u}\|_{L^2}^{(3p-6)/(2p)}(\|\nabla u\|_{L^2}+\|\rho-\rho_s\|_{L^2})^{(6-p)/(2p)}\\
&\quad+C(\|\nabla u\|_{L^2}+\|\rho-\rho_s\|_{L^3}),
\ea\enn
which also implies that
\bnn\ba
\|F\|_{L^p}\leq C(\|\rho\dot{u}\|_{L^2}+\|\nabla u\|_{L^2}+\|\rho-\rho_s\|_{L^3}).
\ea\enn

Similarly,
\bnn\ba
\|\curl u\|_{L^p}&\leq C\|\curl u\|_{L^2}^{(6-p)/(2p)}\|\nabla \curl u\|_{L^2}^{(3p-6)/(2p)}+C\|\curl u\|_{L^2}\\
&\leq C(\|\rho\dot{u}\|_{L^2}+\|\rho-\rho_s\|_{L^3}+\|\nabla u\|_{L^2})^{(3p-6)/(2p)}\|\nabla u\|_{L^2}^{(6-p)/(2p)}\\
&\quad+C\|\nabla u\|_{L^2}\\
&\leq C\|\rho\dot{u}\|_{L^2}^{(3p-6)/(2p)}\|\nabla u\|_{L^2}^{(6-p)/(2p)}+C(\|\nabla u\|_{L^2}+\|\rho-\rho_s\|_{L^3}),
\ea\enn
which also leads to
\bnn\ba
\|\curl u\|_{L^p}\leq C(\|\rho\dot{u}\|_{L^2}+\|\nabla u\|_{L^2}+\|\rho-\rho_s\|_{L^3}).
\ea\enn
Hence, \eqref{h20} and \eqref{hh20} are established.

By virtue of \eqref{tdu1} and \eqref{h20}, it indicates that
\bnn\ba
\|\nabla u\|_{L^p}&\le C(\|\div u\|_{L^p}+\|\curl u\|_{L^p} ) \\
&\le C(\|F\|_{L^p}+\|\curl u\|_{L^p}+\|P-P(\rho_s)\|_{L^p} )\\
&\le C(\|\rho\dot{u}\|_{L^2}^{(3p-6)/(2p)}\|\nabla u\|_{L^2}^{(6-p)/(2p)}+\|\nabla u\|_{L^2}+\|\rho-\rho_s\|_{L^3}+\|\rho-\rho_s\|_{L^p}).
\ea\enn
This completes the proof.
\end{proof}
\begin{remark}
Consider the following Lam\'{e}'s system
\be\la{lames}\begin{cases}
-\mu\Delta u-(\lambda+\mu)\nabla\div u=-\rho\dot{u}-\nabla(P-P(\rho_s))+(\rho-\rho_s)\nabla\psi, \,\, &x\in\Omega, \\
u\cdot n=0\,\,and\,\,\curl u\times n=0,\,\,&x\in\partial\Omega,
\end{cases} \ee
by Lemma \ref{zhle} and Gagliardo-Nirenberg's inequality,
\be\la{remark1}\ba
\|\nabla^{2} u\|_{L^p}&\le C(\|\rho\dot{u}\|_{L^p}+\|P-P(\rho_s)\|_{W^{1,p}}+\|(\rho-\rho_s)\nabla\psi\|_{L^p}+\|u\|_{L^p} ) \\
&\leq C(\|\rho\dot{u}\|_{L^p}+\|\nabla u\|_{L^2}+\|\nabla P\|_{L^p}+\|P-P(\rho_s)\|_{L^2}+\|P-P(\rho_s)\|_{L^p})\\
&+C\|(\rho-\rho_s)\nabla\psi\|_{L^p},
\ea\ee
and
\be\la{remark2}\ba
\|\nabla^{3} u\|_{L^p}&\le C(\|\rho\dot{u}\|_{W^{1,p}}+\|P-P(\rho_s)\|_{W^{2,p}}+C\|(\rho-\rho_s)\nabla\psi\|_{W^{1,p}}+\|u\|_{L^p} ) \\
&\leq C(\|\rho\dot{u}\|_{L^p}+\|\nabla u\|_{L^2}+\|\nabla(\rho\dot{u})\|_{L^p}+\|\nabla^{2} P\|_{L^p})+C\|(\rho-\rho_s)\nabla\psi\|_{W^{1,p}}\\
&\quad+C(\|\nabla P\|_{L^p}+\|P-P(\rho_s)\|_{L^2}+\|P-P(\rho_s)\|_{L^p}).
\ea\ee
\end{remark}

\section{\la{se3} Time-independent a priori estimates}
Let $T>0$ be a fixed time and $(\n,u)$ be a smooth solution to (\ref{a1})-(\ref{ch1})  on
$\Omega \times (0,T]$  with smooth initial
data $(\n_0,u_0)$ satisfying (\ref{dt1}) and (\ref{dt2}). We will derive some necessary a priori bounds for smooth solutions to the problem (\ref{a1})-(\ref{ch1}) which can extend the local strong or classical solution guaranteed by Lemma \ref{loc1} to be a global one.

Setting $\si=\si(t)\triangleq\min\{1,t \},$ we define
 \be\la{As1}
  A_1(T) \triangleq \sup_{   0\le t\le T  }\left(\sigma\|\nabla u\|_{L^2}^2\right) + \int_0^{T} \sigma\|\n^{1/2}\dot{u} \|_{L^2}^2dt,
  \ee
\be \la{As2}
  A_2(T)  \triangleq\sup_{  0\le t\le T   }\sigma^3\|\n^{1/2}\dot{u} \|_{L^2}^2dx + \int_0^{T}\sigma^3\|\nabla\dot{u}\|_{L^2}^2dt,
\ee
and
\be \la{As3}
  A_3(T)  \triangleq\sup_{  0\le t\le T   }\|\nabla u\|_{L^2}^2.
\ee

This section is entirely devoted to prove the following conclusion.%In order to obtain this proposition, we need to prove Lemmas \ref{xcrle1}--\ref{le7}, which are all established under the assumption \eqref{zz1}.
\begin{proposition}\la{pr1}  Under  the conditions of Theorem \ref{th1}, for $\delta_0\triangleq\frac{2s-1}{4s}\in(0,\frac{1}{4}],$
   there exist  positive constant  $\ve$ and $K$
    depending    on  $\mu$, $\lambda$, $\ga$, $a$,   $\inf\limits_{\overline{\Omega}}\psi$, $\|\psi\|_{H^2}$, $\hat{\rho}$,  $\Omega,$ and $M$  such that if
       $(\rho,u)$  is a smooth solution of
       \eqref{a1}--\eqref{ch1}  on $\Omega\times (0,T] $
        satisfying
 \be\la{zz1}
 \sup\limits_{
 \Omega\times [0,T]}\rho\le 2\hat{\rho},\quad
     A_1(T) + A_2(T) \le 2C_0^{1/2},\quad A_3(\sigma(T))\leq 2K,
  \ee
 then the following estimates hold
        \be\la{zz2}
 \sup\limits_{\Omega\times [0,T]}\rho\le 7\hat{\rho}/4, \quad
     A_1(T) + A_2(T) \le  C_0^{1/2},\quad A_3(\sigma(T))\leq K,
  \ee
   provided $C_0\le \ve.$
\end{proposition}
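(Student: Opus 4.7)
The strategy is a continuation/bootstrap: assuming the a priori bounds \eqref{zz1}, I will derive each of the three estimates in \eqref{zz2} with an extra smallness gain that can be absorbed by choosing $C_0$ small. The centerpiece is the decomposition \eqref{hod1} of $\nabla(P - P(\rho_s)) - (\rho-\rho_s)\nabla\psi$ via the effective viscous flux $F$, together with Lemma \ref{le3} to convert every bound on $\nabla u,\,\div u,\,\curl u,\,F$ into bounds involving $\rho\dot u$ and $\|\rho-\rho_s\|_{L^p}$. First, I would multiply $(\ref{a1})_2$ by $u$ and integrate; using the continuity equation and the decomposition \eqref{s3.60} to handle the external-force contribution, I obtain the basic energy identity
\be\nonumber
\sup_{0\le t\le T}\!\int\!\left(\tfrac{1}{2}\rho|u|^2 + G(\rho,\rho_s)\right)dx + \int_0^T\!\int\!\left(\mu|\curl u|^2+(2\mu+\lambda)|\div u|^2\right)dx\,dt \le CC_0.
\ee
A further test of the momentum equation against $\mathcal B[\rho - \rho_s - \overline{\rho-\rho_s}]$ (Lemma \ref{th00}) combined again with \eqref{s3.60} would upgrade this to the essential spacetime bound $\int_0^T \|\rho-\rho_s\|_{L^2}^2 dt \le CC_0^{1/2}$, since only $G$, not $|\rho-\rho_s|^2$, is available pointwise in time.

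For $A_1(T)$, I would multiply $(\ref{a1})_2$ by $\sigma\dot u$ and integrate by parts. The viscous part produces $\tfrac{d}{dt}\sigma\bigl(\mu\|\curl u\|_{L^2}^2+(2\mu+\lambda)\|\div u\|_{L^2}^2\bigr)$ plus $\sigma\|\rho^{1/2}\dot u\|_{L^2}^2$, and the standard boundary integrals that arise are controlled through the identity \eqref{pzw1}, which converts $\int_{\partial\Omega}(u\cdot\nabla u)\cdot n$ into a curvature-weighted $L^2$ integral of $u_\tau$ that is bounded by $\|\nabla u\|_{L^2}^2$. The pressure and potential-force terms are handled by pulling out $P(\rho_s)$ (steady state balance) and applying the estimates \eqref{hh20}--\eqref{h18} of Lemma \ref{le3} together with the spacetime bound on $\|\rho-\rho_s\|_{L^2}^2$. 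Gronwall then yields $A_1(T)\le CC_0^{1/2}\ll C_0^{1/2}/2$ for $C_0$ small. The analogous estimate for $A_2(T)$ follows by differentiating $(\ref{a1})_2$ in $t$, testing against $\sigma^3\dot u$, and handling the worst cubic term $\int \rho(u\cdot\nabla\dot u)\cdot\dot u$ via Hölder, Gagliardo--Nirenberg \eqref{g1} and the freshly obtained $A_1(T)\le 2C_0^{1/2}$.

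For $A_3(\sigma(T))$, on the short interval $[0,\sigma(T)]$ I cannot afford a $\sigma$ weight, so instead I test $(\ref{a1})_2$ against $u_t$ directly. The initial bound $\|u_0\|_{H^1}\le M$ together with the compatibility--like estimate supplied by the local theory (Lemma \ref{loc1}) and Gronwall on $[0,\sigma(T)]$ produces $A_3(\sigma(T))\le C(M)$, and one fixes $K$ depending only on the allowed parameters so that $C(M)\le K$. The spacetime integral $\int_0^{\sigma(T)}\|\rho^{1/2}u_t\|_{L^2}^2 dt$ that appears is absorbed using $A_1(T)$ and the already-proved $\|\rho-\rho_s\|_{L^2}^2$ bound.

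Finally, the uniform density upper bound is derived from Zlotnik's inequality (Lemma \ref{le1}). Along particle trajectories, the continuity equation gives
\be\nonumber
\frac{D}{Dt}\rho = -\rho\,\div u = -\tfrac{\rho}{2\mu+\lambda}\bigl(F + P(\rho) - P(\rho_s)\bigr),
\ee
so setting $y(t)=\rho(X(t),t)$ one has $y' = g(y) + b'(t)$ with $g(\rho)=-\tfrac{\rho}{2\mu+\lambda}(P(\rho)-P(\rho_s))$, which satisfies $g(+\infty)=-\infty$, and $b'(t)=-\tfrac{\rho}{2\mu+\lambda}F$. Using $\|F\|_{L^\infty}\le C\|F\|_{L^p}^\alpha\|\nabla F\|_{L^q}^{1-\alpha}$ from \eqref{g2} and the bounds \eqref{h19}, \eqref{hh20} fed by $A_1, A_2$, one verifies that $b(t_2)-b(t_1)$ satisfies \eqref{a100} with $N_0=CC_0^{1/4}$ small and $N_1$ bounded independently of $T$, and Lemma \ref{le1} yields $\rho\le \max\{\hat\rho,\bar\zeta\}+CC_0^{1/4}\le 7\hat\rho/4$ for $C_0$ sufficiently small. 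The main obstacle in the whole scheme is the coupling of $\rho-\rho_s$ with the large forcing: without the identity \eqref{s3.60} the pressure/force contribution produces uncontrollable $\|\nabla\psi\|_{L^\infty}\|\rho-\rho_s\|$ terms in every test; with it, all estimates close at the cost of the small factor $C_0^{1/2}$ (or $C_0^{1/4}$) needed in the bootstrap.
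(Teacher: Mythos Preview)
Your overall strategy matches the paper's: basic energy, the Bogovskii test for $\int_0^T\|\rho-\rho_s\|_{L^2}^2\,dt$, $\sigma^m\dot u$-testing for $A_1,A_2$, unweighted $\dot u$-testing on $[0,\sigma(T)]$ for $A_3$, and Zlotnik for the density. One step does not close as written, however. You assert that Gronwall gives $A_1(T)\le CC_0^{1/2}\ll C_0^{1/2}/2$ for $C_0$ small, but the ratio $CC_0^{1/2}/(C_0^{1/2}/2)=2C$ is independent of $C_0$; to close the bootstrap for $A_1+A_2$ you must produce a power of $C_0$ \emph{strictly larger} than $1/2$, and Gronwall does not supply it.

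The paper accomplishes this in two stages. First (Lemma~\ref{xcrle1}), testing $(\ref{a1})_2$ against $\sigma^m\dot u$ and applying $\sigma^m\dot u^j[\partial_t+\div(u\,\cdot)]$ to \eqref{xdy1} yield not closed bounds but the structural inequalities
\[
A_1(T)\le CC_0+C\int_0^T\sigma\|\nabla u\|_{L^3}^3\,dt,\qquad
A_2(T)\le CC_0+CA_1(T)+C\int_0^T\sigma^3\|\nabla u\|_{L^4}^4\,dt,
\]
with cubic and quartic remainders left intact. Second (Lemma~\ref{le5}), these remainders are estimated directly from the a~priori assumption \eqref{zz1}, the basic energy \eqref{a16}, and the spacetime bound \eqref{e5}. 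Using \eqref{h18} to interpolate $\|\nabla u\|_{L^3}^3\lesssim\|\rho^{1/2}\dot u\|_{L^2}^{3/2}\|\nabla u\|_{L^2}^{3/2}+\cdots$ and then H\"older in time, one gets for instance
\[
\int_0^{\sigma(T)}\sigma\|\nabla u\|_{L^3}^3\,dt\le C\,A_3(\sigma(T))^{1/2}\Bigl(\int_0^{\sigma(T)}\|\nabla u\|_{L^2}^2\,dt\Bigr)^{1/4}A_1(T)^{3/4}+CC_0\le CC_0^{5/8},
\]
while $\int_{\sigma(T)}^T\sigma^3\|\nabla u\|_{L^4}^4\,dt\le CC_0$ by a similar interpolation. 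The net result is $A_1(T)+A_2(T)\le C_4C_0^{5/8}$, and \emph{now} $C_4C_0^{5/8}\le C_0^{1/2}$ once $C_0\le(1/C_4)^8$. Without this extra $C_0^{1/8}$ of slack the bootstrap on $A_1+A_2$ cannot be closed. A similar issue affects your Zlotnik step: the paper splits $[0,\sigma(T)]$ from $[\sigma(T),T]$ and squeezes out $N_0=CC_0^{1/20}$ on the short interval by a delicate interpolation of $\|F\|_{L^\infty}$ against $A_1$; a generic ``$N_0=CC_0^{1/4}$'' is the right idea but the exponent has to be earned.
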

\begin{proof}Proposition \ref{pr1} is a consequence of the
following Lemmas \ref{a3c}--\ref{le7}.
\end{proof}

%Let's start with the boundary $\partial\Omega$ and the boundary condition $u\cdot n=0$. Set $u^{\perp}=-u\times n.$ Utilizing the boundary condition $u\cdot n=0$, it is easy to check that\be\la{bdd1}u\cdot\nabla u\cdot n=-u\cdot\nabla n\cdot u=-\kappa(x,u)|u|^2 \,\,\,\text{on}\,\,\,\partial\Omega,\ee where $\kappa(x,u)$, geometrically, is the normal curvature along $u$ direction at the point $x\in\partial\Omega$.

One can extend the function $n$ to $\Omega$ such that $n\in C^3(\bar{\Omega})$, and in the following discussion we still denote the extended function by $n$.

The first lemma in this section, which depends on   $u\cdot n|_{\partial \Omega}=0$, is proven in \cite[Lemma 3.2]{CL1}.

\begin{lemma}[\cite{CL1}]\la{uup1}If $(\n,u)$ is a smooth solution of
   (\ref{a1}) with slip boundary condition \eqref{ch1}, then   there exists a positive constant $C$ depending only on   $\Omega$ such that
\be\la{tb90}
\ba\|\dot{u}\|_{L^6}\le C(\|\nabla\dot{u}\|_{L^2}+\|\nabla u\|_{L^2}^2),
\ea\ee
\be\la{tb11}\ba
\|\nabla\dot{u}\|_{L^2}\le C(\|\div \dot{u}\|_{L^2}+\|\curl \dot{u}\|_{L^2}+\|\nabla u\|_{L^4}^2).
\ea\ee
\end{lemma}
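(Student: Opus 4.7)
The plan hinges on a single observation: although $\dot u$ does not satisfy the slip boundary condition $\dot u\cdot n|_{\partial\Omega}=0$, its boundary defect is purely quadratic in $u$. Indeed, since $u\cdot n=0$ holds on $\partial\Omega$ for every $t$ we have $u_t\cdot n=0$ there, while applying the tangential vector field $u$ to the identity $u\cdot n\equiv 0$ on $\partial\Omega$ yields $(u\cdot\nabla u)\cdot n=-u\cdot\nabla n\cdot u$. Summing gives
\[\dot u\cdot n=-u\cdot\nabla n\cdot u\qquad\text{on }\partial\Omega,\]
exactly the identity already singled out as \eqref{pzw1}.

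The plan is to split $\dot u=v+w$ so that $v\cdot n|_{\partial\Omega}=0$, by taking $w:=(u\cdot\nabla\tilde n\cdot u)\tilde n$ where $\tilde n\in C^3(\overline\Omega)$ is a smooth extension of the outward unit normal. For \eqref{tb90}, Lemma \ref{l1} applies to $v$ with $C_2=0$ (since $v\cdot n|_{\partial\Omega}=0$) and gives $\|v\|_{L^6}\le C\|\nabla v\|_{L^2}$; the triangle inequality then reduces the estimate to
\[\|\dot u\|_{L^6}\le C\|\nabla\dot u\|_{L^2}+C(\|\nabla w\|_{L^2}+\|w\|_{L^6}).\]
For \eqref{tb11}, Lemma \ref{crle1} applies to $v$ and yields $\|\nabla v\|_{L^2}\le C(\|\div v\|_{L^2}+\|\curl v\|_{L^2})$, whence
\[\|\nabla\dot u\|_{L^2}\le C(\|\div\dot u\|_{L^2}+\|\curl\dot u\|_{L^2})+C(\|\nabla w\|_{L^2}+\|\div w\|_{L^2}+\|\curl w\|_{L^2}).\]

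The remaining work is to estimate the correction $w$. The pointwise bounds $|w|\le C|u|^2$ and $|\nabla w|\le C(|u||\nabla u|+|u|^2)$ are immediate from $\tilde n\in C^3$. Since $\Omega$ is simply connected and $u\cdot n|_{\partial\Omega}=0$, Lemma \ref{crle1} applied to $u$ itself gives $\|u\|_{H^1}\le C\|\nabla u\|_{L^2}$, so Sobolev embedding yields $\|u\|_{L^q}\le C\|\nabla u\|_{L^2}$ for $q\in[2,6]$; in particular $\|u\|_{L^4}\le C\|\nabla u\|_{L^2}$. For \eqref{tb11} this produces
\[\|\nabla w\|_{L^2}\le C\|u\|_{L^4}\|\nabla u\|_{L^4}+C\|u\|_{L^4}^2\le C\|\nabla u\|_{L^2}\|\nabla u\|_{L^4}+C\|\nabla u\|_{L^2}^2\le C\|\nabla u\|_{L^4}^2,\]
the last step using $\|\nabla u\|_{L^2}\le C|\Omega|^{1/4}\|\nabla u\|_{L^4}$ on the bounded domain, so that \eqref{tb11} follows.

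The main obstacle is the $L^6$ bound $\|w\|_{L^6}\le C\|\nabla u\|_{L^2}^2$ needed for \eqref{tb90}: the naive pointwise estimate only gives $\|w\|_{L^6}\le C\|u\|_{L^{12}}^2$, and in three dimensions $H^1\not\hookrightarrow L^{12}$. I expect to circumvent this by replacing the explicit $w$ with the $H^1(\Omega)$-extension of $-u\cdot\nabla n\cdot u$ produced by a trace-lift operator, so that $\|w\|_{L^6}+\|\nabla w\|_{L^2}\le C\|w\|_{H^1}\le C\|u\cdot\nabla n\cdot u\|_{H^{1/2}(\partial\Omega)}$; the latter is then controlled by $\|\nabla u\|_{L^2}^2$ via Sobolev multiplication for the trace $u|_{\partial\Omega}\in H^{1/2}(\partial\Omega)$ on the two-dimensional manifold $\partial\Omega$, combined with the Poincaré-Sobolev bound for $u$ above. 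This technical step is the heart of the lemma; the div-curl and Gagliardo-Nirenberg inequalities handle the rest.
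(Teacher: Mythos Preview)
Your splitting $\dot u=v+w$ with $v\cdot n|_{\partial\Omega}=0$ is precisely the mechanism the paper uses (it singles out the explicit correction $w=-(u\cdot\nabla n)\times u^\perp$ right before stating the lemma), and for \eqref{tb11} your argument is complete: Lemma~\ref{crle1} applied to $v$, together with $\|\nabla w\|_{L^2}+\|\div w\|_{L^2}+\|\curl w\|_{L^2}\le C\|\nabla u\|_{L^4}^2$, yields the claim.

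For \eqref{tb90} there is a genuine gap. Your proposed repair relies on the product estimate
\[
\|u\cdot\nabla n\cdot u\|_{H^{1/2}(\partial\Omega)}\le C\|u\|_{H^{1/2}(\partial\Omega)}^2,
\]
but $H^{1/2}$ on a two\-dimensional manifold is \emph{not} a multiplication algebra: $H^{s}(\mathbb R^2)$ is an algebra only for $s>1$, and $s=\tfrac12$ is the critical failure case ($H^{1/2}(\partial\Omega)\hookrightarrow L^4(\partial\Omega)$ only, so the product lands in $L^2(\partial\Omega)$, not back in $H^{1/2}$). Hence the trace\-lift does not produce $\|w\|_{H^1(\Omega)}\le C\|\nabla u\|_{L^2}^2$. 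Note also that the same scaling obstruction hits $\|\nabla w\|_{L^2}$: your own computation gives $\|\nabla w\|_{L^2}\le C\|\nabla u\|_{L^4}^2$, which is fine for \eqref{tb11} but is not bounded by $\|\nabla u\|_{L^2}^2$ as \eqref{tb90} requires.

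A route that avoids both $\|w\|_{L^6}$ and $\|\nabla w\|_{L^2}$ is to apply Lemma~\ref{l1} directly to $\dot u$ (with the $C_2$\--term kept), obtaining $\|\dot u\|_{L^6}\le C\|\nabla\dot u\|_{L^2}+C\|\dot u\|_{L^2}$, and then prove the Poincar\'e\-type bound $\|\dot u\|_{L^2}\le C(\|\nabla\dot u\|_{L^2}+\|\nabla u\|_{L^2}^2)$ by compactness. If it failed one would have sequences with $\|\dot u_n\|_{L^2}=1$ and $\|\nabla\dot u_n\|_{L^2}+\|\nabla u_n\|_{L^2}^2\to0$; then $\dot u_n\to c$ (a constant vector) strongly in $L^2$ and in $L^2(\partial\Omega)$ by compact trace embedding, while $u_n\to0$ in $H^1$ forces $u_n\cdot\nabla n\cdot u_n\to0$ in $L^2(\partial\Omega)$. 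Passing to the limit in $\dot u_n\cdot n=-u_n\cdot\nabla n\cdot u_n$ gives $c\cdot n\equiv0$ on $\partial\Omega$, hence $c=0$ since the outward normals to a smooth bounded boundary span $\mathbb R^3$; this contradicts $\|c\|_{L^2}=1$. Only the harmless bound $\|w\|_{L^2}\le C\|u\|_{L^4}^2\le C\|\nabla u\|_{L^2}^2$ enters implicitly.
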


%\begin{proof} First, setting $u^{\perp}\triangleq -u\times n,$  we have by \eqref{ch1}
%$$\dot{u}\cdot n= u\cdot\nabla u\cdot n=-u\cdot\nabla n\cdot u=-(u\cdot\nabla n)\times u^{\perp}\cdot n\,\,\,\text{on}\,\,
%\partial\Omega,$$ due to  $$  v\times (u\times n  )=(v\cdot n)u-(v\cdot u)n,$$ with $v=u\cdot \na n.$ This thus implies
%\be\la{paz2}(\dot{u}+(u\cdot\nabla n)\times u^{\perp})\cdot n=0 \mbox{ on } \partial\Omega,\ee which together with Poincar\'{e}'s inequality gives
%$$\|\dot{u}+(u\cdot\nabla n)\times u^{\perp}\|_{L^\frac{3}{2}}\le C\|\nabla(\dot{u}+(u\cdot\nabla n)\times u^{\perp} )\|_{L^\frac{3}{2}}.$$ Thus, we have
%\be\la{tb9}
%\ba\|\dot{u}\|_{L^\frac{3}{2}}\le C(\|\nabla\dot{u}\|_{L^\frac{3}{2}}+\|\nabla u\|_{L^2}^{2}),
%\ea\ee which together with Sobolev's embedding theorem yields \eqref{tb90}.

%Finally, taking  $v=\dot{u}+(u\cdot\nabla n)\times u^{\perp}$ in \eqref{paz1} yields  \eqref{tb11} due to \eqref{paz2}.
%\end{proof}

In the following, we will use the convention that $C$ denotes a generic positive constant depending on $\mu$, $\lambda$, $\ga$, $a$, $\inf\limits_{\overline{\Omega}}\psi$, $\|\psi\|_{H^2}$, $\hat{\rho}$,  $\Omega$, $M$, and use $C(\alpha)$ to emphasize that $C$ depends on $\alpha$.

We begin with the following standard energy estimate for $(\rho,u)$.

%We have the following key a priori estimates on $(\n,u)$.
%\begin{proposition}\la{pr1}  Under  the conditions of Theorem \ref{th1},
%     there exists some  positive constant  $\ve$
%    depending    on  $\mu ,  \lambda ,   \ga ,  a ,  \on, s,$ $N_0,$ and $M$  such that if
%       $(\n,u)$  is a smooth solution of
%       (\ref{a1})-(\ref{h2})  on $\O \times (0,T] $
%        satisfying
% \be\la{z1}
% \sup\limits_{
% \O \times [0,T]}\n\le 2\bar{\n},\quad
%     A_1(T) + A_2(T) \le 2C_0^{1/2},
%   \ee
%     the following estimates hold
%        \be\la{z2}
% \sup\limits_{\O \times [0,T]}\n\le 7\bar{\n}/4, \quad
%     A_1(T) + A_2(T) +\int_0^T\si \|P\|_{L^2}^2dt\le  C_0^{1/2},
%  \ee
%   provided $C_0\le \ve.$
%\end{proposition}

%The proof of Proposition \ref{pr1} will be postponed to the end of this section.

%In the following, we will use the convention that $C$ denotes a
%generic positive constant
% depending  on $\mu$, $\lambda$, $\gamma$, $a$,
%$\bar{\n},$  $s,$ $N_0,$ and $M$, and  use $C(\al)$ to emphasize
%that $C$ depends on $\al.$

%We begin with the following   standard   energy estimate for
%$(\n,u)$ and preliminary    $L^2$ bounds for $\nabla u$ and
%$\n\dot{u}$.
\begin{lemma}\la{le2}
 Let $(\n,u)$ be a smooth solution of
 \eqref{a1}--\eqref{ch1} on $\O \times (0,T]  $ satisfying $\n\le 2 \hat{\n}.$
  Then there is a positive constant
  $C$ depending only  on $\mu$, $\lambda$, $\gamma$, $\hat{\n}$, and $\Omega$  such that
\be \la{a16} \sup_{0\le t\le T} \int
\left( \n |u|^2+(\n-\n_s)^2\right)dx + \int_0^{T} \|\na u\|_{L^2}^2  dt\le C(\hat\n)C_0 .\ee
\end{lemma}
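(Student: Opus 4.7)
The plan is to extract a conservation-law identity for the modified total energy $\tfrac{1}{2}\rho|u|^2+G(\rho,\rho_s)$, and then integrate in time to convert it into the stated inequality.

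First, I would test the second equation of \eqref{a1} against $u$ and integrate over $\Omega$. Rewriting the viscous operator as $\mu\nabla\times\curl u-(2\mu+\lambda)\nabla\div u$ via $-\Delta u=\nabla\times\curl u-\nabla\div u$, and then integrating by parts, both boundary integrals vanish: $\int_{\partial\Omega}(\curl u\times n)\cdot u\,dS=0$ because $\curl u\times n=0$, and $\int_{\partial\Omega}\div u(u\cdot n)\,dS=0$ because $u\cdot n=0$. After combining with the convective term and the continuity equation, the resulting kinetic-energy identity is
\begin{equation*}
\frac{d}{dt}\int\tfrac{1}{2}\rho|u|^2\,dx+\mu\|\curl u\|_{L^2}^2+(2\mu+\lambda)\|\div u\|_{L^2}^2=\int P(\rho)\,\div u\,dx+\int\rho u\cdot\nabla\psi\,dx,
\end{equation*}
where $2\mu+\lambda\ge 4\mu/3>0$ by \eqref{h3}.

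Next, I would compute the evolution of $\int G(\rho,\rho_s)\,dx$ so as to cancel the right-hand side. With $h(\rho)=\tfrac{a}{\gamma-1}\rho^{\gamma}$ (so $h''(\rho)=P'(\rho)/\rho$), a brief algebraic check gives $G(\rho,\rho_s)=h(\rho)-h(\rho_s)-h'(\rho_s)(\rho-\rho_s)$, the Bresch--Desjardins relative entropy. Since $\rho_s$ is time-independent, $\partial_t G=(h'(\rho)-h'(\rho_s))\rho_t=-(h'(\rho)-h'(\rho_s))\div(\rho u)$. Integrating by parts (with $u\cdot n=0$) and using $h''(\rho_s)\nabla\rho_s=\rho_s^{-1}\nabla P(\rho_s)=\nabla\psi$ from Remark \ref{rNns}, I obtain
\begin{equation*}
\frac{d}{dt}\int G(\rho,\rho_s)\,dx=-\int P(\rho)\,\div u\,dx-\int\rho u\cdot\nabla\psi\,dx.
\end{equation*}
Adding the two identities yields the exact conservation law
\begin{equation*}
\frac{d}{dt}\int\Bigl(\tfrac{1}{2}\rho|u|^2+G(\rho,\rho_s)\Bigr)dx+\mu\|\curl u\|_{L^2}^2+(2\mu+\lambda)\|\div u\|_{L^2}^2=0.
\end{equation*}

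Finally, I would integrate from $0$ to $T$. Because $0\le\rho\le 2\hat\rho$ by hypothesis and $\underline{\rho}\le\rho_s\le\bar\rho$ by Lemma \ref{lm1.0}, a Taylor expansion of $h$ around $\rho_s$ gives $G(\rho,\rho_s)\simeq(\rho-\rho_s)^2$ with constants depending only on $\hat\rho,\gamma,a,\underline{\rho},\bar\rho$; this controls both directions between $\int G\,dx$ and $\int(\rho-\rho_s)^2\,dx$, and in particular bounds the corresponding term in $C_0$. Lemma \ref{crle1}, applied with $u\cdot n=0$, supplies $\|\nabla u\|_{L^2}^2\le C(\|\div u\|_{L^2}^2+\|\curl u\|_{L^2}^2)$, so the dissipation controls $\int_0^T\|\nabla u\|_{L^2}^2\,dt$, giving \eqref{a16}. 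The only genuinely delicate point is the cancellation in step two: without the equilibrium identity $\nabla P(\rho_s)=\rho_s\nabla\psi$, the large potential force $\nabla\psi$ would produce an uncontrolled term on the right-hand side, and it is exactly this identity that motivates choosing $\rho_s$ (rather than a constant reference density) as the background state in the definition of $G$.
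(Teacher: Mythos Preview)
Your proof is correct and follows essentially the same route as the paper: derive the kinetic-energy identity by testing $(\ref{a1})_2$ against $u$ with the viscous operator rewritten in div--curl form, compute the evolution of $\int G(\rho,\rho_s)\,dx$ so that the pressure and force terms cancel exactly via $\nabla P(\rho_s)=\rho_s\nabla\psi$, add to obtain the exact energy law, and close with $G(\rho,\rho_s)\simeq(\rho-\rho_s)^2$ together with the div--curl estimate \eqref{tdu1}. The only cosmetic differences are that the paper writes the force term as $-\tfrac{a\gamma}{\gamma-1}\int\rho_s^{\gamma-1}\div(\rho u)\,dx$ and multiplies $(\ref{a1})_1$ directly by $G'(\rho)$, whereas you use the relative-entropy identity $G=h(\rho)-h(\rho_s)-h'(\rho_s)(\rho-\rho_s)$; these are equivalent computations.
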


\begin{proof}
%First,
%the standard energy inequality reads:
%\bnn
% \sup\limits_{0\le t\le T}\int\left(
%\frac{1}{2}\n|u|^2+\frac{P}{\ga-1}\right)dx+\int_0^T \int\left( \mu |\na u|^2+(\mu+\lambda )(\div u)^2\right)dxdt \le  C_0,
%\enn which together with \eqref{h3} shows \eqref{a16}.
First, since $$-\Delta u=-\nabla\div u+\nabla\times\curl u,$$ using \eqref{NSs}, we
rewrite $(\ref{a1})_2 $ as
\be\la{m1} \ba
\rho \dot{u} - (\lambda + 2\mu)\nabla\div u+\mu\nabla\times\curl u + \nabla P=\frac{a\gamma}{\gamma-1}\rho  \nabla\rho_s^{\gamma-1}.
\ea \ee
Multiplying \eqref{m1} by $u$ and integrating the resulting equality over $\Omega$ lead to
\be\la{m8} \ba
&\frac12\left(\int\rho |u|^{2}dx\right)_t + (\lambda + 2\mu)\int(\div u)^{2}dx + \mu\int|\curl u|^{2}dx\\&=   \int P\div udx-\frac{a\gamma}{\gamma-1}\int \rho_s^{\gamma-1} \div (\rho u)dx.
\ea \ee
 Multiplying $(\ref{a1})_1 $ by $G'(\rho)$ with $G(\n)$ as in \eqref{bz9}, integrating over $\Omega$ and applying slip boundary condition \eqref{ch1}, we have
\be\la{m0} \ba \left(\int G(\rho)dx\right)_t + \int P\div udx-\frac{a\gamma}{\gamma-1}\int \rho_s^{\gamma-1} \div (\rho u)dx=0.\ea \ee

Finally, it is easy to check that  there exists a positive constant $C=C(\underline{\n},\bar\n) $ such that
$$C^{-1}(\rho-\rho_s)^{2}\leq G(\rho)\leq C(\rho-\rho_s)^{2},$$
which together with \eqref{m8}, \eqref{m0}, and \eqref{tdu1}   gives \eqref{a16}.
\end{proof}

The following conclusion shows preliminary $L^{2}$ bounds for $\nabla u$ and $\rho^{1/2}\dot{u}$.
\begin{lemma}\la{xcrle1}
 Let $(\n,u)$ be a smooth solution of
 \eqref{a1}-\eqref{ch1} on $\Omega\times(0,T]$ satisfying $\n\le 2 \hat{\n}$.
  Then there is a positive constant
  $C $ depending only  on $\mu$, $\lambda$, $a$, $\gamma$, $\inf_{\overline{\Omega}}\psi$, $\|\psi\|_{H^2}$, $\hat{\rho}$ and $\Omega$ such that
  \be\la{h14}
  A_1(T) \le  C C_0 + C\int_0^{T}\int\sigma|\nabla u|^3dx dt,
  \ee
 and
  \be\la{h15}
    A_2(T)
    \le   C C_0 + CA_1(T)  + C\int_0^{T}\int \sigma^3 |\nabla u|^4 dxdt.
   \ee
\end{lemma}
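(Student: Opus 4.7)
The plan is to establish both inequalities by the classical multiplier technique. For \eqref{h14}, I would test the momentum equation $(\ref{a1})_2$ with $u_t$ (or equivalently with $\dot u$ modulo controllable convective corrections), multiply the resulting energy identity by $\sigma$, and integrate in time. For \eqref{h15}, I would apply the material derivative $D_t := \partial_t + u\cdot\nabla$ to $(\ref{a1})_2$, test the resulting equation for $\dot u$ against $\sigma^3 \dot u$, and integrate. In both derivations the slip boundary conditions $u\cdot n=0$ and $\curl u\times n=0$ allow the viscous terms to be integrated by parts into $\|\div\cdot\|_{L^2}^2 + \|\curl\cdot\|_{L^2}^2$, which (via Lemma \ref{crle1}) controls the full $H^1$ semi-norm; and the steady-state identity $\nabla P(\rho_s)=\rho_s\nabla\psi$ is used to combine the pressure and force terms into $-\nabla(P-P(\rho_s))+(\rho-\rho_s)\nabla\psi$ exactly as in \eqref{s3.60}.

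For \eqref{h14}: testing with $u_t$ and using $u_t\cdot n=0$, the viscous term produces $\tfrac{d}{dt}\bigl[\tfrac{\mu}{2}\|\curl u\|_{L^2}^2+\tfrac{2\mu+\lambda}{2}\|\div u\|_{L^2}^2\bigr]$, while the pressure term $-\int(P-P(\rho_s))\div u_t\,dx$ is converted into a time derivative using the identity $P_t=-u\cdot\nabla P-\gamma P\div u$ coming from $(\ref{a1})_1$. After Hölder, Gagliardo-Nirenberg (Lemma \ref{l1}), and $\rho\le 2\hat\rho$, this yields
\begin{equation*}
\frac{d}{dt}\Psi_1(t)+\tfrac12\|\rho^{1/2}u_t\|_{L^2}^2 \le C\|\nabla u\|_{L^3}^3+C\|\nabla u\|_{L^2}^2+C\|\rho-\rho_s\|_{L^2}^2,
\end{equation*}
where $\Psi_1\simeq \tfrac{\mu}{2}\|\curl u\|_{L^2}^2+\tfrac{2\mu+\lambda}{2}\|\div u\|_{L^2}^2-\int(P-P(\rho_s))\div u\,dx$ dominates $\|\nabla u\|_{L^2}^2$ modulo a lower-order $L^2$ piece. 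Multiplying by $\sigma$, integrating in $t$, integrating by parts on $\int_0^T\sigma\Psi_1'\,dt$, and invoking the basic energy estimate \eqref{a16} together with the working hypothesis $A_3(\sigma(T))\le 2K$ to control $\int_0^{\min(T,1)}\|\nabla u\|_{L^2}^2\,dt$, one obtains \eqref{h14} after converting $\|\rho^{1/2}u_t\|_{L^2}^2$ to $\|\rho^{1/2}\dot u\|_{L^2}^2$ via $\dot u=u_t+u\cdot\nabla u$, the convective correction being absorbable into $C\int_0^T\sigma|\nabla u|^3\,dx\,dt$.

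For \eqref{h15}: applying $D_t$ to $(\ref{a1})_2$ and using $(\ref{a1})_1$ to simplify $(\rho\dot u)_t+\div(\rho u\otimes\dot u)=\rho D_t\dot u$, one arrives at
\begin{equation*}
\rho(\dot u)_t+\rho u\cdot\nabla\dot u=\mu\Delta\dot u+(\mu+\lambda)\nabla\div\dot u+H,
\end{equation*}
where $H$ collects the commutators from $[D_t,\Delta]$, $[D_t,\nabla\div]$, $[D_t,\nabla P]$ and the $D_t(\rho\nabla\psi)$ contribution; schematically $|H|\lesssim|\nabla u||\nabla^2 u|+|\nabla u||\nabla P|+|\rho u||\nabla^2\psi|+\ldots$. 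Testing with $\sigma^3\dot u$, the LHS gives $\tfrac12\tfrac{d}{dt}(\sigma^3\|\rho^{1/2}\dot u\|_{L^2}^2)-\tfrac32\sigma^2\sigma'\|\rho^{1/2}\dot u\|_{L^2}^2$ and the viscous terms yield $-\sigma^3[(2\mu+\lambda)\|\div\dot u\|_{L^2}^2+\mu\|\curl\dot u\|_{L^2}^2]$ up to boundary corrections; after absorbing $\tfrac12\sigma^3\|\nabla\dot u\|_{L^2}^2$ on the left by Hölder/Gagliardo-Nirenberg and Lemmas \ref{le3} and \ref{uup1}, the remaining right-hand side is bounded by $C\sigma^3\int|\nabla u|^4\,dx+C\sigma^2\sigma'\|\rho^{1/2}\dot u\|_{L^2}^2+\text{lower order}$. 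Integrating in $t$ and noting $\sigma^2\sigma'\le\sigma$, so that $\int_0^T\sigma^2\sigma'\|\rho^{1/2}\dot u\|_{L^2}^2\,dt\le A_1(T)$, delivers \eqref{h15}.

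The principal technical obstacle is the treatment of the boundary integrals arising when integrating by parts the viscous operators against $\sigma^3\dot u$: since $\dot u$ is not tangential to $\partial\Omega$ in general, one cannot simply discard $\int_{\partial\Omega}(\cdots)\dot u\cdot n\,dS$. This is exactly what the identity $(\dot u+(u\cdot\nabla n)\times u^\perp)\cdot n=0$ recorded just after \eqref{pzw1} is designed to handle: it lets the non-tangential trace of $\dot u$ on $\partial\Omega$ be replaced by the correction $-((u\cdot\nabla n)\times u^\perp)\cdot n$, converting all boundary contributions into controllable volume integrals of type $|u|^2|\nabla u|$, which are absorbable via Gagliardo-Nirenberg and the smallness assumptions in \eqref{zz1}.
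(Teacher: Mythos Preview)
Your overall strategy is right—multiply \eqref{m11} by $\sigma\dot u$ (or $\sigma u_t$) for \eqref{h14}, and apply the material derivative and test against $\sigma^3\dot u$ for \eqref{h15}—and you have correctly identified the key boundary identity \eqref{pzw1}. Two points deserve care, however.

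\textbf{On \eqref{h14}.} Testing with $u_t$ (rather than $\dot u$, as the paper does) is a legitimate variant and actually avoids boundary terms entirely since $u_t\cdot n=0$ and $\curl u\times n=0$; the convective remainder $\|\rho^{1/2}u\cdot\nabla u\|_{L^2}^2\le C\|\nabla u\|_{L^2}^2\|\nabla u\|_{L^3}^2$ splits by Young into $\|\nabla u\|_{L^3}^3+\|\nabla u\|_{L^2}^6$, the latter handled by the a~priori bound $A_1(T)\le 2C_0^{1/2}$ in \eqref{zz1} together with \eqref{a16}. But your invocation of $A_3(\sigma(T))\le 2K$ is spurious: \eqref{a16} alone gives $\int_0^T\|\nabla u\|_{L^2}^2\,dt\le CC_0$, and using $A_3$ would make $C$ depend on $M$, contradicting the statement.

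\textbf{On \eqref{h15}.} Here there is a genuine gap in your organization. Writing the differentiated equation as $\rho D_t\dot u=\mu\Delta\dot u+(\mu+\lambda)\nabla\div\dot u+H$ and testing with $\dot u$ produces the boundary integrals $\int_{\partial\Omega}\div\dot u\,(\dot u\cdot n)\,ds$ and $\int_{\partial\Omega}(\curl\dot u\times n)\cdot\dot u\,ds$. The first involves the trace of $\div\dot u$, which you cannot control at this stage; the second does not vanish because $\curl\dot u\times n\neq 0$ (only $\curl u\times n=0$). The paper avoids both difficulties by \emph{not} passing to an equation for $\dot u$: it applies $\partial_t+\div(u\,\cdot)$ directly to the form $\rho\dot u=\nabla F-\mu\nabla\times\curl u+(\rho-\rho_s)\nabla\psi$ and only then tests with $\dot u$. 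The curl part then carries $\curl u_t$, and $\curl u_t\times n=(\curl u\times n)_t=0$ kills that boundary term. The divergence part yields $\int_{\partial\Omega}F_t\,\dot u\cdot n\,ds$, which via $\dot u\cdot n=-u\cdot\nabla n\cdot u$ and the time-derivative trick becomes $-\bigl(\int_{\partial\Omega}(u\cdot\nabla n\cdot u)F\,ds\bigr)_t$ plus terms involving $F$ (not $F_t$); these are converted to volume integrals using $u=u^\perp\times n$ on $\partial\Omega$ and the divergence theorem (see \eqref{bz8}--\eqref{bz4}). The resulting right-hand side is not merely ``$|u|^2|\nabla u|$'' as you suggest, but contains $\|\nabla u\|_{L^2}^6$, $\|\nabla u\|_{L^4}^4$, and $\|\rho^{1/2}\dot u\|_{L^2}^2(\|\nabla u\|_{L^2}^4+1)$; closing requires the a~priori bounds $A_1+A_2\le 2C_0^{1/2}$ from \eqref{zz1}, not just smallness in a vague sense.
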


\begin{proof}
 The proof is similar to that of \cite[Lemma 3.4]{CL1} expect for some modifications which are caused by the external force term. For convenience, we still write down the proof completely.  %Here we remind the reader that in the following discussion, we will apply integration by parts frequently and the two identities as follows   \be\la{divug}    \div (u\cdot \nabla u)=u\cdot \nabla (\div u)+\nabla u :\nabla u,    \ee  \be\la{curlug}     \curl (u\cdot \nabla u)=u\cdot \nabla (\curl u)+\nabla u^i \times\nabla_i u.    \ee

 Let $m\ge 0$ be a real number which will be determined later. Now we start with \eqref{h14}. $(\ref{a1})_2 $ can be rewritten as
 \be\la{m11} \ba
\rho \dot{u} - (\lambda + 2\mu)\nabla\div u+\mu\nabla\times\curl u + \nabla (P-P(\rho_s))=(\rho-\rho_s) \nabla\psi.
\ea \ee
 Multiplying it by
$\sigma^m \dot{u}$   and then integrating the resulting equality over
$\Omega$ lead  to
\be\la{I0} \ba  \int \sigma^m \rho|\dot{u}|^2dx &
= -\int\sigma^m \dot{u}\cdot\nabla (P-P(\rho_s))dx + (\lambda+2\mu)\int\sigma^m \nabla\div u\cdot\dot{u}dx \\
&\quad - \mu\int\sigma^m \nabla\times\curl u\cdot\dot{u}dx+\int\sigma^m (\rho-\rho_s) \nabla\psi\cdot \dot{u}dx\\
& \triangleq I_1+I_2+I_3+I_4. \ea \ee
We will estimate $I_1$, $I_2$, $I_3$ and $I_4$ one by one. Firstly, by $(\ref{a1})_1$, one can check that
 \be\la{Pu1} \ba
 P_t+\div(Pu)+(\gamma-1)P\div u=0,
 \ea \ee
 or
 \be\la{Pu2} \ba
 P_t+\nabla P\cdot u+\gamma P\div u=0.
 \ea \ee
A direct calculation together with $(\ref{Pu1})$ gives
\be\la{I1} \ba
I_1 = & - \int \sigma^m \dot{u}\cdot\nabla (P-P(\rho_s))dx  \\
= & \int\sigma^m(P-P(\rho_s))\,\div u_{t}\,dx - \int\sigma^mu\cdot\nabla u\cdot\nabla (P-P(\rho_s)) dx  \\
= & \left(\int\sigma^m(P-P(\rho_s))\,\div u\, dx\right)_{t} - m\sigma^{m-1}\sigma'\int(P-P(\rho_s))\,\div u\,dx \\
&+ \int\sigma^{m}P\nabla u:\nabla u dx + (\gamma-1)\int\sigma^{m}P(\div u)^{2}dx \\
&+\int\sigma^mu\cdot\nabla u\cdot\nabla P(\rho_s) dx - \int_{\partial\Omega}\sigma^{m}Pu\cdot\nabla u\cdot n ds\\
\le & \left(\int\sigma^m(P-P(\rho_s))\,\div u\, dx\right)_{t} +C\|\na u\|_{L^2}^2+Cm\sigma^{m-1}\sigma' C_0\\&- \int_{\partial\Omega}\sigma^{m}Pu\cdot\nabla u\cdot n ds\\
\leq &\left(\int\sigma^m(P-P(\rho_s))\,\div u\, dx\right)_{t} + C \|\nabla u\|_{L^{2}}^{2} +Cm\sigma^{m-1}\sigma' C_0. \ea \ee
where in the last inequality, we have utilized the fact that
\be\la{bdt1} \ba
- \int_{\partial\Omega}\sigma^{m}Pu\cdot\nabla u\cdot n ds&=\int_{\partial\Omega}\sigma^{m}Pu\cdot\nabla n\cdot uds \\
&\leq C\int_{\partial\Omega}\sigma^{m}|u|^{2}ds
%& =C\int_{\partial\Omega}\sigma^{m}u\cdot u^{\perp}\times nds \\
%& =C\int_{\partial\Omega}\sigma^{m}u\times u^{\perp}\cdot nds \\
%& =C\int\sigma^{m}\div(u\times u^{\perp})dx \\
%& \leq C\sigma^{m}\|\nabla u\|_{L^{2}}\|u\|_{L^{2}} \\
  \leq C\sigma^{m}\|\nabla u\|_{L^{2}}^{2},
\ea  \ee
due to \eqref{pzw1}.
%since $u=u^{\perp}\times n.$
Similarly, it indicates that
\be \la{bz13}\ba
I_2 & =  (\lambda+2\mu)\int\sigma^m \nabla\div u\cdot\dot{u}dx \\
& = (\lambda+2\mu)\int_{\partial\Omega}\sigma^m\div u\,(\dot{u}\cdot n)ds - (\lambda+2\mu)\int\sigma^m\div u\,\div \dot{u}dx  \\
& = -(\lambda+2\mu)\int_{\partial\Omega}\sigma^m\div u\,(u\cdot\nabla n\cdot u)ds - \frac{\lambda+2\mu}{2}\left(\int\sigma^{m}(\div u)^{2}dx\right)_{t} \\
&\quad +\frac{\lambda+2\mu}{2}\int\sigma^{m}(\div u)^{3}dx- (\lambda+2\mu)\int\sigma^m\div u\,\nabla u:\nabla udx  \\
&\quad + \frac{m(\lambda+2\mu)}{2}\sigma^{m-1}\sigma'\int(\div u)^{2}dx.
\ea  \ee
Notice that
\bnn \ba
&\left|\int_{\partial\Omega}\div u\,(u\cdot\nabla n\cdot u)ds\right| \\
&=\frac{1}{\lambda +2\mu}\left|\int_{\partial\Omega}(F+P-P(\rho_s))(u\cdot\nabla n\cdot u)ds\right| \\
& \leq C\left(\int_{\partial\Omega}|F||u|^{2}ds+ \int_{\partial\Omega}|u|^{2}ds\right)\\
& \leq C(\| F\|_{H^1} \|u\|_{H^1}^{2}+  \|\nabla u\|_{L^{2}}^2)\\
& \leq C(\|\na F\|_{L^{2}}+\|\nabla u\|_{L^{2}} +1)\|\nabla u\|_{L^2}^2\\
& \leq\frac{1}{2}\|\rho^{\frac{1}{2}}\dot{u}\|_{L^{2}}^{2}+C(\|\nabla u\|_{L^{2}}^{2}+\|\nabla u\|_{L^{2}}^{4}).
\ea  \enn
Therefore,
\be\la{I2} \ba
I_2 & \leq - \frac{\lambda+2\mu}{2}\left(\int\sigma^{m}(\div u)^{2}dx\right)_t +C\sigma^{m}\|\nabla u\|_{L^{3}}^{3}\\
&\quad +\frac{1}{2}\sigma^{m}\|\rho^{\frac{1}{2}}\dot{u}\|_{L^{2}}^{2}+C\sigma^{m}\|\nabla u\|_{L^{2}}^{4}+C\|\nabla u\|_{L^{2}}^{2}.\ea\ee

By \eqref{ch1}, we have
\be\la{I3}\ba
I_3 & = -\mu\int\sigma^{m}\nabla\times\curl u\cdot\dot{u}dx \\
& =  - \mu\int\sigma^{m}\curl u\cdot\curl\dot{u}dx \\
& = -\frac{\mu}{2}\left(\int\sigma^{m}|\curl u|^{2}dx\right)_t + \frac{\mu m}{2}\sigma^{m-1}\sigma'\int|\curl u|^{2}dx \\
& \quad - \mu\int\sigma^{m}\curl u\cdot\curl(u\cdot\nabla u)dx \\
& = -\frac{\mu}{2}\left(\int\sigma^{m}|\curl u|^{2}dx\right)_t + \frac{\mu m}{2}\sigma^{m-1}\sigma'\int|\curl u|^{2}dx   \\
& \quad - \mu\int\sigma^{m}(\nabla u^{i}\times\nabla_i u)\cdot\curl udx + \frac{\mu}{2}\int\sigma^{m}|\curl u|^{2}\,\div udx\\
& \leq -\frac{\mu}{2}\left(\int\sigma^{m}|\curl u|^{2}dx\right)_t + C \|\nabla u\|_{L^{2}}^{2} + C\sigma^{m}\|\nabla u\|_{L^{3}}^{3}.
\ea \ee
Finally,
\be\la{I44}\ba
I_4 & =\int\sigma^m (\rho-\rho_s) \nabla\psi\cdot \dot{u}dx \\
& =\left(\int\sigma^m(\rho-\rho_s)\,\nabla\psi\cdot u\, dx\right)_{t} - m\sigma^{m-1}\sigma'\int(\rho-\rho_s)\,\nabla\psi\cdot u\, dx \\
&\quad \int\sigma^m \rho u\cdot \nabla(\nabla\psi\cdot u)\, dx+\int\sigma^m(\rho-\rho_s)\,\nabla\psi\cdot (u\cdot\nabla u)\, dx\\
& \leq \left(\int\sigma^m(\rho-\rho_s)\,\nabla\psi\cdot u\, dx\right)_{t}+Cm\sigma^{m-1}\sigma'C_0+C\|u\|_{L^4}^2\|\nabla^2 \psi\|_{L^2}\\
&\quad+C\|u\|_{L^3}\|\nabla u\|_{L^2}\|\nabla\psi\|_{L^6}\\
&\leq \left(\int\sigma^m(\rho-\rho_s)\,\nabla\psi\cdot u\, dx\right)_{t}+C \|\nabla u\|_{L^{2}}^{2} +Cm\sigma^{m-1}\sigma' C_0.
\ea \ee
 It follows from  \eqref{I0}   and  \eqref{I1}-\eqref{I44} that
\be\la{I4}\ba
&\left((\lambda+2\mu)\int\sigma^{m}(\div u)^{2}dx+\mu\int\sigma^{m}|\curl u|^{2}dx\right)_{t}+\int\sigma^{m}\rho|\dot{u}|^{2}dx \\
& \leq \left(2\int\sigma^{m}(P-P(\rho_s))\,\div udx+2\int\sigma^m(\rho-\rho_s)\,\nabla\psi\cdot u\, dx \right)_{t}\\
& \quad+Cm\sigma^{m-1}\sigma' C_0+C\sigma^{m}\|\nabla u\|_{L^{2}}^{4}+C\|\nabla u\|_{L^{2}}^{2}+C\sigma^{m}\|\nabla u\|_{L^{3}}^{3} ,
\ea \ee
which together with \eqref{tdu1}, Lemma \ref{le2} and Young's inequality, yields that for any $m\ge 1$,
\be\la{I40}\ba
&\sigma^{m}\|\nabla u\|_{L^{2}}^{2}+\int_0^T\int\sigma^{m}\rho|\dot{u}|^{2}dxdt \\
& \leq CC_{0}+C\int_0^T\sigma^{m}\|\nabla u\|_{L^{2}}^{4}dt+C\int_0^T\sigma^{m}\|\nabla u\|_{L^{3}}^{3}dt.
\ea \ee
 Choosing $m=1,$ and by virtue of the assumption \eqref{zz1} and \eqref{a16}, we obtain $\eqref{h14}$.

It remains to prove  \eqref{h15}. In the discussion, we will utilize the following facts more than once, which are given \eqref{tb90} and \eqref{tb11}, that is,
\be \la{bz5} \|F\|_{H^1}+\|\curl u\|_{H^1}\le C(\|\n \dot u\|_{L^2}+\|\na  u\|_{L^2}+\|\n-\rho_s\|_{L^3})\ee
and
\be \la{bz6}\ba &\|\na F\|_{L^6}+\|\na\curl u\|_{L^6} \\&\le C(\| \dot u\|_{H^1}+ \|\na  u\|_{L^2}
+\|\rho-\rho_s\|_{L^\infty})\\
&\le C(  \|\nabla \dot u\|_{L^2}+\|\na  u\|_{L^2}+\|\na  u\|^2_{L^2}+\|\na u\|^2_{L^4}+1).\ea \ee

Rewrite \eqref{m11} as
\be\la{xdy1}\ba
\rho\dot{u}=\nabla F - \mu\nabla\times\curl u+(\rho-\rho_s) \nabla\psi.
\ea \ee
Operating $ \sigma^{m}\dot{u}^{j}[\pa/\pa t+\div
(u\cdot)] $ to $ (\ref{xdy1})^j,$ summing with respect to $j$, and integrating over $\Omega,$ together with $ (\ref{a1})_1 $, we get
\be\la{ax1}\ba &\left(\frac{\sigma^{m}}{2}\int\rho|\dot{u}|^{2}dx\right)_t -\frac{m}{2}\sigma^{m-1}\sigma'\int\rho|\dot{u}|^{2}dx \\
%& = -\int\sigma^{m}\dot{u}^{j}\,\div(\rho\dot{u}^{j}u)dx - \frac{1}{2}\int\sigma^{m}\rho_t|\dot{u}|^{2}dx\\ &\quad+\int\sigma^{m}(\dot{u}\cdot\nabla F_t+\dot{u}^{j}\,\div(u\partial_jF))dx  \\ &\quad-\mu\int\sigma^{m}(\dot{u}\cdot\nabla\times\curl u_t+\dot{u}^{j}\,\div((\nabla\times\curl u)^j\,u))dx \\
& = \int\sigma^{m}(\dot{u}\cdot\nabla F_t+\dot{u}^{j}\,\div(u\partial_jF))dx\\
&\quad+\mu\int\sigma^{m}(-\dot{u}\cdot\nabla\times\curl u_t-\dot{u}^{j}\div((\nabla\times\curl u)^j\,u))dx \\
&\quad+ \int\sigma^{m}(\rho_t\dot{u}\cdot\nabla \psi+\dot{u}^{j}\,\div((\rho-\rho_s)\partial_j\psi u))dx\\
& \triangleq J_1+\mu J_2+J_3.
\ea\ee

For $J_1,$ by \eqref{ch1} and \eqref{Pu2}, a direct computation yields
\be\la{ax2}\ba J_1 & =\int\sigma^{m}\dot{u}\cdot\nabla F_tdx+\int\sigma^{m}\dot{u}^{j}\div(u\partial_jF)dx \\
& = \int_{\partial\Omega}\sigma^{m}F_t\dot{u}\cdot nds-\int\sigma^{m}F_t\,\div\dot{u}dx- \int\sigma^{m}u\cdot \na \dot u^j\partial_jF   dx \\
& = \int_{\partial\Omega}\sigma^{m}F_t\dot{u}\cdot nds-(2\mu+\lm)\int\sigma^{m}(\div\dot  u)^2dx\\&\quad+(2\mu+\lm)\int\sigma^{m} \div\dot{u}\na u:\na u dx +\int\sigma^{m} \div\dot{u} u\cdot\na Fdx\\&\quad-\ga \int\sigma^{m} P\div u \div\dot{u}dx- \int\sigma^{m}u\cdot \na \dot u^j\partial_jF   dx
\\
& \le  \int_{\partial\Omega}\sigma^{m}F_t\dot{u}\cdot nds-(2\mu+\lm)\int\sigma^{m}(\div\dot  u)^2dx+\frac{\de}{2}\si^m \|\na\dot u\|_{L^2}^2\\&\quad+\si^m \|\na F\|_{L^2}^\frac{1}{2}\|\na F\|_{L^6}^\frac{1}{2}\|\na\dot u\|_{L^2}\| \nabla u\|_{L^2}+C(\de)\si^m(\|\na u\|_{L^4}^4+\|\na u\|_{L^2}^2)
\\
& \le  \int_{\partial\Omega}\sigma^{m}F_t\dot{u}\cdot nds-(2\mu+\lm)\int\sigma^{m}(\div\dot  u)^2dx+\de\si^m \|\na\dot u\|_{L^2}^2\\&\quad+C(\de)\si^m\left(\|\na u\|_{L^2}^4 \|\rho \dot{u}\|_{L^2}^2+\|\na u\|_{L^4}^4+\|\na u\|_{L^2}^2\right)\ea\ee
 where in the third   equality we have used
\bnn\ba F_t&=(2\mu+\lm)\div u_t-P_t\\&=(2\mu+\lm)\div\dot  u-(2\mu+\lm)\div(u\cdot\na u) +u\cdot\na P+\ga P\div u\\&=(2\mu+\lm)\div\dot  u-(2\mu+\lm)\na u:\na u  - u\cdot\na F+\ga P\div u.\ea\enn
For the first  term on the righthand side of \eqref{ax2}, we have
\be\la{bz8}\ba
&\int_{\partial\Omega}\sigma^{m}F_t\dot{u}\cdot nds\\&=-\int_{\partial\Omega}\sigma^{m}F_t\,(u\cdot\nabla n\cdot u)ds \\
& = -\left(\int_{\partial\Omega}\sigma^{m}(u\cdot\nabla n\cdot u)Fds\right)_t+m\sigma^{m-1}\sigma'\int_{\partial\Omega}(u\cdot\nabla n\cdot u)Fds\\
&\quad+\sigma^{m} \int_{\partial\Omega}F \dot{u}\cdot\na n\cdot uds +\sigma^{m} \int_{\partial\Omega}F {u}\cdot\na n\cdot \dot uds \\
&\quad  -\sigma^{m} \int_{\partial\Omega}F  ({u}\cdot\na)u\cdot\na n\cdot uds  -\sigma^{m} \int_{\partial\Omega}F  u\cdot\na n\cdot ({u}\cdot\na) uds  \\
& \le  -\left(\int_{\partial\Omega}\sigma^{m}(u\cdot\nabla n\cdot u)Fds\right)_t+Cm\si'\si^{m-1}\|\na u\|_{L^2}^2\|F\|_{H^1}\\&\quad +\de\si^m \|\dot u\|_{H^1}^2+C(\de)\si^{m }\|\na u\|_{L^2}^2\|F\|^2_{H^1}  \\
&\quad -\sigma^{m} \int_{\partial\Omega}F  ({u}\cdot\na)u\cdot\na n\cdot uds-\sigma^{m} \int_{\partial\Omega}F  u\cdot\na n\cdot ({u}\cdot\na) uds, \ea\ee
 where in the last inequality we have used \be \la{Fnn1} \left|\int_{\partial\Omega}(u\cdot\nabla n\cdot u)Fds \right|\le C\|\na u\|_{L^2}^2\|F\|_{H^1}.\ee

Since $u\cdot n|_{\pa\O}=0,$ we have \be  \la{bz2}  u=-(u\times n)\times n= u^\bot\times n \mbox{ on } \pa\O,\ee  with  $u^\bot\triangleq-u\times n.$ And then \be \la{bz3}\ba &- \int_{\partial\Omega}F  ({u}\cdot\na)u\cdot\na n\cdot uds \\&= -\int_{\partial\Omega}   u^\bot\times n \cdot\na u^i \nabla_i n\cdot u Fds \\&= - \int_{\partial\Omega}n\cdot ( \na u^i \times  u^\bot)    \nabla_i n\cdot u Fds\\&= - \int_{ \Omega}\div( ( \na u^i \times  u^\bot)   \nabla_i n\cdot uF)dx \\&= - \int_{ \Omega}\na (\nabla_i n\cdot uF) \cdot ( \na u^i \times  u^\bot)   dx  + \int_{ \Omega}   \na u^i \cdot \na\times  u^\bot     \nabla_i n\cdot u  F     dx \\& \le C \int_\O |\na F||\na u||u|^2dx+C \int_\O |F| (|\na u|^2|u|+|\na u||u|^2)dx
\\& \le C  \|\na F\|_{L^6} \|\na u\|^3_{L^2}+ C  \|  F\|_{H^1}\|\na u\|_{L^2} \left(\|\na u\|^2_{L^4} +\|\na u\|^2_{L^2}\right) \\& \le \de \|\na \dot{u}\|_{L^2}^2+C(\de) \|\na u\|^6_{L^2}+C\|\na u\|^4_{L^4}+C \|\na u\|^2_{L^2}\\&\quad+ C  \| \rho \dot{u}\|_{L^2}^2 \left( \|\na u\|^2_{L^2}+1\right), \ea\ee
where in the fourth equality we have used $$\div(   \na u^i \times  u^\bot)=-\na u^i\cdot \na\times u^\bot. $$

Similarly, we have
\be \la{bz4}\ba &-  \int_{\partial\Omega}F  u\cdot\na n\cdot ({u}\cdot\na) uds\\& \le C  \|\na F\|_{L^6} \|\na u\|^3_{L^2}+ C  \|  F\|_{H^1}\|\na u\|_{L^2} \left(\|\na u\|^2_{L^4} +\|\na u\|^2_{L^2}\right) \\& \le \de \|\na \dot{u}\|_{L^2}^2+C(\de) \|\na u\|^6_{L^2}+C\|\na u\|^4_{L^4}+C \|\na u\|^2_{L^2} \\&\quad+ C  \| \rho \dot{u}\|_{L^2}^2 \left( \|\na u\|^2_{L^2}+1\right).  \ea\ee
  It follows from \eqref{ax2}, \eqref{tb90}, \eqref{a16}, \eqref{bz8},   \eqref{bz3}, and \eqref{bz4} that
\be\la{ax399}\ba
J_1 & \leq Cm\sigma^{m-1}\sigma'(\|\rho^{\frac{1}{2}}\dot{u}\|_{L^2}^2+\|\nabla u\|_{L^2}^2+\|\nabla u\|_{L^2}^4)\\
&\quad-\left(\int_{\partial\Omega}\sigma^{m}(u\cdot\nabla n\cdot u)Fds\right)_t+C\delta\sigma^{m}\|\nabla\dot{u}\|_{L^2}^2\\
&\quad+C(\delta)\sigma^{m}\|\rho^{\frac{1}{2}}\dot{u}\|_{L^2}^2(\|\nabla u\|_{L^2}^4+1)- (\lambda+2\mu)\int\sigma^{m}(\div\dot{u})^{2}dx\\
& \quad+C(\delta)\sigma^{m}(\|\nabla u\|_{L^2}^2+\|\nabla u\|_{L^2}^6+\|\nabla u\|_{L^4}^4).
\ea\ee

Observing that $\curl u_t=\curl \dot u-u\cdot \na \curl u-\na u^i\times \nabla_iu,$ we get
\be\la{ax3999}\ba
J_2
&=- \int\sigma^{m}|\curl\dot{u}|^{2}dx+\int\sigma^{m}\curl\dot{u}\cdot(\nabla u^i\times\nabla_i u) dx \\&\quad+\int\sigma^{m} u\cdot\na  \curl u \cdot\curl\dot{u} dx  +\int\sigma^{m}    u \cdot \na \dot{u}\cdot (\nabla\times\curl u) dx  \\
&\le - \int\sigma^{m}|\curl\dot{u}|^{2}dx+\de  \sigma^{m}\|\na \dot u\|_{L^2}^2 +\de  \sigma^{m}\|\na \curl u\|_{L^6}^2\\&\quad +C(\de) \sigma^{m} \|\na u\|_{L^4}^4 +C(\de)  \sigma^{m}\|\na  u\|_{L^2}^4\|\na \curl u\|_{L^2}^2.\ea\ee
Finally,
\be\la{ax4000}\ba
J_3&=-\int\sigma^{m}(\rho u\cdot\nabla(\dot{u}\cdot\nabla \psi)+(\rho-\rho_s) (u\cdot\nabla\dot{u})\cdot\nabla\psi)dx\\
&\leq \sigma^{m}\|u\|_{L^3}\|\nabla\dot{u}\|_{L^2}\|\nabla\psi\|_{L^6}+\sigma^{m}\|u\|_{L^3}\|\dot{u}\|_{L^6}\|\nabla^2\psi\|_{L^2}\\
&\leq \delta\sigma^{m}\|\nabla\dot{u}\|_{L^2}^2+C(\delta)\|\nabla u\|_{L^2}^2.
\ea\ee
Putting \eqref{ax399}-\eqref{ax4000} into  \eqref{ax1} gives
\be\la{ax40}\ba
&\left(\frac{\sigma^{m}}{2}\|\rho^{\frac{1}{2}}\dot{u}\|_{L^2}^2\right)_t+(\lambda+2\mu)\sigma^{m}\|\div\dot{u}\|_{L^2}^2+\mu\sigma^{m}\|\curl\dot{u}\|_{L^2}^2\\
& \leq Cm\sigma^{m-1}\sigma'(\|\rho^{\frac{1}{2}}\dot{u}\|_{L^2}^2 +\|\nabla u\|_{L^2}^2+\|\nabla u\|_{L^2}^4)+2\delta\sigma^{m}\|\nabla\dot{u}\|_{L^2}^2\\
&\quad-\left(\int_{\partial\Omega}\sigma^{m}(u\cdot\nabla n\cdot u)Fds\right)_t+C\sigma^{m} \|\rho^{\frac{1}{2}}\dot{u}\|_{L^2}^2(\|\nabla u\|_{L^4}^2+1) \\
& \quad+C(\delta)\sigma^{m}(\|\nabla u\|_{L^2}^2+\|\nabla u\|_{L^2}^6+\|\nabla u\|_{L^4}^4),
\ea\ee which together with
 \eqref{tb11}  leads to
\be\la{ax401}\ba
&\left(\sigma^{m}\|\rho^{\frac{1}{2}}\dot{u}\|_{L^2}^2\right)_t +(\lambda+2\mu)\sigma^{m}\|\div\dot{u}\|_{L^2}^2+\mu\sigma^{m}\|\curl\dot{u}\|_{L^2}^2\\
& \leq Cm\sigma^{m-1}\sigma'(\|\rho^{\frac{1}{2}}\dot{u}\|_{L^2}^2+\|\nabla u\|_{L^2}^2+\|\nabla u\|_{L^2}^4)\\
&\quad-2\left(\int_{\partial\Omega}\sigma^{m}(u\cdot\nabla n\cdot u)Fds\right)_t +C\sigma^{m}\|\rho^{\frac{1}{2}}\dot{u}\|_{L^2}^2(\|\nabla u\|_{L^2}^2+1)\\
&\quad+C\sigma^{m}(\|\nabla u\|_{L^2}^2+\|\nabla u\|_{L^2}^6+\|\nabla u\|_{L^4}^4).
\ea\ee
Combining this,  \eqref{Fnn1}, and \eqref{zz1}  gives \eqref{h15} by taking $m=3$ in \eqref{ax401}. We finish the proof of Lemma \ref{xcrle1}.
\end{proof}
\begin{lemma}\la{a3c} Assume that $(\n,u)$ is a smooth solution of
 \eqref{a1}-\eqref{ch1} satisfying $\n\le 2 \hat{\n}$ and the initial data condition $\|u_0\|_{H^1}\leq M$ in \eqref{dt2}, then there exist  positive constants $K$ and $\varepsilon_1$ depending only on $\mu$, $\lambda$, $\ga$, $a$, $\inf\limits_{\overline{\Omega}}\psi$, $\|\psi\|_{H^2}$, $\hat{\rho}$, $\Omega$ and $M$ such that
   \be\la{uvk}  A_3(\sigma(T))+\int_0^{\sigma(T)}\|\rho^{1/2}\dot u\|_{L^2}^2dt\le
K, \ee
provide that $C_0<\varepsilon_1$.
\end{lemma}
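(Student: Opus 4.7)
The plan is to revisit the energy identity \eqref{I4} with the weight $m=0$ rather than $m=1$, integrate it over $[0,t]$ for $t\le\sigma(T)\le 1$, and use the unweighted initial bound $\|\nabla u_0\|_{L^2}\le M$ from \eqref{dt2} as the starting value. Setting $m=0$ in \eqref{I4} and integrating in time, the left-hand side produces $\|\nabla u(t)\|_{L^2}^{2}+\int_{0}^{t}\|\rho^{1/2}\dot u\|_{L^2}^{2}\,ds$ (up to the elliptic constant from \eqref{tdu1}), while the right-hand side gives three groups of terms: (i) the evaluated ``$(\cdot)_t$'' boundary contributions $\int(P-P(\rho_s))\div u\,dx$ and $\int(\rho-\rho_s)\nabla\psi\cdot u\,dx$ at $t=0$ and at $t$, (ii) the bulk terms $\int_{0}^{t}(\|\nabla u\|_{L^2}^{2}+\|\nabla u\|_{L^2}^{4}+\|\nabla u\|_{L^3}^{3})\,ds$, and (iii) no $Cm\sigma^{m-1}\sigma' C_0$ contribution at all since $m=0$.

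The boundary-type contributions in (i) will be handled by Cauchy-Schwarz combined with the energy estimate \eqref{a16}: at $t=0$ one has $\|P(\rho_0)-P(\rho_s)\|_{L^2}+\|\rho_0-\rho_s\|_{L^2}\le CC_0^{1/2}$, so these terms are bounded by $CC_0^{1/2}\,\|u_0\|_{H^1}\le CM C_0^{1/2}$; at time $t$ they can be split as $\eta\|\nabla u(t)\|_{L^2}^{2}+C(\eta)C_0$ with $\eta$ small, absorbed into the left-hand side. The cubic term $\|\nabla u\|_{L^3}^3$ is bounded through \eqref{h18} with $p=3$ (Gagliardo–Nirenberg plus the effective flux estimates of Lemma \ref{le3}) by
\[
 \|\nabla u\|_{L^3}^{3}\le \eta\,\|\rho^{1/2}\dot u\|_{L^2}^{2}+C\bigl(\|\nabla u\|_{L^2}^{6}+\|\nabla u\|_{L^2}^{3}+1\bigr),
\]
after Young's inequality and using $\rho\le 2\hat\rho$; the $\eta$-term is again absorbed.

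The heart of the argument — and the point at which the proof would otherwise fail — is that the remaining nonlinear bulk terms are \emph{superlinear} in $\phi(t)\triangleq\|\nabla u(t)\|_{L^2}^{2}$, so a pure Gronwall argument with $\phi\le 2K$ only yields a bound of the form $C(M)\exp(CK^2)$, which cannot be closed by choosing $K$. The decisive observation is that under the bootstrap hypothesis \eqref{zz1} together with Lemma \ref{le2}, one already has the \emph{smallness} $\int_{0}^{\sigma(T)}\|\nabla u\|_{L^2}^{2}\,ds\le CC_0$, independent of $K$. Consequently, after inserting the a priori bound $\phi\le 2K$ into the higher powers ($\phi^3\le 4K^2\phi$, $\phi^2\le 2K\phi$, $\phi^{3/2}\le C K^{1/2}\phi$), each of the bulk integrals is majorized by $C(K)\int_{0}^{t}\phi\,ds\le C(K)C_0$.

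Putting these pieces together, the integrated inequality reads
\[
 \phi(t)+\tfrac{1}{2}\int_{0}^{t}\|\rho^{1/2}\dot u\|_{L^2}^{2}\,ds\le C_\ast M^{2}+C_\ast+C_\ast\bigl(1+K^{2}\bigr)\,C_0 \qquad\text{for } 0\le t\le\sigma(T),
\]
for an absolute constant $C_\ast$ depending only on the data listed in the lemma. Choosing $K\triangleq 2(C_\ast M^{2}+C_\ast)$ and then $\varepsilon_1$ so small that $C_\ast(1+K^{2})\,\varepsilon_1\le K/2$ yields $\phi(t)\le K$ and the companion dissipation estimate, which is precisely \eqref{uvk}. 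The main obstacle, as explained, is the presence of the supercubic terms $\|\nabla u\|_{L^2}^{6}$ generated by the cubic term $\|\nabla u\|_{L^3}^{3}$; overcoming it requires genuinely using the $\int\|\nabla u\|_{L^2}^{2}\,ds\le CC_0$ bound from the basic energy identity rather than the pointwise $A_3$-assumption alone.
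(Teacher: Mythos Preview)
Your proposal is correct and follows essentially the same route as the paper's proof: set $m=0$ in \eqref{I4}, integrate over $[0,\sigma(T)]$, control the cubic term $\|\nabla u\|_{L^3}^3$ via \eqref{h18} and Young's inequality, and close the bootstrap by combining the a priori bound $A_3(\sigma(T))\le 2K$ with the smallness $\int_0^{\sigma(T)}\|\nabla u\|_{L^2}^2\,dt\le CC_0$ from Lemma \ref{le2}. The only cosmetic difference is that the paper keeps track of the $\|\rho-\rho_s\|_{L^3}^3$ contribution as an $O(C_0)$ term (via \eqref{a16}) rather than bounding it by a harmless constant over $[0,\sigma(T)]$, but this has no effect on the closing argument or the choice of $K$ and $\varepsilon_1$.
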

\begin{proof}
Choosing $m=0$ in \eqref{I4} and integrating over $(0,\sigma(T))$, we deduce from \eqref{tdu1}, \eqref{h18} and \eqref{a16} that
\bnn \ba &A_3(\sigma(T))+\int_0^{\sigma(T)}\|\rho^{1/2}\dot u\|_{L^2}^2dxdt\\ &\leq \frac{1}{2}C_1(C_0+M)+ \frac{1}{2}\int_0^{\sigma(T)}\|\rho^{1/2}\dot u\|_{L^2}^2dxdt\\
&\quad+\frac{1}{2}C_2C_0 A_3(\sigma(T))(A_3(\sigma(T))+1)
,\ea \enn
where we have used the fact that
\bnn\la{u3s} \ba&
\int_0^{\sigma(T)}\|\nabla u\|_{L^3}^3dt\\&\leq C\int_0^{\sigma(T)}\left(\|\nabla u\|_{L^2}^\frac{3}{2}\|\rho^{1/2} \dot u\|_{L^2}^\frac{3}{2}+\|\nabla u\|_{L^2}^2+\|\nabla u\|_{L^2}^4+\|\rho-\rho_s\|_{L^3}^3\right )dt\\
&\leq CC_0+CC_0 A_3(\sigma(T))(A_3(\sigma(T))+1)+\frac{1}{2}\int_0^{\sigma(T)}\|\rho^{1/2}\dot u\|_{L^2}^2dxdt.
\ea\enn
Hence,
\bnn \ba& A_3(\sigma(T))+\int_0^{\sigma(T)}\|\rho^{1/2}\dot u\|_{L^2}^2dxdt\\&\le C_1(C_0+M)+C_2C_0 A_3(\sigma(T))(A_3(\sigma(T))+1).\ea \enn
Now we can choose a positive constant $K$ such that $K\geq 2C_1(M+1)$, as a result, if $A_3(\sigma(T))<2K$ and $C_0<\varepsilon_1=\min\{1,1/(8(K+1)C_2)\}$, then we establish \eqref{uvk}.
\end{proof}
\begin{lemma}\la{zc1} Assume that $(\n,u)$ is a smooth solution of
 \eqref{a1}-\eqref{ch1} satisfying \eqref{zz1} with $K$ given by Lemma \ref{a3c} and the initial data condition $\|u_0\|_{H^1}\leq M$ in \eqref{dt2}. Then there exists a positive constant  $C$   depending only on $\mu$, $\lambda$, $\ga$, $a$, $\inf\limits_{\overline{\Omega}}\psi$, $\|\psi\|_{H^2}$, $\hat{\rho}$, $\Omega$ and $M$ such that
   \be\la{uv1}  \sup_{0\le t\le T}\|\na
u\|_{L^2}^2+\int_0^{T}\|\rho^{1/2}\dot u\|_{L^2}^2dt\le
C, \ee
 \be\la{uv2}  \sup_{0\le t\le T}\sigma\|\rho^{1/2}\dot u\|_{L^2}^2+\int_0^{T}\sigma\|\nabla\dot{u}\|_{L^2}^2dt\le
C, \ee
provide that $C_0<\varepsilon_1$.
\end{lemma}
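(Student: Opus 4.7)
\textbf{Proof plan for Lemma \ref{zc1}.} The plan is to prove the two bounds by splitting the interval $[0,T]$ into the initial layer $[0,\sigma(T)]$ and the later regime $[\sigma(T),T]$ (the latter being non-trivial only when $T>1$, in which case $\sigma(t)\equiv 1$ there), and using different ingredients on each piece.

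For \eqref{uv1}, on the initial layer Lemma \ref{a3c} already provides $A_3(\sigma(T))\le K$ together with $\int_0^{\sigma(T)}\|\rho^{1/2}\dot u\|_{L^2}^2\,dt\le K$, so both quantities in \eqref{uv1} are controlled there. On $[\sigma(T),T]$ the weight $\sigma$ equals $1$, and the a priori hypothesis $A_1(T)\le 2C_0^{1/2}$ from \eqref{zz1} gives immediately $\|\nabla u(t)\|_{L^2}^2\le 2C_0^{1/2}$ and $\int_{\sigma(T)}^{T}\|\rho^{1/2}\dot u\|_{L^2}^2\,dt\le 2C_0^{1/2}$. Combining these bounds yields \eqref{uv1}.

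For \eqref{uv2}, the late regime $[\sigma(T),T]$ is again immediate since $\sigma=\sigma^3=1$ there, so $A_2(T)\le 2C_0^{1/2}$ directly controls both $\sigma\|\rho^{1/2}\dot u\|_{L^2}^2$ and $\int_{\sigma(T)}^T\sigma\|\nabla\dot u\|_{L^2}^2\,dt$. The work lies in the initial layer $[0,\sigma(T)]$, where the $\sigma^3$-weighted information furnished by $A_2(T)$ must be upgraded to a $\sigma$-weighted one. For this I return to the differential inequality \eqref{ax401} established in the proof of Lemma \ref{xcrle1}, but with the exponent $m=1$ instead of $m=3$, and integrate in $t$ from $0$ to $\sigma(T)$ (the boundary term at $t=0$ vanishing since $\sigma(0)=0$). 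This produces
\bnn
\ba
&\sigma\|\rho^{1/2}\dot u\|_{L^2}^2\Big|_{t=\sigma(T)}+\int_0^{\sigma(T)}\sigma\bigl(\|\div\dot u\|_{L^2}^2+\|\curl\dot u\|_{L^2}^2\bigr)\,dt \\
&\le C\int_0^{\sigma(T)}\bigl(\|\rho^{1/2}\dot u\|_{L^2}^2+\|\nabla u\|_{L^2}^2+\|\nabla u\|_{L^2}^4\bigr)\,dt -2\Bigl[\sigma\!\int_{\pa\Omega}(u\cdot\na n\cdot u)F\,ds\Bigr]_0^{\sigma(T)}\\
&\quad+C\int_0^{\sigma(T)}\sigma\|\rho^{1/2}\dot u\|_{L^2}^2\bigl(\|\nabla u\|_{L^2}^2+1\bigr)\,dt+C\int_0^{\sigma(T)}\sigma\bigl(\|\nabla u\|_{L^2}^2+\|\nabla u\|_{L^2}^6+\|\nabla u\|_{L^4}^4\bigr)\,dt.
\ea
\enn
The zero-order time integrals and the factor $\|\nabla u\|_{L^2}^2$ in the third integral are controlled by the bounds $A_3(\sigma(T))\le K$ and $\int_0^{\sigma(T)}\|\rho^{1/2}\dot u\|_{L^2}^2\,dt\le K$ from Lemma \ref{a3c}. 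The boundary term is estimated via \eqref{Fnn1}, together with $\|F\|_{H^1}\le C(\|\rho\dot u\|_{L^2}+\|\nabla u\|_{L^2}+\|\rho-\rho_s\|_{L^3})$ from \eqref{bz5}, and then absorbed by a small fraction of $\sigma\|\rho^{1/2}\dot u\|_{L^2}^2$ using Young's inequality.

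The main obstacle is closing the absorption argument for the term $\sigma\|\nabla u\|_{L^4}^4$ on $[0,\sigma(T)]$. Applying \eqref{h18} with $p=4$ yields $\|\nabla u\|_{L^4}^4\le C\|\rho^{1/2}\dot u\|_{L^2}^3\|\nabla u\|_{L^2}+C(\|\nabla u\|_{L^2}^4+\|\rho-\rho_s\|_{L^3}^4+\|\rho-\rho_s\|_{L^4}^4)$. Because the resulting cubic power of $\|\rho^{1/2}\dot u\|_{L^2}$ exceeds the quadratic integrability available a priori, I would write $\sigma\|\rho^{1/2}\dot u\|_{L^2}^3=(\sigma\|\rho^{1/2}\dot u\|_{L^2}^2)^{1/2}\cdot\|\rho^{1/2}\dot u\|_{L^2}^2$ and split via Young's inequality, producing an $\varepsilon\sup_{[0,\sigma(T)]}\sigma\|\rho^{1/2}\dot u\|_{L^2}^2$ term that can be absorbed into the left-hand side, and a $\|\rho^{1/2}\dot u\|_{L^2}^2\cdot\|\nabla u\|_{L^2}^2$ remainder that is controlled by $K\cdot\int_0^{\sigma(T)}\|\rho^{1/2}\dot u\|_{L^2}^2\,dt\le K^2$. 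After this absorption, combined with the corresponding control on $\int_0^{\sigma(T)}\sigma\|\nabla\dot u\|_{L^2}^2\,dt$ obtained from \eqref{tb11} applied to the dissipation on the left-hand side, one closes \eqref{uv2}.
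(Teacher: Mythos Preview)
Your approach is essentially the same as the paper's: \eqref{uv1} follows directly from Lemma \ref{a3c} and the hypothesis \eqref{zz1}, and \eqref{uv2} comes from taking $m=1$ in \eqref{ax401}, handling the boundary term via \eqref{Fnn1} and \eqref{bz5}, and closing the $\sigma\|\nabla u\|_{L^4}^4$ term by factoring out $(\sigma\|\rho^{1/2}\dot u\|_{L^2}^2)^{1/2}$ and absorbing. Two small cleanups: your displayed identity should read $\sigma\|\rho^{1/2}\dot u\|_{L^2}^3\le(\sigma\|\rho^{1/2}\dot u\|_{L^2}^2)^{1/2}\|\rho^{1/2}\dot u\|_{L^2}^2$ (an inequality, using $\sigma\le1$), and the paper simply integrates over $[0,T]$ rather than splitting, arriving at $\sup\sigma\|\rho^{1/2}\dot u\|_{L^2}^2\le C+C(\sup\sigma\|\rho^{1/2}\dot u\|_{L^2}^2)^{1/2}$ and closing algebraically---your Young-inequality variant is equivalent.
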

\begin{proof} \eqref{uv1} is an immediate result of \eqref{uvk} and \eqref{zz1}. It remains to prove \eqref{uv2}.

Choosing $m=1$ in \eqref{ax401}, by \eqref{zz1}, \eqref{tb11}, \eqref{a16}, \eqref{bz5}, \eqref{Fnn1}  and \eqref{uv1},
\bnn\ba  &\sigma\|\rho^{1/2}\dot u\|_{L^2}^2+\int_0^{T}\sigma\|\nabla\dot{u}\|_{L^2}^2dt\\x&\le
C+\frac{1}{2}\sigma\|\rho^{1/2}\dot u\|_{L^2}^2+\int_0^{T}\sigma\|\nabla u\|_{L^4}^4dt\\
&\leq C+C\int_0^T\sigma\|\rho^{1/2} \dot u\|_{L^2}^3\|\nabla u\|_{L^2}dt+\frac{1}{2}\sigma\|\rho^{1/2}\dot u\|_{L^2}^2\\
&\leq C+C\sup_{0\le t\le T}(\sigma\|\rho^{1/2}\dot u\|_{L^2}^2)^\frac{1}{2}+\frac{1}{2}\sigma\|\rho^{1/2}\dot u\|_{L^2}^2, \ea\enn
which gives \eqref{uv2}.
\end{proof}
\begin{lemma}\la{rho0} Let $(\n,u)$ be a smooth solution  of
   \eqref{a1}-\eqref{ch1} on $\O \times (0,T] $ satisfying \eqref{zz1}. Then there  exists a positive constant $\varepsilon_2$ depending only  on $\mu$,  $\lambda$,  $\gamma$, $a$, $\underline{\rho}$, $\overline{\rho}$ and $\hat{\rho}$ and $\Omega$ such that
   \be\la{e5}\int_0^T\|\rho-\rho_s\|_{L^2}^{2}dt\leq CC_0.\ee
\end{lemma}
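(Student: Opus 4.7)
The strategy is a Bogovski\u{\i}-type test-function argument. Since $\int_\Omega(\rho-\rho_s)\,dx=0$ by mass conservation combined with the normalization in \eqref{NSs}, Lemma~\ref{th00} produces $v=\mathcal{B}[\rho-\rho_s]\in W^{1,p}_0(\Omega)^3$ with $\operatorname{div} v=\rho-\rho_s$ and $\|v\|_{W^{1,p}_0}\leq C\|\rho-\rho_s\|_{L^p}$ for every $p\in(1,\infty)$. Testing the momentum equation against $v$ will let me extract the coercive quantity $\int(P-P(\rho_s))(\rho-\rho_s)\,dx$, which, by convexity of $P$ together with the pointwise bounds $\rho\in[0,2\hat\rho]$ and $\rho_s\in[\underline\rho,\bar\rho]$, dominates $c\|\rho-\rho_s\|_{L^2}^2$ for some $c>0$ depending only on $a,\gamma,\underline\rho,\hat\rho$.

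Using the conservative form $(\rho u)_t+\operatorname{div}(\rho u\otimes u)+\nabla P-\rho\nabla\psi=\mu\Delta u+(\mu+\lambda)\nabla\operatorname{div} u$, taking the $L^2$-pairing with $v$, integrating by parts (using $v|_{\partial\Omega}=0$), and cancelling the $P(\rho_s)(\rho-\rho_s)$ contributions via the steady-state relation $\nabla P(\rho_s)=\rho_s\nabla\psi$, I obtain an identity of the form
\[ \int(P-P(\rho_s))(\rho-\rho_s)\,dx = \frac{d}{dt}\int\rho u\cdot v\,dx + \mathcal{R}(t) - \int(\rho-\rho_s)\nabla\psi\cdot v\,dx, \]
where $\mathcal{R}(t)$ collects the standard terms $-\int\rho u\cdot v_t$, $\int\rho u\otimes u:\nabla v$, $\mu\int\nabla u:\nabla v$, and $(\mu+\lambda)\int\operatorname{div} u\,(\rho-\rho_s)$. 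After integrating over $[0,T]$, the boundary contribution is bounded by $2\sup_t\|\rho^{1/2}u\|_{L^2}\cdot\sup_t\|\rho-\rho_s\|_{L^2}\leq CC_0$ via Lemma~\ref{le2}. For the $v_t$-integral, I note $v_t=-\mathcal{B}[\operatorname{div}(\rho u)]$ (valid since $\rho u\cdot n|_{\partial\Omega}=0$), so part~3 of Lemma~\ref{th00} gives $\|v_t\|_{L^2}\leq C\|\rho u\|_{L^2}$; combined with the Poincar\'e-type bound $\|u\|_{L^2}\leq C\|\nabla u\|_{L^2}$ (a direct consequence of Lemma~\ref{crle1} applied to $u$, since $u\cdot n|_{\partial\Omega}=0$), this yields $\int_0^T|\int\rho u\cdot v_t|\,dt\leq C\int_0^T\|\nabla u\|_{L^2}^2\,dt\leq CC_0$ by Lemma~\ref{le2}. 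The remaining integrals in $\mathcal{R}$ are controlled by H\"older, the embedding $\|v\|_{L^6}\leq C\|\rho-\rho_s\|_{L^2}$, and Young's inequality, producing either contributions $\leq CC_0$ or $\epsilon$-fractions of $\int_0^T\|\rho-\rho_s\|_{L^2}^2\,dt$ that can be absorbed.

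The main obstacle is the term $\int(\rho-\rho_s)\nabla\psi\cdot v$, whose naive bound $C\|\nabla\psi\|_{L^3}\|\rho-\rho_s\|_{L^2}^2$ carries a $\psi$-dependent constant which cannot be taken small a priori. Here the identity \eqref{s3.60} is decisive: it rewrites
\[ (\rho-\rho_s)\nabla\psi = \rho_s^{-1}(P-P(\rho_s))\nabla\rho_s - (\gamma-1)G(\rho,\rho_s)\rho_s^{-1}\nabla\rho_s, \]
decomposing the integrand into two pieces with better structure. The $G$-piece exploits the quadratic smallness $G\leq C(\rho-\rho_s)^2$ together with the uniform $L^\infty$ bound on $\rho-\rho_s$: via the interpolation $\|(\rho-\rho_s)^2\|_{L^{3/2}}=\|\rho-\rho_s\|_{L^3}^2\leq C\|\rho-\rho_s\|_{L^2}^{4/3}$, combined with $\|v\|_{L^6}\leq C\|\rho-\rho_s\|_{L^2}$ and $\|\nabla\rho_s\|_{L^6}\leq C$, it is bounded by $C\|\rho-\rho_s\|_{L^2}^{7/3}\leq CC_0^{1/6}\|\rho-\rho_s\|_{L^2}^2$ using the sup bound $\|\rho-\rho_s\|_{L^2}^2\leq CC_0$ of Lemma~\ref{le2}, and thus absorbs into the LHS provided $C_0<\varepsilon_2$ is small. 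The remaining piece $\int\rho_s^{-1}(P-P(\rho_s))\nabla\rho_s\cdot v$ is the most delicate part of the argument, since the natural H\"older split only gives the exponent $2$; handling it requires a finer interpolation combined with the $C_0$-smallness of $\|\rho-\rho_s\|_{L^2}$ to extract an absorbable bound. After absorption, the coercivity of the LHS together with integration over $[0,T]$ yields the desired estimate $\int_0^T\|\rho-\rho_s\|_{L^2}^2\,dt\leq CC_0$, provided $C_0\leq\varepsilon_2$.
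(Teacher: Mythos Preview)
Your overall strategy---test the momentum equation against $v=\mathcal{B}[\rho-\rho_s]$ and absorb the force term via the identity \eqref{s3.60}---is the right one, and your treatment of the $G$-piece ($CC_0^{1/6}\|\rho-\rho_s\|_{L^2}^2$, hence absorbable) matches the paper exactly. The problem is the term you yourself flag as ``most delicate'': $\int\rho_s^{-1}(P-P(\rho_s))\nabla\rho_s\cdot v\,dx$. Your proposal does not actually close it. Any H\"older/interpolation split gives at best $C(\|\nabla\rho_s\|_{L^6})\,\|\rho-\rho_s\|_{L^2}^2$ (for instance $\|\rho-\rho_s\|_{L^3}\|\nabla\rho_s\|_{L^6}\|v\|_{L^2}\le C\|\rho-\rho_s\|_{L^2}^{5/3}$ still yields, after Young, a fixed fraction of $\|\rho-\rho_s\|_{L^2}^2$ with no small parameter). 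The constant in front is $\psi$-dependent and cannot be made smaller than the coercivity constant of the left-hand side, so absorption fails; and the $C_0$-smallness of $\sup_t\|\rho-\rho_s\|_{L^2}$ does not help because the term is exactly quadratic.

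The paper avoids this obstruction by applying \eqref{s3.60} \emph{before} testing, not after. Concretely, \eqref{s3.60} shows that $\nabla(P-P(\rho_s))-(\rho-\rho_s)\nabla\psi=\rho_s\nabla\bigl(\rho_s^{-1}(P-P(\rho_s))\bigr)+\frac{\gamma-1}{a}\rho_s G(\rho,\rho_s)\nabla\rho_s^{-1}$, so dividing the momentum equation by $\rho_s$ yields the reformulation \eqref{uoq1} in which the pressure--force combination is an \emph{exact gradient}. Testing \eqref{uoq1} against $\mathcal{B}[\rho-\rho_s]$ then produces the coercive quantity $\int\rho_s^{-1}(P-P(\rho_s))(\rho-\rho_s)\,dx$ directly from one integration by parts, and the only residual force contribution is the $G$-term---which, as you correctly argue, carries the factor $C_0^{1/6}$. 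Your troublesome piece $\int\rho_s^{-1}(P-P(\rho_s))\nabla\rho_s\cdot v$ never appears as a separate error; it is absorbed into the exact gradient structure. In short: use \eqref{s3.60} to rewrite the equation (as in \eqref{uoq1}) and test with $\rho_s^{-1}$ already inside, rather than testing the raw equation and splitting afterwards.
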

\begin{proof}
Using \eqref{s3.60}, we can rewrite $\eqref{a1}_2$ as
\be\ba\la{uoq1}
&-\nabla \left(\rho_s^{-1}(P-P(\rho_s))\right)\\&=\rho_s^{-1} \left(\rho \dot u-(\lambda+\mu)\nabla\div u-\mu\Delta u\right)-\frac{\gamma-1}{a}\nabla \rho_s^{-1}G(\rho,\rho_s).\ea
\ee
Multiplying  \eqref{uoq1}  by $\mathcal{B}[\n-\n_s]$ and integrating over $\Omega,$ by \eqref{th00}, one has
\bnn\la{e4} \ba &
\int\rho_s^{-1}(P-P(\rho_s))(\n-\n_s) dx \\&= \left(\int\rho_s^{-1}\rho u\cdot\mathcal{B}[\n-\n_s] dx\right)_t- \int\rho u\cdot\mathcal{B}[\n_t]  dx \\
&\quad -\int\rho_s^{-1}\rho u^i u^j\pa_j \mathcal{B}_i (\n-\n_s) dx-\int\rho u^iu^j(\pa_i \n_s^{-1} \mathcal{B}_j (\n-\n_s))dx\\
&\quad +\mu\int\rho_s^{-1} \nabla u:\nabla \mathcal{B}(\n-\n_s)dx+\mu\int\pa_i u^j\pa_i\rho_s^{-1}  \mathcal{B}_j(\n-\n_s)dx\\
&\quad +(\lambda+\mu)\int \left( \rho_s^{-1}(\n-\n_s)+\na\rho_s^{-1}\cdot \mathcal{B}(\n-\n_s)\right)\div u\,dx\\
&\quad -\frac{\gamma-1}{a}\int G(\rho,\rho_s)\nabla\rho_s^{-1}\cdot \mathcal{B}(\n-\n_s)dx\\
& \leq \left(\int\rho_s^{-1}\rho u\cdot\mathcal{B}[\n-\n_s] dx\right)_t+C\|\nabla u\|_{L^{2}}^2 +C\|u\|_{L^6}^2\|\nabla\rho_s^{-1}\|_{L^6}\|\mathcal{B}[\rho-\rho_s]\|_{L^2}\\
& \quad  +C\|\nabla u\|_{L^2}\|\mathcal{B}[\rho-\rho_s]\|_{L^2}+C\|\nabla u\|_{L^2}\|\nabla\rho_s^{-1}\|_{L^6}\|\mathcal{B}[\rho-\rho_s]\|_{L^3} \\
&\quad +C\|\nabla u\|_{L^2}\|\rho-\rho_s\|_{L^2}+C\|G(\rho,\rho_s)\|_{L^3}^\frac{1}{2}\|G(\rho,\rho_s)\|_{L^1}^\frac{1}{2}\|\nabla\rho_s^{-1}\|_{L^6} \|\mathcal{B}[\rho-\rho_s]\|_{L^6}\\
& \leq \left(\int\rho u\cdot\mathcal{B}[\rho-\rho_s] dx\right)_t+\de \|\n-\rho_s\|_{L^2}^2 +C(\de)\|\na u\|_{L^2}^2+C_3C_0^\frac{1}{6}\|\n-\rho_s\|_{L^2}^2.
\ea\enn
Now choosing $\delta=\frac{1}{4}$  and    using \eqref{a16}, we obtain  \eqref{e5} provided $C_0<\varepsilon_2=(\frac{1}{4C_3})^6$.
\end{proof}
\begin{lemma}\la{le5} Let $(\n,u)$ be a smooth solution  of
   \eqref{a1}-\eqref{ch1} on $\O \times (0,T] $ satisfying \eqref{zz1} and the initial data condition $\|u_0\|_{H^1}\leq M$ in \eqref{dt2}. Then there exists a positive constant $\varepsilon_3$ depending only  on $\mu$, $\lambda$, $\ga$, $a$, $\inf\limits_{\overline{\Omega}}\psi$, $\|\psi\|_{H^2}$, $\hat{\rho}$, $\Omega$ and $M$
 such that
  \be\la{h27}
  A_1(T)+A_2(T)\le C_0^{\frac{1}{2}},
  \ee
 provided $C_0\leq\varepsilon_3$.
   \end{lemma}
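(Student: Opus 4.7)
The plan is to close the bootstrap on $A_1(T)+A_2(T)$ by carefully estimating the two nonlinear integrals $\int_0^T\sigma\|\nabla u\|_{L^3}^3\,dt$ and $\int_0^T\sigma^3\|\nabla u\|_{L^4}^4\,dt$ which appear on the right-hand sides of \eqref{h14} and \eqref{h15}. The essential tools are the interpolation bound \eqref{h18}, the bootstrap hypotheses in \eqref{zz1}, and the time-independent estimates already proved in Lemmas \ref{le2}, \ref{zc1}, and \ref{rho0}. Since \eqref{zz1} gives $\rho\leq 2\hat\rho$ and \eqref{rhos1} gives $\rho_s\leq\bar\rho$, one has $|\rho-\rho_s|\leq C$, so $\|\rho-\rho_s\|_{L^p}^p\leq C\|\rho-\rho_s\|_{L^2}^2$ for $p\in\{3,4\}$, which will let us feed every pressure contribution into the small bound \eqref{e5}.

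First I would apply \eqref{h18} with $p=3$ and Young's inequality to obtain, pointwise in $t$,
\begin{align*}
\sigma\|\nabla u\|_{L^3}^3\leq\tfrac{1}{2}\sigma\|\rho^{1/2}\dot u\|_{L^2}^2+C\sigma\|\nabla u\|_{L^2}^6+C\sigma\|\nabla u\|_{L^2}^3+C\|\rho-\rho_s\|_{L^2}^2.
\end{align*}
Integrating in time, the first term is absorbed into $A_1(T)$; the pressure term is $\leq CC_0$ by \eqref{e5}; and each $\sigma\|\nabla u\|_{L^2}^{2k+2}$ factor ($k=1,2$) is handled by extracting $(\sigma\|\nabla u\|_{L^2}^2)^{k}\leq(2C_0^{1/2})^{k}$ from the bootstrap and then using $\int_0^T\|\nabla u\|_{L^2}^2\,dt\leq CC_0$ from \eqref{a16}, yielding contributions of order $C_0^{5/4}$ and $C_0^{3/2}$ respectively.

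The $p=4$ case is analogous, producing $\sigma^3\|\nabla u\|_{L^4}^4\leq C\sigma^3\|\rho^{1/2}\dot u\|_{L^2}^3\|\nabla u\|_{L^2}+C\sigma^3\|\nabla u\|_{L^2}^4+C\|\rho-\rho_s\|_{L^2}^2$. The only delicate term is the cubic $\dot u$-product, which I would estimate by pulling out $\sup\sigma^3\|\rho^{1/2}\dot u\|_{L^2}^2\leq 2C_0^{1/2}$ from the bootstrap and then applying Cauchy--Schwarz in time to the remaining $\int\|\rho^{1/2}\dot u\|_{L^2}\|\nabla u\|_{L^2}\,dt$, which is controlled by \eqref{uv1} and \eqref{a16} by $C(CC_0)^{1/2}$. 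Putting everything together gives $\int_0^T\sigma\|\nabla u\|_{L^3}^3\,dt+\int_0^T\sigma^3\|\nabla u\|_{L^4}^4\,dt\leq\tfrac{1}{2}\int_0^T\sigma\|\rho^{1/2}\dot u\|_{L^2}^2\,dt+CC_0$. Substituting into \eqref{h14}--\eqref{h15}, absorbing the half-multiple of the dissipation, and then plugging the resulting $A_1(T)\leq CC_0$ into the $CA_1(T)$ summand of \eqref{h15} yields $A_1(T)+A_2(T)\leq CC_0$. Choosing $\varepsilon_3\leq(2C)^{-2}$ converts this into $A_1(T)+A_2(T)\leq C_0^{1/2}$ for $C_0\leq\varepsilon_3$.

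The principal obstacle is purely algebraic bookkeeping: every Young-inequality split must be arranged so that one factor is either absorbable by the dissipation $\int\sigma\|\rho^{1/2}\dot u\|_{L^2}^2\,dt$, $\int\sigma^3\|\nabla\dot u\|_{L^2}^2\,dt$, or controllable by a bootstrap weight of size $C_0^{1/2}$, while the companion factor has finite $L^1_t$-norm of genuine size $CC_0$. The only sources of such genuine smallness are \eqref{a16}, \eqref{e5}, and the two dissipative integrals on the left sides of \eqref{h14}--\eqref{h15}; a careful case analysis confirms that each of the finitely many monomials in $\nabla u$, $\rho^{1/2}\dot u$, and $\rho-\rho_s$ produced above can be handled in this manner.
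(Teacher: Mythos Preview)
Your proposal is correct and follows the same overall strategy as the paper: control the cubic and quartic integrals on the right of \eqref{h14}--\eqref{h15} via the interpolation bound \eqref{h18}, then feed in the bootstrap hypotheses \eqref{zz1} together with \eqref{a16}, \eqref{uv1} and \eqref{e5}. The treatment of $\int_0^T\sigma^3\|\nabla u\|_{L^4}^4\,dt$ is essentially identical to the paper's \eqref{h99}.

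The one genuine methodological difference is in the $\int_0^T\sigma\|\nabla u\|_{L^3}^3\,dt$ term. The paper splits the time axis at $t=\sigma(T)$: on $[0,\sigma(T)]$ it uses a H\"older-in-time argument to produce $C A_1(T)^{3/4}C_0^{1/4}\le CC_0^{5/8}$, and on $[\sigma(T),T]$ it writes $\|\nabla u\|_{L^3}^3\le\|\nabla u\|_{L^4}^4+\|\nabla u\|_{L^2}^2$ and reuses \eqref{h99}. You instead apply Young's inequality and absorb a small multiple of $\int_0^T\sigma\|\rho^{1/2}\dot u\|_{L^2}^2\,dt$ into the left side of \eqref{h14}. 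Your route is arguably more direct and yields the sharper final bound $A_1(T)+A_2(T)\le CC_0$ rather than $CC_0^{5/8}$; the paper's split avoids any absorption step.

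Two minor repairs are needed. First, the ``$\tfrac12$'' in your Young split must be replaced by a parameter $\epsilon$ chosen so that $C\epsilon<1$, where $C$ is the fixed constant multiplying $\int_0^T\sigma|\nabla u|^3\,dx\,dt$ in \eqref{h14}; otherwise the absorption fails. Second, your bookkeeping for the lower-order terms is slightly off: the monomials actually produced are $\sigma\|\nabla u\|_{L^2}^3$ and $\sigma\|\nabla u\|_{L^2}^6$, not $\sigma\|\nabla u\|_{L^2}^{2k+2}$ for $k=1,2$, and pulling out $(\sigma\|\nabla u\|_{L^2}^2)^2$ from the latter would leave a nonintegrable $\sigma^{-1}$. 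Instead write $\sigma\|\nabla u\|_{L^2}^6=(\sigma\|\nabla u\|_{L^2}^2)\|\nabla u\|_{L^2}^4$, bound the first factor by $2C_0^{1/2}$, and control $\int_0^T\|\nabla u\|_{L^2}^4\,dt\le\sup\|\nabla u\|_{L^2}^2\cdot\int_0^T\|\nabla u\|_{L^2}^2\,dt\le CC_0$ via \eqref{uv1} and \eqref{a16}; the cubic term is handled by $\sigma\|\nabla u\|_{L^2}^3=(\sigma\|\nabla u\|_{L^2}^2)^{1/2}\sigma^{1/2}\|\nabla u\|_{L^2}^2\le(2C_0^{1/2})^{1/2}\|\nabla u\|_{L^2}^2$. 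With these fixes the argument closes exactly as you describe.
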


\begin{proof} By (\ref{h18}), \eqref{zz1}, \eqref{e5}, and Lemmas \ref{le2} and  \ref{zc1}, we get
  \be\la{h99} \ba
  &  \int_0^{T}\sigma^3 \|\na u\|_{L^4}^4 dt\\
& \le  C \int_0^{T}\sigma^{3} \|\n^{\frac{1}{2}}  \dot u \|_{L^2}^3\|\nabla u\|_{L^{2}}dt+ C\int_{0}^{T}\sigma^{3}(\|\nabla u\|_{L^{2}}^{4}+\|\rho-\rho_s\|_{L^{4}}^{4})dt \\
& \le C\left(\int_{0}^{T}(\sigma^{\frac{3}{2}}\|\rho^{\frac{1}{2}}\dot{u}\|_{L^{2}})(\sigma \|\rho^{\frac{1}{2}}\dot{u}\|_{L^{2}}^{2})(\sigma\|\nabla u\|_{L^{2}}^{2})^{\frac{1}{2}}dt\right)\\
& \quad +C\left(\int_{0}^{T}(\sigma\|\nabla u\|_{L^{2}}^{2})\|\nabla u\|_{L^{2}}^{2}dt+\int_0^T\sigma^{3}\|\rho-\rho_s\|_{L^{2}}^{2}dt\right) \\
& \le C\left[(A_1^{\frac{1}{2}}(T)+C_0^{\frac{1}{2}})A_2^{\frac{1}{2}}(T)A_1(T)+C_0\right] \\
& \le CC_0,
 \ea \ee
which  along with \eqref{h14} and \eqref{h15} gives
\be\la{h991} \ba
A_1(T)+A_2(T)\leq C  C_0+C\int_0^T\sigma\|\nabla u\|_{L^{3}}^{3}dt .
\ea \ee
For the last term on the righthand side of \eqref{h991},  on the one hand, we deduce from (\ref{h18}), \eqref{zz1}, \eqref{e5}  and Lemmas \ref{le2} that
\be\la{h992} \ba
&\int_0^{\sigma(T)}\sigma\|\nabla u\|_{L^{3}}^{3}dt \\
& \le C\int_0^{\sigma(T)}\sigma\|\rho^{\frac{1}{2}}\dot{u}\|_{L^{2}}^{\frac{3}{2}}\|\nabla u\|_{L^{2}}^{\frac{3}{2}}dt+C\int_0^{\sigma(T)}\sigma(\|\nabla u\|_{L^{2}}^{3}+\|\rho-\rho_s\|_{L^{3}}^{3})dt\\
& \le C\int_0^{\sigma(T)}\|\nabla u\|_{L^{2}}\|\nabla u\|_{L^{2}}^{\frac{1}{2}}(\sigma\|\rho^{\frac{1}{2}}\dot{u}\|_{L^{2}}^{2})^{\frac{3}{4}}dt+CC_0\\
\quad &\le C(\sup_{0\le t\le \sigma(T)}\|\nabla u\|_{L^{2}}^{2})^{\frac{1}{2}}\left(\int_0^{\sigma(T)}\|\nabla u\|_{L^{2}}^{2}dt\right)^{\frac{1}{4}}\left(\int_0^{\sigma(T)}\sigma\|\rho\dot{u}\|_{L^{2}}^{2}dt\right)^{\frac{3}{4}}+CC_0 \\
& \le C(A_1(T))^{\frac{3}{4}}C_0^{\frac{1}{4}}+CC_0\\
& \le CC_0^{\frac{5}{8}},
\ea \ee
provided $C_0<\widehat{\varepsilon}_2=\min\{\varepsilon_1,\varepsilon_2\}$.

On the other hand, by \eqref{h99} and \eqref{a16},
\be\la{h993} \ba
\int_{\sigma(T)}^T\sigma\|\nabla u\|_{L^{3}}^{3}dt\le\int_{\sigma(T)}^T\sigma\|\nabla u\|_{L^{4}}^{4}dt+\int_{\sigma(T)}^T\sigma\|\nabla u\|_{L^{2}}^{2}dt
 \le CC_0 .
\ea \ee
Hence, by \eqref{h991}-\eqref{h993},
\bnn\la{h994} \ba
A_1(T)+A_2(T)\leq C_4C_0^{\frac{5}{8}},
\ea \enn
which yields that \eqref{h27} holds, provided that $C_0<\varepsilon_3\triangleq\min\{\widehat{\varepsilon}_2,(\frac{1}{C_4})^8\}$.
\end{proof}

To give a uniform (in time) upper bound for the
density, which is crucial  to get all the higher
order estimates and thus to extend the classical solution globally.
We will adopt an approach motivated by the work of \cite{lx}, see also \cite{hlx1}.
\begin{lemma}\la{le7}
There exists a positive constant
   $\ve$ depending on $\mu$, $\lambda$, $\ga$, $a$, $\inf\limits_{\overline{\Omega}}\psi$, $\|\psi\|_{H^2}$, $\hat{\rho}$, $\Omega$ and $M$ such that, if  $(\n,u)$ is a smooth solution  of
   \eqref{a1}-\eqref{ch1} on $\O \times (0,T] $ satisfying \eqref{zz1} and the initial data condition $\|u_0\|_{H^s}\leq M$ in \eqref{dt2}, then
      \be\la{lv102}\sup_{0\le t\le T}\|\n(t)\|_{L^\infty}  \le
\frac{7\hat \n }{4}  ,\ee
      provided $C_0\le \ve. $
\end{lemma}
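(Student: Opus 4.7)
The strategy is to apply Zlotnik's inequality (Lemma \ref{le1}) along the particle trajectories of the velocity field. Let $X(x,t)$ denote the flow of $u$, i.e.\ $\p_t X = u(X,t)$, $X(x,0)=x$, and set $y(t):=\n(X(x,t),t)$. Since $u\cdot n=0$ on $\p\O$, the map $X(\cdot,t)$ is a diffeomorphism of $\ol\O$, so a uniform bound on $y$ will yield \eqref{lv102}. Along the trajectory $(\ref{a1})_1$ reduces to $y'(t)=-y\,\div u|_{(X,t)}$, and writing $\div u=(F+P(y)-P(\n_s(X)))/(\lambda+2\mu)$ via the definition of the effective viscous flux gives $y'(t)=g(y)+b'(t)$, with
\bnn
g(y):=-\frac{ay}{\lambda+2\mu}(y^\ga-\on^\ga),\quad b'(t):=-\frac{yF(X,t)}{\lambda+2\mu}+\frac{ay(\on^\ga-\n_s^\ga(X))}{\lambda+2\mu}.
\enn
Clearly $g\in C(\mr)$, $g(\on)=0$, and $g(\infty)=-\infty$. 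Because $\hat\n>\on+1$, I can pick $\bar\zeta\in(\on,7\hat\n/4)$ large enough that $N_1:=-g(\bar\zeta)$ strictly exceeds the fixed bound $\tfrac{2a\hat\n|\on^\ga-\n_s^\ga|_{L^\infty}}{\lambda+2\mu}$ coming from the second summand of $b'$, while still keeping $\max\{\hat\n,\bar\zeta\}\le 7\hat\n/4-\eta$ for some $\eta>0$ independent of $C_0$.

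With these choices, invoking Lemma \ref{le1} reduces to verifying the Zlotnik condition
\bnn
b(t_2)-b(t_1)\le N_0+N_1(t_2-t_1),\qquad 0\le t_1<t_2\le T,
\enn
with $N_0=O(C_0^\al)$ for some $\al>0$, which under the hypothesis $\n\le 2\hat\n$ boils down to estimating $\int_{t_1}^{t_2}\|F(\cdot,t)\|_{L^\infty}dt$. I would bound $\|F\|_{L^\infty}$ via Morrey's embedding $\|F\|_{L^\infty}\le C(\|F\|_{L^6}+\|\na F\|_{L^6})$ together with Lemma \ref{le3} at $p=6$: $\|F\|_{L^6}$ is controlled by $\|\n^{1/2}\dot u\|_{L^2}+\|\na u\|_{L^2}+\|\n-\n_s\|_{L^3}$, and $\|\na F\|_{L^6}\le C(\|\n\dot u\|_{L^6}+\|\n-\n_s\|_{L^\infty})$, which by \eqref{tb90} gives $\|\na F\|_{L^6}\le C(\|\na\dot u\|_{L^2}+\|\na u\|_{L^2}^2+1)$.

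Next, split the time integral into the short-time piece $[0,\si(T)]$ and the long-time piece $[\si(T),T]$. On $[0,\si(T)]$, inserting the $\si$-weights used to define $A_1,A_2$ and applying Hölder yields $\int_0^{\si(T)}\|F\|_{L^\infty}dt\le CC_0^\al$, thanks to $A_1+A_2\le C_0^{1/2}$ from Lemma \ref{le5}, the preliminary estimate of Lemma \ref{a3c}, and the pressure bound \eqref{e5}. On $[\si(T),T]$, where $\si\equiv 1$, I would first bound
\bnn
\int_{t_1}^{t_2}\|F\|_{L^\infty}^2\,dt\le CC_0^{1/2}+C(t_2-t_1),
\enn
where the $O(C_0^{1/2})$ part comes from $\int\|\na\dot u\|_{L^2}^2 dt+\int\|\n^{1/2}\dot u\|_{L^2}^2 dt\le CC_0^{1/2}$ extracted from $A_2$, while the linear-in-time part collects the crudely bounded $\|\na u\|_{L^2}^4$, $\|\n-\n_s\|_{L^\infty}^2$, and constant contributions. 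Cauchy-Schwarz then gives a $(t_2-t_1)^{1/2}$ factor that I would tame by $2\sqrt{AB}\le\ve A+\ve^{-1}B$, producing a decomposition $CC_0^{1/4}+C(t_2-t_1)$ as required.

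The main obstacle is exactly this final interpolation on the long-time interval: naive Cauchy-Schwarz produces a $(t_2-t_1)^{1/2}$ term which is not of the linear form demanded by Lemma \ref{le1}, so to peel off a genuinely small $N_0$ one must keep the $A_2$-smallness $O(C_0^{1/2})$ inside the square root and use AM-GM to split off a pure $(t_2-t_1)$ piece. The resulting linear piece must then lie within the $N_1$ budget determined by $\bar\zeta$, which is why $\bar\zeta$ has to be chosen large enough so that $-g(\bar\zeta)$ dominates all the fixed constants produced by $b'$, yet small enough that $\bar\zeta+N_0\le 7\hat\n/4$ once $C_0\le\ve$ is small. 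With the Zlotnik condition in force, Lemma \ref{le1} yields $y(t)\le\max\{\hat\n,\bar\zeta\}+N_0\le 7\hat\n/4$ pointwise along each trajectory, and the diffeomorphism property of $X(\cdot,t)$ converts this into \eqref{lv102}.
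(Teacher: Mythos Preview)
Your overall architecture---rewrite $(\ref{a1})_1$ along trajectories as $y'=g(y)+b'(t)$, split into the short interval $[0,\si(T)]$ and the long interval $[\si(T),T]$, and invoke Zlotnik's lemma---is exactly the paper's. The gap is in how you balance the linear coefficient $N_1$ against the constraint $\bar\zeta\le 7\hat\n/4$.

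Two choices you make each force a \emph{fixed} (not small-in-$C_0$) contribution into $N_1$. First, by taking $g(y)=-\tfrac{ay}{\lambda+2\mu}(y^\ga-\on^\ga)$ with the constant $\on$ rather than $\n_s(X)$, the correction $\tfrac{ay(\on^\ga-\n_s^\ga)}{\lambda+2\mu}$ in $b'$ contributes a linear term with coefficient of order $\tfrac{2a\hat\n(\on^\ga-\underline\n^\ga)}{\lambda+2\mu}$. Second, your Morrey bound $\|F\|_{L^\infty}\le C(\|F\|_{L^6}+\|\na F\|_{L^6})$ brings in $\|\n-\n_s\|_{L^\infty}$ (via \eqref{h19} at $p=6$), which is $O(1)$, not small; after Cauchy--Schwarz and AM--GM you are left with a linear piece of size $\sqrt C$, again a fixed constant. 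You then need $-g(\bar\zeta)=\tfrac{a\bar\zeta(\bar\zeta^\ga-\on^\ga)}{\lambda+2\mu}\ge N_1$ with $\bar\zeta\le 7\hat\n/4$. This is \emph{not} always achievable: for instance, if $\ga$ is close to~$1$, $\underline\n$ is small, $\on$ is large, and $\hat\n$ is only slightly above $\on+1$, then $\bar\zeta(\bar\zeta^\ga-\on^\ga)$ on $(\on,7\hat\n/4]$ cannot dominate $2\hat\n(\on^\ga-\underline\n^\ga)$, let alone the additional Cauchy--Schwarz constant. So the sentence ``I can pick $\bar\zeta\in(\on,7\hat\n/4)$ large enough \ldots while still keeping $\max\{\hat\n,\bar\zeta\}\le 7\hat\n/4-\eta$'' is an unjustified claim.

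The paper sidesteps both issues. It keeps $\n_s$ inside $g$, so $b'=-\tfrac{\n F}{\lambda+2\mu}$ with no extra term (Zlotnik still applies because the needed inequality $g(\zeta,t)\le -N_1$ holds uniformly in $t$ once $\zeta\ge\bar\zeta>\on$). On the long interval it does \emph{not} square and Cauchy--Schwarz; instead it uses Young's inequality in the form $C\|F\|_{L^\infty}\le \tfrac{a}{\lambda+2\mu}+C'\|F\|_{L^\infty}^{8/3}$, which lets the linear coefficient be chosen equal to the a priori small number $\tfrac{a}{\lambda+2\mu}$, matched by $\bar\zeta=3\hat\n/2$. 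The residual $\int\|F\|_{L^\infty}^{8/3}$ is then controlled by the Gagliardo--Nirenberg interpolation $\|F\|_{L^\infty}^{8/3}\le C\|F\|_{L^4}^{2/3}\|\na F\|_{L^4}^2+C\|F\|_{L^2}^{8/3}$, in which $\|\na F\|_{L^4}$ involves only $\|\n-\n_s\|_{L^{12}}\le C\|\n-\n_s\|_{L^2}^{1/6}=O(C_0^{1/12})$ rather than $\|\n-\n_s\|_{L^\infty}$. That is precisely what makes the whole remainder $O(C_0^\al)$ and closes the argument. If you replace your $L^6$ embedding and Cauchy--Schwarz step by this Young/interpolation device (and either keep $\n_s$ in $g$ or absorb the correction into the same Young step), your outline goes through.
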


\begin{proof} Denote $$
D_t\n\triangleq\n_t+u \cdot\nabla \n ,\quad
g(\n)\triangleq-\frac{\rho(P-P(\rho_s))}{2\mu+\lambda}  ,
\quad b(t)\triangleq-\frac{1}{2\mu+\lambda} \int_0^t\n Fdt, $$
then $(\ref{a1})_1$ can be rewritten as
 \be \la{z.3} D_t \n=g(\n)+b'(t). \ee
By Lemma \ref{le1}, it is sufficient to check that the function $b(t)$ must verify \eqref{a100} with some suitable constants $N_0$, $N_1$.

First, it follows from \eqref{g2}, \eqref{h19}, \eqref{hh20}, \eqref{tb90}, \eqref{zz1}, \eqref{a16} and Lemma \ref{zc1} that for $0\leq t_1\leq t_2\leq\sigma(T)$,
\be \la{xbh19} \ba
&|b(t_2)-b(t_1)| \\&
\le C\int_0^{\sigma(T)}\|(\rho F)(\cdot,t)\|_{L^{\infty}}dt\\
& \le C\int_0^{\sigma(T)}\|F\|_{L^{4}}^{\frac{1}{4}}\|\nabla F\|_{L^{4}}^{\frac{3}{4}}dt+C\int_0^{\sigma(T)}\|F\|_{L^{2}}dt \\
& \le C\int_0^{\sigma(T)}(\|\rho^{\frac{1}{4}}\dot{u}\|_{L^2}^{\frac{1}{4}}+\|\nabla u\|_{L^2}^{\frac{1}{4}}+\|\rho-\rho_s\|_{L^3}^{\frac{1}{4}})(\|\rho\dot{u}\|_{L^{4}}^{\frac{3}{4}}+\|\rho-\rho_s\|_{L^{12}}^{\frac{3}{4}})dt\\
& \quad +C\int_0^{\sigma(T)}(\|\nabla u\|_{L^2}+\|\rho-\rho_s\|_{L^2})dt \\
& \le C\int_0^{\sigma(T)}(\|\rho^{\frac{1}{4}}\dot{u}\|_{L^2}^{\frac{1}{4}}+\|\nabla u\|_{L^2}^{\frac{1}{4}}+C_0^{\frac{1}{12}})(\|\nabla \dot{u}\|_{L^{2}}^{\frac{3}{4}}+\|\nabla u\|_{L^2}^{\frac{3}{2}}+C_0^{\frac{1}{16}})dt +CC_0^{\frac{1}{2}}\\
&\le C\int_0^{\sigma(T)}\sigma^{-\frac{1}{2}} (\sigma\|\rho^{\frac{1}{2}}\dot{u}\|_{L^2}^{2})^{\frac{5}{48}}
(\sigma\|\rho^{\frac{1}{2}}\dot{u}\|_{L^2}^{2})^{\frac{1}{48}} (\sigma\|\nabla\dot{u}\|_{L^2}^{2})^{\frac{3}{8}}dt \\
&\quad+C\int_0^{\sigma(T)}\sigma^{-\frac{3}{8}}(\|\nabla u\|_{L^2}^{\frac{1}{4}}+C_0^{\frac{1}{12}}) (\sigma\|\nabla\dot{u}\|_{L^2}^{2})^{\frac{3}{8}}dt+CC_0^{\frac{1}{8}}\\
&\quad+C\int_0^{\sigma(T)}(\|\rho^{\frac{1}{2}}\dot{u}\|_{L^2}^{2})^{\frac{1}{8}} (\|\nabla u\|_{L^2}^2)^{\frac{3}{4}}dt+C\int_0^{\sigma(T)} (\sigma\|\rho^{\frac{1}{2}}\dot{u}\|_{L^2}^{2})^{\frac{1}{8}}
C_0^{\frac{1}{12}}\sigma^{-\frac{1}{8}}dt\\
& \le C\left(\int_0^{\sigma(T)}\sigma^{-\frac{4}{5}} (\sigma\|\rho^{\frac{1}{2}}\dot{u}\|_{L^2}^{2})^{\frac{1}{6}}dt \right)^{\frac{5}{8}} +C\left(\int_0^{\sigma(T)}t^{-\frac{3}{5}}(\|\nabla u\|_{L^2}^2)^{\frac{1}{5}}dt\right)^{\frac{5}{8}}+CC_0^{\frac{1}{12}}\\
& \le C\left(\int_0^{\sigma(T)}t^{-\frac{24}{25}}dt\right)^
{\frac{25}{48}}\left(\int_0^{\sigma(T)}t\|\rho^{\frac{1}{2}}\dot{u}\|_{L^2}^{2}
dt\right)^{\frac{5}{48}} \\
&\quad+C\left(\int_0^{\sigma(T)}t^{-\frac{3}{4}}dt\right)^{\frac{1}{2}}
\left(\int_0^{\sigma(T)}\|\nabla u\|_{L^2}^{2}dt\right)^{\frac{1}{8}}
+CC_0^{\frac{1}{12}}\\
& \le C  A_1(\sigma(T))^{\frac{5}{48}}+CC_0^{\frac{1}{12}}  \\
& \le C_5C_0^{\frac{1}{20}},
\ea\ee
provided $C_0<\varepsilon_1$.

By \eqref{xbh19} and \eqref{z.3}, choosing $N_1=0$, $N_0=C_5C_0^{\frac{1}{20}}$, $\bar{\zeta}=\hat{\rho}$
in Lemma  \ref{le1} gives
   \be\la{a103}\sup_{t\in
[0,\si(T)]}\|\n\|_{L^\infty} \le \hat{\rho}
+C_5C_0^{\frac{1}{20}}\le\frac{3 \hat{\n}  }{2},\ee
 provided $C_0\le \hat{\ve}_3\triangleq\min\{\varepsilon_3, (\frac{\hat{\rho}}{2C_5})^{20}\}. $

On the other hand, for $\sigma(T)\le t_1\le t_2\le T ,$ we deduce from \eqref{h19}, \eqref{h20}, \eqref{tb90}, \eqref{zz1}, \eqref{a16} and \eqref{e5} that
\be \la{xbh20} \ba
&|b(t_2)-b(t_1)| \le C\int_{t_1}^{t_2}\|F\|_{L^{\infty}}dt \\
&\le \frac{a}{\lambda+2\mu}(t_2-t_1)+C\int_{t_1}^{t_2}\|F\|_{L^{\infty}}^\frac{8}{3}dt \\
&\le \frac{a}{\lambda+2\mu}(t_2-t_1)+C\int_{t_1}^{t_2}(\|F\|_{L^4}^\frac{2}{3}\|\nabla F\|_{L^{4}}^2+\|F\|_{L^2}^\frac{8}{3})dt \\
& \le \frac{a}{\lambda+2\mu}(t_2-t_1)+CC_0^{\frac{2}{9}}\int_{\sigma(T)}^{T}\|\nabla \dot{u}\|_{L^2}^{2}dt+CC_0 \\
& \le \frac{a}{\lambda+2\mu}(t_2-t_1)+C_6C_0^{\frac{1}{2}}.
\ea\ee

Now we choose $N_0=C_6C_0^{\frac{1}{2}}$, $N_1=\frac{a}{\lambda+2\mu}$ in \eqref{a100} and set $\bar\zeta= \frac{3\hat{\rho}}{2}$ in (\ref{a101}). Notice that for all $  \zeta \geq\bar{\zeta}=\frac{3\hat{\rho}}{2}>\bar{\rho}+1$,
$$ g(\zeta)=-\frac{ a\zeta}{2\mu+\lambda}(\zeta^{\gamma}-\rho_s^{\gamma})\le -\frac{a}{\lambda+2\mu}= -N_1.$$
Consequently, by \eqref{z.3}, \eqref{xbh20} and Lemma \ref{le1}, we have
\be\la{a102} \sup_{t\in
[\si(T),T]}\|\n\|_{L^\infty}\le  \frac{ 3\hat \n
}{2}  +C_6C_0^{\frac{1}{2}} \le
\frac{7\hat \n }{4} ,\ee provided
\be \la{xbh21} \ba C_0\le
\ve\triangleq\min\{\hat{\ve}_3, (\frac{ \hat \n }{4C_6})^2\}.
\ea\ee
The combination of (\ref{a103}) with (\ref{a102}) completes the
proof of Lemma \ref{le7}.
\end{proof}
\section{\la{se4} Time-dependent  higher-order estimates }

Let $(\n,u)$ be a smooth solution of \eqref{a1}-\eqref{ch1}. This section is devoted to deriving some necessary higher order estimates, which play an important role in proving that the classical or strong solution exists globally in time. We will adopt some ideas of the article \cite{hlx1,jx01} with slight modifications.
In this section, we always assume that the initial energy $C_0$ satisfies (\ref{xbh21}).

\subsection{Estimates for strong solutions}
\begin{lemma}\la{xle11}For $q\in(3,6)$ as in Theorem \ref{th3},  it holds that for $r=(9q-6)/(10q-12)$
\be\label{1cxb3}
\sup_{0\le t\le T}(\|\nabla\rho\|_{L^q}+\sigma\|u\|_{H^2})+\int_0^T(\|\nabla u\|_{L^\infty}^r+\|\nabla^{2} u\|_{L^q}^{r}+\|\nabla^{2} u\|_{L^2}^{2})dt\leq C,\ee
where and in what follows, $C$ is a  positive constant    depending on $ T, \|\n_0-\rho_s\|_{W^{1,q}}$,
  $\mu$, $\lambda$, $\gamma$, $a$, $\inf_{\overline{\Omega}}\psi$, $\|\psi\|_{H^2}$, $\hat{\rho}$, $\Omega$ and $M$.
\end{lemma}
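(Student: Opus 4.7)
The plan is to derive a logarithmic Gr\"onwall inequality for $\Phi(t)\triangleq e+\|\nabla\rho(\cdot,t)\|_{L^q}$ by combining the continuity equation with the Beale--Kato--Majda bound of Lemma~\ref{le9} and the Lam\'e-type estimate \eqref{remark1} applied to the momentum equation. Once $\sup_{0\le t\le T}\|\nabla\rho\|_{L^q}$ is controlled, every remaining quantity in \eqref{1cxb3} follows from the time-uniform estimates already obtained in Section 3, namely $\sup_t\|\nabla u\|_{L^2}^2+\int_0^T\|\rho^{1/2}\dot u\|_{L^2}^2\,dt\le C$ and $\sup_t\sigma\|\rho^{1/2}\dot u\|_{L^2}^2+\int_0^T\sigma\|\nabla\dot u\|_{L^2}^2\,dt\le C$ from Lemma~\ref{zc1}, together with the density bound $\rho\le 2\hat\rho$.

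Applying $\nabla$ to $(\ref{a1})_1$, multiplying by $q|\nabla\rho|^{q-2}\nabla\rho$ and integrating by parts (the boundary term vanishes because $u\cdot n=0$) one obtains
\[
\frac{d}{dt}\|\nabla\rho\|_{L^q}\le C\|\nabla u\|_{L^\infty}\|\nabla\rho\|_{L^q}+C\|\nabla^2 u\|_{L^q}.
\]
The Lam\'e estimate \eqref{remark1}, together with the Hölder bound $\|(\rho-\rho_s)\nabla\psi\|_{L^q}\le C\|\rho-\rho_s\|_{L^\infty}\|\nabla\psi\|_{L^6}\le C$, gives $\|\nabla^2 u\|_{L^q}\le C(\|\rho\dot u\|_{L^q}+\|\nabla\rho\|_{L^q}+1)$. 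For $\|\nabla u\|_{L^\infty}$, Lemma~\ref{le9} combined with Lemma~\ref{le3} and the Morrey embedding $W^{1,q}\hookrightarrow C(\bar\Omega)$ yields
\[
\|\nabla u\|_{L^\infty}\le C(\|\rho\dot u\|_{L^q}+1)\ln(e+\|\nabla^2 u\|_{L^q})+C,
\]
so that, after substitution, $(\ln\Phi)'(t)\le C(\|\rho\dot u\|_{L^q}+1)(1+\ln\Phi(t))+C(\|\rho\dot u\|_{L^q}+1)\ln(e+\|\rho\dot u\|_{L^q})$.

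To close the Gr\"onwall loop I would establish $\int_0^T\|\rho\dot u\|_{L^q}^r\,dt<\infty$ by interpolating
\[
\|\rho\dot u\|_{L^q}\le C\|\rho\dot u\|_{L^2}^{(6-q)/(2q)}\|\rho\dot u\|_{L^6}^{(3q-6)/(2q)},
\]
then inserting $\|\rho\dot u\|_{L^6}\le C(\|\nabla\dot u\|_{L^2}+1)$ from Lemma~\ref{uup1} and the pointwise blow-up $\|\rho\dot u\|_{L^2}\le C\sigma^{-1/2}$. H\"older against $\int_0^T\sigma\|\nabla\dot u\|_{L^2}^2\,dt\le C$ produces a finite bound for $\int_0^T\|\rho\dot u\|_{L^q}^r\,dt$ provided $r<4q/(5q-6)$; the stated exponent $r=(9q-6)/(10q-12)$ lies strictly inside this range for all $q\in(3,6)$, which is exactly why it is the admissible exponent in \eqref{1cxb3}. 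With $r>1$, the $\ln$-factor is dominated by a slightly larger power and the logarithmic Gr\"onwall closes, giving $\sup_t\|\nabla\rho\|_{L^q}\le C$. Then \eqref{remark1} yields $\int_0^T(\|\nabla^2 u\|_{L^q}^r+\|\nabla u\|_{L^\infty}^r)\,dt\le C$, while $\int_0^T\|\nabla^2 u\|_{L^2}^2\,dt\le C$ follows from the $L^2$ version of Lam\'e using $\int_0^T\|\rho\dot u\|_{L^2}^2\,dt\le C$, and $\sigma\|\nabla^2 u\|_{L^2}\le C\sigma(\|\rho\dot u\|_{L^2}+1)\le C\sigma^{1/2}\le C$ using $\|\rho\dot u\|_{L^2}\le C\sigma^{-1/2}$.

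The principal obstacle is coordinating the $\sigma^{-1/2}$ blow-up of $\|\rho\dot u\|_{L^2}$ as $t\to 0^+$, which is unavoidable because the initial data lack a compatibility condition (see Remark~\ref{nauc1}), with the nonlinear feedback from Beale--Kato--Majda that forces control of $\|\nabla^2 u\|_{L^q}$. The exponent $r$ is pinned down precisely by requiring the resulting H\"older integral in $\sigma$ to converge; the large potential force contributes only lower-order terms that are absorbed throughout thanks to the uniform bound on $\rho-\rho_s$ and the assumption $\psi\in H^2$.
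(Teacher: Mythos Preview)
Your proposal is correct and follows essentially the same route as the paper: differentiate the continuity equation, feed in the Beale--Kato--Majda bound of Lemma~\ref{le9} together with the Lam\'e estimate \eqref{remark1}, interpolate $\|\rho\dot u\|_{L^q}$ between $L^2$ and $L^6$ using the time-weighted bounds of Lemma~\ref{zc1}, and close a logarithmic Gr\"onwall inequality for $e+\|\nabla\rho\|_{L^q}$. The only cosmetic difference is that the paper writes the forcing term in the transport estimate as $\|\nabla F\|_{L^q}$ (via \eqref{h19}) rather than $\|\nabla^2 u\|_{L^q}$, which spares one from carrying the additive $\|\nabla\rho\|_{L^q}$ into the Gr\"onwall loop; your version absorbs it just as well, and your explicit verification of the admissible range $r<4q/(5q-6)$ and of $r>1$ makes the H\"older step and the domination of the logarithmic factor transparent.
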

\begin{proof}
By $\eqref{a1}_1$, it is clear that $|\nabla\rho|^p$, $p\in[2,6]$ satisfies
\bnnn \ba
& (|\nabla\rho|^p)_t + \text{div}(|\nabla\rho|^pu)+ (p-1)|\nabla\rho|^p\text{div}u  \\
 &+ p|\nabla\rho|^{p-2}(\nabla\rho)^{tr} \nabla u (\nabla\rho) +
p\rho|\nabla\rho|^{p-2}\nabla\rho\cdot\nabla\text{div}u = 0,\ea
\ennn
where $(\nabla\rho)^{tr}$ is the transpose of $\nabla\rho$.

Therefore,  by \eqref{h19}, \eqref{tb90} and \eqref{uv1},
\be\la{cxb9}\ba
%(\|\nabla\rho\|_{L^6})_t&\le C(1+\norm[L^{\infty}]{\nabla u} )\norm[L^6]{\nabla\rho} +C\|\na^2u\|_{L^6}\\ \eqref{remark1},
(\|\nabla\rho\|_{L^p})_t&\le C(1+\norm[L^{\infty}]{\nabla u} )\norm[L^p]{\nabla\rho} +C\|\na F\|_{L^p}\\
%&\le C(1+\norm[L^{\infty}]{\nabla u} )\norm[L^6]{\nabla\rho} \\
%&\quad+C(\|\rho\dot{u}\|_{L^6}+\|\nabla u\|_{L^2}+\|P-P(\rho_s)\|_{L^2}+\|P-P(\rho_s)\|_{L^6})\\
&\le C(1+\norm[L^{\infty}]{\nabla u} )\norm[L^p]{\nabla\rho}+C(\|\rho\dot{u}\|_{L^p}+1).
 \ea\ee
By Lemma \ref{le9}, \eqref{remark1} and \eqref{tb90}, it indicates that
\be\la{cxb12}\ba
\|\na u\|_{L^\infty } &\le C\left(\|{\rm div}u\|_{L^\infty }+
\|\curl u\|_{L^\infty } \right)\ln(e+\|\na^2 u\|_{L^p }) +C\|\na
u\|_{L^2} +C \\
&\le C(1+\|\rho\dot{u}\|_{L^p})\ln(e+\|\rho\dot{u}\|_{L^p} +\|\na \rho\|_{L^p}).
\ea\ee
where in the second inequality, we have taken advantage of the fact
\be\la{cxb11}\ba
&\|\div u\|_{L^\infty}+\|\curl u\|_{L^\infty}\\
&\le C(\|F\|_{L^\infty}+\|P-P(\rho_s)\|_{L^\infty})+\|\curl u\|_{L^\infty} \\
&\le C(\|F\|_{L^2}+\|\nabla F\|_{L^p}+\|\curl u\|_{L^2}+\|\nabla \curl u\|_{L^p}+\|P-P(\rho_s)\|_{L^\infty}) \\
&\le C(\|\rho\dot{u}\|_{L^p}+1),
\ea\ee
which is due to Gagliardo-Nirenberg's inequality, \eqref{tb90}, \eqref{h19}, \eqref{zh19} and \eqref{uv1}.

Consequently,
\be\la{cxb13}\ba
(e+\|\nabla\rho\|_{L^p})_t\leq C\left(1+\|\rho\dot{u}\|_{L^p})\ln(e+\|\rho\dot{u}\|_{L^p})\ln(e+\|\nabla\rho\|_{L^p})\right)(e+\|\nabla\rho\|_{L^p}),\\
\ea\ee
which implies that
\be\la{cxb14}\ba
\left(\ln(e+\|\nabla\rho\|_{L^p})\right)_t\leq C(1+\|\rho\dot{u}\|_{L^p})\ln(e+\|\rho\dot{u}\|_{L^p})\ln(e+\|\nabla\rho\|_{L^p}).
\ea\ee
Notice that, by Lemma \ref{zc1},
\be\la{ruqr1}\ba &\int_0^T \|\rho\dot{u}\|_{L^q}^rdt\\
&\leq\int_0^T\|\rho^{1/2}\dot{u}\|_{L^2}^{(6-q)r/2q}(\|\nabla\dot{u}\|_{L^2}+\|\nabla u\|_{L^2}^2)^{(3q-6)r/2q}dt\\
&\leq\int_0^T\sigma^{-1/2}(\sigma\|\rho^{1/2}\dot{u}\|_{L^2}^2)^{\frac{(6-q)r}{4q}} (\sigma\|\nabla\dot{u}\|_{L^2}^2+\|\nabla u\|_{L^2}^4)^{\frac{(3q-6)r}{4q}}dt\\
&\leq(\int_0^T\sigma^\frac{-2qr}{4q-3qr+6r}dt)^{\frac{4q-3qr+6r}{4q}}(\int_0^T(\sigma\|\nabla\dot{u}\|_{L^2}^2+\|\nabla u\|_{L^2}^4)dt)^{\frac{4q}{3qr-6r}}\\
&\leq C,
\ea\ee
which together with \eqref{ruqr1} and Gronwall's inequality shows
\bnn\ba
\sup_{0\leq t\leq T}\|\nabla\rho\|_{L^{q}}\leq C.
\ea\enn
Combining this with \eqref{cxb12}, \eqref{ruqr1}, \eqref{cxb15}, \eqref{remark1} and Lemma \ref{zc1} proves \eqref{1cxb3}.
\end{proof}
\begin{lemma}\la{xle111}
 There exists a positive constant $C$ such that
\be\label{cx22}
\sup_{0\le t\le T}(\|\rho_t\|_{L^2}+\sigma\|\rho^{\frac{1}{2}}u_t\|_{L^2})+\int_0^T(\|\rho^{\frac{1}{2}}u_t\|_{L^2}^{2}+\sigma \|\nabla u_t\|_{L^2}^{2})dt\leq C.\ee
\end{lemma}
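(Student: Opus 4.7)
The plan is to prove the four bounds separately, starting with the simpler decomposition-based estimates and reserving the differentiated-equation energy estimate, which is the main work, for the term $\int_0^T\sigma\|\na u_t\|_{L^2}^2\,dt$. For $\|\rho_t\|_{L^2}$, the continuity equation gives $\rho_t=-\rho\,\div u-u\cdot\na\rho$, whence H\"older yields $\|\rho_t\|_{L^2}\le C\|\na u\|_{L^2}+C\|u\|_{L^6}\|\na\rho\|_{L^3}$; both terms are uniformly bounded on $[0,T]$ by \eqref{uv1} (which also controls $u$ in $L^\infty(0,T;L^6)$ by Sobolev embedding) and \eqref{1cxb3}. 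For the two $\rho^{1/2}u_t$ bounds, I would use $u_t=\dot u-u\cdot\na u$ to split
\[
\|\rho^{1/2}u_t\|_{L^2}^2\le 2\|\rho^{1/2}\dot u\|_{L^2}^2+C\|u\|_{L^6}^2\|\na u\|_{L^3}^2,
\]
so that $\int_0^T\|\rho^{1/2}u_t\|_{L^2}^2\,dt$ is controlled via \eqref{uv1} and $\int_0^T\|\na^2 u\|_{L^2}^2\,dt<\infty$ from \eqref{1cxb3}, while $\sup_t\sigma\|\rho^{1/2}u_t\|_{L^2}^2$ follows from the same splitting together with \eqref{uv2} and the bound on $\sigma\|u\|_{H^2}^2$ in \eqref{1cxb3}.

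The delicate piece $\int_0^T\sigma\|\na u_t\|_{L^2}^2\,dt$ comes from differentiating $(\ref{a1})_2$ in $t$, multiplying by $\sigma u_t$, and integrating over $\Omega$. Since $u\cdot n=0$ and $\curl u\times n=0$ on $\partial\Omega$ hold at every time, the same relations hold for $u_t$, and integration by parts gives
\[
\int\bigl(-\mu\Delta u_t-(\lambda+\mu)\na\div u_t\bigr)\cdot u_t\,dx=(\lambda+2\mu)\|\div u_t\|_{L^2}^2+\mu\|\curl u_t\|_{L^2}^2\ge c\|\na u_t\|_{L^2}^2
\]
by Lemma \ref{crle1}. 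Using $\rho_t=-\div(\rho u)$ and $P_t=-u\cdot\na P-\gamma P\,\div u$ to expand the remaining terms, the right-hand side reduces to convective, pressure, and forcing contributions, each bounded by $\varepsilon\sigma\|\na u_t\|_{L^2}^2$ plus time-integrable quantities controlled via Gagliardo-Nirenberg, Sobolev, and the earlier estimates \eqref{uv1}, \eqref{uv2}, \eqref{1cxb3}. A final Gronwall loop then closes the estimate, the trivial initial datum $\sigma\|\rho^{1/2}u_t\|_{L^2}^2\big|_{t=0}=0$ conveniently bypassing any compatibility assumption on $u_0$.

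The principal obstacle will be the convective terms $\int\rho(u_t\cdot\na u)\cdot u_t\,dx$ and $\int\rho_t(u\cdot\na u)\cdot u_t\,dx$, which couple $\|\na u_t\|_{L^2}$ with derivatives of $u$. I would handle them by combining $\|u_t\|_{L^6}\le C\|\na u_t\|_{L^2}$ (valid since $u_t\cdot n=0$ on $\partial\Omega$) with the interpolation $\|\na u\|_{L^3}\le C\|u\|_{H^2}^{1/2}\|u\|_{H^1}^{1/2}$, producing a contribution of the form $\varepsilon\sigma\|\na u_t\|_{L^2}^2+C\sigma\|u\|_{H^2}^2\|\rho^{1/2}u_t\|_{L^2}^2$; the first summand is absorbed on the left, while the coefficient $\sigma\|u\|_{H^2}^2$ is uniformly bounded by \eqref{1cxb3} and the factor $\|\rho^{1/2}u_t\|_{L^2}^2$ is integrable by the already-established $L^2$-in-time bound from the first part of the lemma, so Gronwall yields the desired estimate.
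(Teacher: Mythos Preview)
Your treatment of $\|\rho_t\|_{L^2}$ and the two $\rho^{1/2}u_t$ bounds matches the paper's proof essentially verbatim: continuity equation plus H\"older for the first, the decomposition $u_t=\dot u-u\cdot\na u$ together with \eqref{uv1}, \eqref{uv2}, \eqref{1cxb3} for the others.

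For $\int_0^T\sigma\|\na u_t\|_{L^2}^2\,dt$, however, you take a genuinely different route. You propose differentiating $(\ref{a1})_2$ in time, multiplying by $\sigma u_t$, and running an energy argument with Gronwall. This is correct and is in fact the machinery the paper uses later (Lemma~\ref{xle3}) at the next level of regularity. But for the present lemma the paper does something much cheaper: it applies the \emph{same} decomposition $u_t=\dot u-u\cdot\na u$ at the gradient level,
\[
\int_0^T\sigma\|\na u_t\|_{L^2}^2\,dt
\le C\int_0^T\sigma\bigl(\|\na\dot u\|_{L^2}^2+\|\na(u\cdot\na u)\|_{L^2}^2\bigr)\,dt
\le C\int_0^T\sigma\bigl(\|\na\dot u\|_{L^2}^2+\|\na u\|_{H^1}^4\bigr)\,dt,
\]
and then reads off the bound directly from \eqref{uv2} (which already gives $\int_0^T\sigma\|\na\dot u\|_{L^2}^2\,dt\le C$) and from \eqref{1cxb3} (which gives $\sup_t\sigma\|\na u\|_{H^1}^2\le C$ and $\int_0^T\|\na u\|_{H^1}^2\,dt\le C$). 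No differentiated equation, no boundary terms, no Gronwall. Your approach works but duplicates effort already invested in the $\dot u$ estimates of Lemma~\ref{zc1}; the paper's one-line reduction to those estimates is the cleaner path here. (Incidentally, in your final paragraph you don't actually need Gronwall: once you note $\sigma\|u\|_{H^2}^2\le C$ and $\int_0^T\|\rho^{1/2}u_t\|_{L^2}^2\,dt\le C$, the right-hand side is already integrable outright.)
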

\begin{proof}
First, by $\eqref{a1}_1$ and \eqref{1cxb3},
\bnn
\|\rho_t\|_{L^2}\leq \|u\|_{L^6}\|\nabla\rho\|_{L^3}+\|\nabla u\|_{L^2}\leq C.
\enn
Next, a direct calculation gives
\bnn
\|\rho^{\frac{1}{2}}u_t\|_{L^2}\leq C(\|\rho^{\frac{1}{2}}\dot u\|_{L^2}+\|\nabla u\|_{H^1})
\enn
which together with \eqref{1cxb3} leads to
\bnn
\sup_{0\leq t\leq T}\sigma\|\rho^{\frac{1}{2}}u_t\|_{L^2}+\int_0^T\|\rho^{\frac{1}{2}}u_t\|_{L^2}^{2}dt\leq C.
\enn
Similarly,
\bnn\ba
\int_0^T\sigma \|\nabla u_t\|_{L^2}^{2}dt&\leq C\int_0^T\sigma (\|\nabla \dot u\|_{L^2}^{2}+\|\nabla (u\cdot\nabla u)\|_{L^2}^{2})dt\\
&\leq C\int_0^T\sigma (\|\nabla \dot u\|_{L^2}^{2}+\|\nabla u\|_{H^1}^{4})dt.
\ea\enn
\end{proof}
\subsection{Estimates for classical solutions}
In this section, we always assume that the initial energy $C_0$ satisfies (\ref{xbh21}),  $\psi\in H^3$, and that the positive constant $C $ may depend on \bnn  T,\,\, \| g\|_{L^2},  \,\,\|\na u_0\|_{H^1},\,\,
 \|\n_0-\rho_s\|_{W^{2,q}}  ,  \,\, \|P(\n_0)-P(\rho_s)\|_{W^{2,q}} , \,\, \|\psi\|_{W^{2,q}},\enn
  besides $\mu$, $\lambda$, $\gamma$, $a$, $\inf_{\overline{\Omega}}\psi$, $\hat{\rho}$, $\Omega$ and $M$, where $q\in(3,6)$ and $g\in L^2$ is given as in \eqref{dt3}.
\begin{lemma}\la{xle1}
 There exists a positive constant $C,$ such that
\be\label{cxb2}
\sup_{0\le t\le T}\|\rho^{\frac{1}{2}}\dot{u}\|_{L^2}+\int_0^T\|\nabla\dot{u}\|_{L^2}^{2}dt\leq C,\ee
\be\label{cxb3}
\sup_{0\le t\le T}(\|\nabla\rho\|_{L^6}+\|u\|_{H^2})+\int_0^T(\|\nabla u\|_{L^\infty}+\|\nabla^{2} u\|_{L^6}^{2})dt\leq C.\ee
\end{lemma}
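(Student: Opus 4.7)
The strategy mirrors the two-step pattern used earlier in the paper: first promote the weighted $\sigma$-estimates of Lemma~\ref{zc1} to unweighted ones by exploiting the compatibility condition, then feed the resulting control of $\|\nabla\dot u\|_{L^2}$ into a Beale-Kato-Majda / transport loop for $\|\nabla\rho\|_{L^6}$. As a preliminary step I would evaluate $(\ref{a1})_2$ at $t=0$ and invoke the compatibility condition \eqref{dt3}, which gives $\rho_0\dot u(\cdot,0)=\mu\Delta u_0+(\mu+\lambda)\nabla\div u_0-\nabla P(\rho_0)+\rho_0\nabla\psi=-\rho_0^{1/2}g+\rho_0\nabla\psi$, and hence $\|\rho_0^{1/2}\dot u(\cdot,0)\|_{L^2}\le C(\|g\|_{L^2},\hat\rho,\|\nabla\psi\|_{L^2})$. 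This finite initial value is exactly what replaces the factor $\sigma$ used in Lemma~\ref{zc1}.

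For \eqref{cxb2} I would re-run the derivation leading to \eqref{ax401} with $m=0$, so that the $m\sigma^{m-1}\sigma'$ terms drop out and the inhomogeneous data at $t=0$ is controlled by the previous paragraph. Integrating in time, everything on the right-hand side is manageable from Lemma~\ref{le2}, Lemma~\ref{zc1} and the boundary estimate \eqref{Fnn1} except $\int_0^T\|\nabla u\|_{L^4}^4\,dt$. For this term I would apply \eqref{h18} at $p=4$ to get $\|\nabla u\|_{L^4}^4\le C\|\rho^{1/2}\dot u\|_{L^2}^3\|\nabla u\|_{L^2}+C$, and then, writing $M(T)\triangleq\sup_{[0,T]}\|\rho^{1/2}\dot u\|_{L^2}^2$, bound $\int_0^T\|\rho^{1/2}\dot u\|_{L^2}^3\,dt\le M(T)^{1/2}\int_0^T\|\rho^{1/2}\dot u\|_{L^2}^2\,dt\le CM(T)^{1/2}$. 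Gronwall's inequality then produces $M(T)\le C+CM(T)^{1/2}$, which closes to yield \eqref{cxb2}.

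For \eqref{cxb3} I would follow Lemma~\ref{xle11} at the endpoint $p=6$. The transport equation for $|\nabla\rho|^6$ gives $(\|\nabla\rho\|_{L^6})_t\le C(1+\|\nabla u\|_{L^\infty})\|\nabla\rho\|_{L^6}+C\|\nabla^2 u\|_{L^6}$, where $\|\nabla^2 u\|_{L^6}$ is controlled via the elliptic estimate \eqref{remark1} applied to the Lam\'e system \eqref{lames}, combined with $\|\rho\dot u\|_{L^6}\le C(\|\nabla\dot u\|_{L^2}+\|\nabla u\|_{L^2}^2)$ from \eqref{tb90}, yielding $\|\nabla^2 u\|_{L^6}\le C(\|\nabla\dot u\|_{L^2}+\|\nabla\rho\|_{L^6}+1)$. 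For the logarithmic factor I would use Lemma~\ref{le9} together with \eqref{cxb11}-\eqref{cxb12} to obtain $\|\nabla u\|_{L^\infty}\le C(1+\|\nabla\dot u\|_{L^2})\ln(e+\|\nabla\rho\|_{L^6}+\|\nabla\dot u\|_{L^2})+C$. Setting $\Phi(t)\triangleq e+\|\nabla\rho\|_{L^6}$, dividing through by $\Phi$ leads to $(\ln\Phi)'\le a(t)\ln\Phi+b(t)$ with $a,b\in L^1(0,T)$, because $\int_0^T\|\nabla\dot u\|_{L^2}^2\,dt\le C$ from \eqref{cxb2} and $(1+x)\ln(e+x)\le C(1+x^2)$. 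Gronwall then gives $\|\nabla\rho\|_{L^6}\le C$, from which $\|u\|_{H^2}\le C$, $\int_0^T\|\nabla^2 u\|_{L^6}^2\,dt\le C$ and $\int_0^T\|\nabla u\|_{L^\infty}\,dt\le C$ follow immediately from the elliptic estimate and the BKM inequality.

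\textbf{Main obstacle.} The delicate point is the closure of Step~2: the exponent $4$ in $\|\nabla u\|_{L^4}^4$ is critical in the interpolation \eqref{h18}, producing $\|\rho^{1/2}\dot u\|_{L^2}^3$, which is not a priori integrable in time. The absorption trick of pulling one factor of $M(T)^{1/2}$ out of the integral and moving it to the left only succeeds because the compatibility condition makes the initial value $\|\rho_0^{1/2}\dot u(\cdot,0)\|_{L^2}$ finite; without it the induction base for Gronwall is unavailable. Once \eqref{cxb2} is established, the BKM/transport loop in Step~3 is routine.
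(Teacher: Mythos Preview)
Your proposal is correct and follows essentially the same route as the paper. The only cosmetic difference is in Step~2: rather than pulling out $M(T)^{1/2}$ from $\int_0^T\|\rho^{1/2}\dot u\|_{L^2}^3\,dt$ and closing algebraically via $M(T)\le C+CM(T)^{1/2}$, the paper uses \eqref{h18} and Young's inequality to rewrite $\|\nabla u\|_{L^4}^4\le C\|\rho^{1/2}\dot u\|_{L^2}^2\cdot\|\rho^{1/2}\dot u\|_{L^2}^2+C(\cdots)$, so that the right-hand side of \eqref{ax401} with $m=0$ takes the standard Gronwall form $y'\le a(t)y+b(t)$ with $a(t)=C(\|\rho^{1/2}\dot u\|_{L^2}^2+\|\nabla u\|_{L^2}^4+1)\in L^1(0,T)$ by \eqref{uv1} and \eqref{a16}; this is what the paper means by ``by Gronwall's inequality.'' Your Step~3 (the BKM/transport loop at $p=6$) is exactly the paper's argument via \eqref{cxb9}--\eqref{cxb14} and \eqref{du21}.
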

\begin{proof} First, choosing $m=0$ in \eqref{ax401}, by \eqref{h18}, we have
\be\la{cxb5} \ba
&\left(\|\rho^{\frac{1}{2}}\dot{u}\|_{L^2}^2\right)_t+\|\nabla\dot{u}\|_{L^2}^2\\
& \leq -\left(\int_{\partial\Omega}(u\cdot\nabla n\cdot u)Fds\right)_t+C\|\rho^{\frac{1}{2}}\dot{u}\|_{L^2}^2(\|\nabla u\|_{L^2}^4+1) \\
&\quad+C(\|\nabla u\|_{L^2}^2+\|\nabla u\|_{L^2}^6+\|\nabla u\|_{L^4}^4)\\
& \leq -\left(\int_{\partial\Omega}(u\cdot\nabla n\cdot u)Fds\right)_t+C\|\rho^{\frac{1}{2}}\dot{u}\|_{L^2}^2(\|\rho^{\frac{1}{2}}\dot{u}\|_{L^2}^2+\|\nabla u\|_{L^2}^4+1) \\
&\quad+C(\|\nabla u\|_{L^2}^2+\|\nabla u\|_{L^2}^6+\|P-P(\rho_s)\|_{L^2}^4+\|P-P(\rho_s)\|_{L^4}^4+\|\n - \rho_s \|_{L^2}^2 ).
 \ea\ee
By Gronwall's inequality and the compatibility condition \eqref{dt3}, we deduce \eqref{cxb2} from \eqref{cxb5}, \eqref{uv1} and \eqref{Fnn1}. Furthermore, by \eqref{tb90},
\be\la{du21}\ba \int_0^T \|\rho\dot{u}\|_{L^6}^2dt
\leq\int_0^T\|\nabla\dot{u}\|_{L^2}^2+\|\nabla u\|_{L^2}^4dt\leq C.
\ea\ee
As a result, setting $p=6$ in \eqref{cxb14}, and by Gronwall's inequality again, we have
\be\la{cxb15}\ba
\sup_{0\leq t\leq T}\|\nabla\rho\|_{L^{6}}\leq C .
\ea\ee
Finally, by \eqref{cxb12} and \eqref{remark1}, together with \eqref{tb90}, \eqref{uv1} and \eqref{cxb2}, we have
\bnn\ba
\int_0^T\|\nabla u\|_{L^\infty}dt\leq C,\,\,\int_0^T\|\nabla^{2} u\|_{L^6}^{2}dt\leq C\,\,and\,\,\sup_{0\leq t\leq T}\|u\|_{H^{2}}\leq C.
\ea\enn
This completes the proof of Lemma \ref{xle1}.
\end{proof}

\begin{lemma}\la{xle2}
 There exists a positive constant $C$ such that
\be\la{cxb17}\ba
\sup_{0\le t\le T}\|\rho^{\frac{1}{2}}u_t\|_{L^2}^2 + \ia\int|\nabla u_t|^2dxdt\le C,
\ea\ee
\be\la{cxb18}\ba
\sup_{0\le t\le T}(\|{\rho- \rho_s}\|_{H^2} +
 \|{P- P(\rho_s)}\|_{H^2})\le C.
\ea\ee
\end{lemma}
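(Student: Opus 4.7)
The plan is to establish \eqref{cxb17} first, which is almost a direct consequence of Lemmas \ref{xle1}, then to tackle \eqref{cxb18}, where the real work lies in the $H^2$ piece.

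For \eqref{cxb17}, I would use $u_t = \dot u - u\cdot\nabla u$ to write
\[
\|\rho^{1/2}u_t\|_{L^2} \le \|\rho^{1/2}\dot u\|_{L^2} + \|\rho\|_{L^\infty}^{1/2}\|u\|_{L^6}\|\nabla u\|_{L^3},
\]
where the first term is uniformly bounded by \eqref{cxb2} and the second by the $H^2$ bound on $u$ and the $L^\infty$ bound on $\rho$ supplied by \eqref{cxb3} and \eqref{lv102}. For the dissipation piece, $\nabla u_t = \nabla\dot u - \nabla(u\cdot\nabla u)$ gives $\|\nabla u_t\|_{L^2} \le \|\nabla\dot u\|_{L^2} + C(\|\nabla u\|_{L^4}^2 + \|u\|_{L^\infty}\|\nabla^2 u\|_{L^2})$, so $\int_0^T\|\nabla u_t\|_{L^2}^2\,dt$ is controlled by $\int_0^T\|\nabla\dot u\|_{L^2}^2\,dt$ (from \eqref{cxb2}) plus a uniformly bounded quantity from \eqref{cxb3}.

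For \eqref{cxb18}, the $L^2$ bound on $\rho-\rho_s$ follows from \eqref{a16} together with $G(\rho,\rho_s)\sim(\rho-\rho_s)^2$, and on $P-P(\rho_s)$ from $P=a\rho^\gamma$ with $\rho\le 2\hat\rho$ and $\underline\rho\le\rho_s\le\bar\rho$. The $H^1$ bounds come directly from $\|\nabla\rho\|_{L^6}\le C$ in \eqref{cxb3} together with $\rho\le 2\hat\rho$. For the $H^2$ piece, I would differentiate the continuity equation and the pressure equation $\partial_t P + u\cdot\nabla P + \gamma P\,\mathrm{div}\,u = 0$ twice, multiply the resulting equations by $\nabla^2(\rho-\rho_s)$ and $\nabla^2(P-P(\rho_s))$ respectively, and integrate. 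After integrating by parts in the convection term using $u\cdot n|_{\partial\Omega}=0$, the commutator-type terms are bounded by $C\|\nabla u\|_{L^\infty}(\|\nabla^2\rho\|_{L^2}^2+\|\nabla^2 P\|_{L^2}^2)$ plus lower-order contributions that can be handled by $\|\nabla\rho\|_{L^6}$ (bounded) paired with $\|\nabla^2 u\|_{L^3}$ (bounded via the $H^2$ embedding from \eqref{cxb3}). What remains is the source term $\int\rho\nabla^2\mathrm{div}\,u\cdot\nabla^2(\rho-\rho_s)\,dx$ (and its pressure analogue).

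To handle $\|\nabla^2\mathrm{div}\,u\|_{L^2}$, I would apply the Lamé estimate \eqref{remark2} with $p=2$ to obtain
\[
\|\nabla^3 u\|_{L^2} \le C\bigl(1 + \|\nabla(\rho\dot u)\|_{L^2} + \|\nabla^2(P-P(\rho_s))\|_{L^2}\bigr),
\]
and then bound $\|\nabla(\rho\dot u)\|_{L^2} \le \|\nabla\rho\|_{L^3}\|\dot u\|_{L^6} + \|\rho\|_{L^\infty}\|\nabla\dot u\|_{L^2} \le C(1+\|\nabla\dot u\|_{L^2})$ by \eqref{cxb3} and \eqref{tb90}. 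Combining everything, setting $\Psi(t) \triangleq \|\nabla^2(\rho-\rho_s)\|_{L^2}^2 + \|\nabla^2(P-P(\rho_s))\|_{L^2}^2$, yields a differential inequality of the form
\[
\frac{d}{dt}\Psi \le C\bigl(1+\|\nabla u\|_{L^\infty}+\|\nabla\dot u\|_{L^2}^2\bigr)(1+\Psi),
\]
and Gronwall closes the estimate since $\int_0^T\|\nabla u\|_{L^\infty}\,dt$ and $\int_0^T\|\nabla\dot u\|_{L^2}^2\,dt$ are already controlled by \eqref{cxb2}--\eqref{cxb3}. The main obstacle I anticipate is the bookkeeping of the commutator $\nabla^2(\rho\,\mathrm{div}\,u) - \rho\nabla^2\mathrm{div}\,u$ and $\nabla^2(u\cdot\nabla\rho) - u\cdot\nabla\nabla^2\rho$, which must be expressed in a form that is absorbed either by the leading $\|\nabla u\|_{L^\infty}\Psi$ term or by the already-bounded quantities, without creating a term of the form $\|\nabla^2\rho\|_{L^2}$ times a factor whose time integral is not a priori controlled.
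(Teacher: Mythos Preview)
Your proposal is correct and follows essentially the same route as the paper. For \eqref{cxb17} the paper does exactly what you describe (with the cosmetic variant $\|u\|_{L^4}\|\nabla u\|_{L^4}$ in place of your $\|u\|_{L^6}\|\nabla u\|_{L^3}$), and for \eqref{cxb18} the paper also differentiates the transport equations for $\rho$ and $P$ twice, invokes the Lam\'e estimate \eqref{remark2} at $p=2$ together with the bound $\|\nabla(\rho\dot u)\|_{L^2}\le C(1+\|\nabla\dot u\|_{L^2})$, and closes by Gronwall using $\int_0^T\|\nabla u\|_{L^\infty}\,dt$ and $\int_0^T\|\nabla\dot u\|_{L^2}^2\,dt$ from Lemma~\ref{xle1}; the commutator bookkeeping you flag is indeed routine and is absorbed exactly as you indicate.
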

\begin{proof} By Lemma \ref{xle1}, a straightforward calculation shows that
\be\la{cxb19}\ba
\|\rho^{\frac{1}{2}}
u_t\|_{L^2}^2 &\le \|\rho^{\frac{1}{2}}\dot u \|_{L^2}^2+\|\n^{\frac{1}{2}}u\cdot\na u\|_{L^2}^2\\
&\le C+C\|u\|_{L^4}^2\|\nabla u\|_{L^4}^2 \\
&\le C+C\|\nabla u\|_{L^2}^2\|u\|_{H^2}^2 \\
&\le C ,
\ea\ee
and that
\be \la{cxb20}\ba
 \int_0^T\|\nabla u_t\|_{L^2}^2dt &\le\int_0^T\|\nabla \dot
u\|_{L^2}^2dt + \int_0^T\|\nabla(u\cdot\nabla u)\|_{L^2}^2dt \\
&\le C+\int_0^T\|\nabla u\|_{L^4}^4+\|u\|_{L^\infty}^2\|\nabla^{2}u\|_{L^2}^2dt  \\
&\le C+C\int_0^T(\|\nabla^{2}u\|_{L^2}^4+\|\nabla u\|_{H^1}^{2}\|\nabla^{2}u\|_{L^2}^2)dt \\
&\le C ,
\ea\ee
and then \eqref{cxb17} holds.

Using \eqref{Pu2},    $(\ref{a1})_1$, \eqref{remark1}, \eqref{remark2} and Lemma \ref{xle1}, we have
\be \la{cxb22}\ba
&\frac{d}{dt}\left(\|\nabla^2P\|_{L^2}^2 +\|\nabla^2\rho\|_{L^2}^2\right)\\
&\le C(1+\|\nabla u\|_{L^\infty})(\|\nabla^2P\|_{L^2}^2 +\|\nabla^2\rho
\|_{L^2}^2) + C\|\nabla\dot{u}\|_{L^2}^2 + C.
\ea\ee
Combining this with   Gronwall's inequality and Lemma \ref{xle1}  implies that
  \bnn \sup_{0\le t\le T} {\left(\|\nabla^2P\|_{L^2}^2
+\|\nabla^2\rho\|_{L^2}^2 \right)}\le C. \enn Thus we finish the proof of
Lemma \ref{xle2}.
\end{proof}
\begin{lemma}\la{xle3}
There exists a positive constant $C,$ such that
 \be\la{cxb24}
   \sup\limits_{0\le t\le T}\left(
   \|\n_t\|_{H^1}+\|P_t\|_{H^1}\right)
    + \int_0^T\left(\|\n_{tt}\|_{L^2}^2+\|P_{tt}\|_{L^2}^2\right)dt
\le C,
  \ee
%\be\la{nq1}  \sup\limits_{0\le t\le T}\si\|\nabla u_t\|_{L^2}^2    + \int_0^T\si\int\rho u_{tt}^2dxdt\le C.
\be\la{cxb25}
   \sup\limits_{0\le t\le T}\si \|\nabla u_t\|_{L^2}^2
    + \int_0^T\si\|\rho^{\frac{1}{2}}u_{tt}\|_{L^2}^2dt
\le C.
  \ee
\end{lemma}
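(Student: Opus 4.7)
The plan is to derive the three statements contained in \eqref{cxb24}--\eqref{cxb25} in the order: (i) the $L^\infty(0,T;H^1)$ bounds on $\n_t,P_t$; (ii) the $L^2(0,T;L^2)$ bounds on $\n_{tt},P_{tt}$; (iii) the weighted bound for $u_t$ and $u_{tt}$. The ordering is essential because step (iii) will make heavy use of the $L^2$-in-time control of $P_{tt}$ produced in step (ii), while the transport-type estimates (i) and (ii) rest only on the bounds already in Lemmas \ref{xle1}--\ref{xle2}.

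For steps (i) and (ii), I would use the transport identity $\n_t=-u\cdot\na\n-\n\,\div u$ together with its analogue $P_t=-u\cdot\na P-\gamma P\,\div u$ from \eqref{Pu2}. Plain H\"older combined with $\|u\|_{H^2}+\|\na\n\|_{L^6}+\|\na^2\n\|_{L^2}\le C$ (Lemmas \ref{xle1}--\ref{xle2}) yields a bound on $\|\n_t\|_{L^2}$, while applying $\na$ to the same identity and using $\int_0^T\|\na u\|_{L^\infty}dt\le C$ gives a bound on $\|\na\n_t\|_{L^2}$. A further time differentiation gives
\bnn
\n_{tt}=-u\cdot\na\n_t-\n_t\,\div u-u_t\cdot\na\n-\n\,\div u_t,
\enn
whose $L^2$-norm squared integrates in $t$ because the only delicate factor, $\n\,\div u_t$, is controlled by $\int_0^T\|\na u_t\|_{L^2}^2dt\le C$ from Lemma \ref{xle2}. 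The same reasoning for $P_t,P_{tt}$ completes \eqref{cxb24}.

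For \eqref{cxb25}, differentiate the momentum equation \eqref{m11} in $t$ to obtain
\bnn
\n u_{tt}+\n_t u_t+\n_t u\cdot\na u+\n u_t\cdot\na u+\n u\cdot\na u_t-\mu\Delta u_t-(\mu+\lm)\na\div u_t+\na P_t=\n_t\na\psi,
\enn
multiply by $\sigma u_{tt}$, and integrate over $\O$. Since $n=n(x)$ is time-independent, the slip conditions \eqref{ch1} are inherited by $u_t$ and $u_{tt}$, so integration by parts of $\mu\na\times\curl u_t-(\lm+2\mu)\na\div u_t$ against $\sigma u_{tt}$ produces no boundary contribution and gives on the left
\bnn
\sigma\|\n^{1/2}u_{tt}\|_{L^2}^2+\tfrac12\tfrac{d}{dt}\Bigl(\sigma\bigl[\mu\|\curl u_t\|_{L^2}^2+(\lm+2\mu)\|\div u_t\|_{L^2}^2\bigr]\Bigr),
\enn
up to a harmless $\sigma'$ term. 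The pressure contribution $-\int\sigma P_t\,\div u_{tt}\,dx$ must be shifted by integration by parts in $t$, producing $\int\sigma P_{tt}\,\div u_t\,dx$ which is absorbed via \eqref{cxb24} and Cauchy--Schwarz. The remaining convective and force terms are estimated by H\"older and Sobolev; for example $|\int\sigma\n u\cdot\na u_t\cdot u_{tt}\,dx|\le\varepsilon\sigma\|\n^{1/2}u_{tt}\|_{L^2}^2+C\|u\|_{L^\infty}^2\sigma\|\na u_t\|_{L^2}^2$, with analogous splits for the terms carrying $\n_t$ using $\|\n_t\|_{L^3}\le C$ from \eqref{cxb24} and $\|u_t\|_{L^6}\le C\|\na u_t\|_{L^2}$. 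Gronwall's inequality, combined with the compatibility condition \eqref{dt3} which controls the starting value of $\sigma\|\na u_t\|_{L^2}^2$ as $t\to 0^+$, closes \eqref{cxb25}.

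The principal obstacle is the pressure term: $\int\sigma P_t\,\div u_{tt}\,dx$ carries a time derivative on $u_{tt}$ and cannot be estimated directly, forcing the integration by parts in $t$ and thereby the preliminary $L^2$-in-time control of $P_{tt}$, which is precisely why step (ii) is performed first. A secondary technical point is that each convective commutator generated by time-differentiating $\n u\cdot\na u$ produces a factor that only becomes closable once one exploits both $u\in L^\infty(0,T;H^2)$ and $\int_0^T\|\na u_t\|_{L^2}^2\,dt\le C$, so that the Gronwall step on $\sigma\|\na u_t\|_{L^2}^2$ absorbs with constants independent of the solution.
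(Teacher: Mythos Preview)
Your treatment of \eqref{cxb24} is essentially the paper's argument and is fine (the mention of $\int_0^T\|\na u\|_{L^\infty}dt$ for bounding $\sup_t\|\na\n_t\|_{L^2}$ is a slip---you need a pointwise-in-time bound, and $\|u\|_{L^\infty}\le C\|u\|_{H^2}\le C$ together with $\|\na u\|_{L^3}\le C$ does the job).

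The real gap is in step (iii). You correctly observe that the pressure term $-\int\si P_t\,\div u_{tt}\,dx$ must be shifted by integration by parts in $t$. But you then propose to estimate all remaining $\n_t$-terms directly ``by H\"older and Sobolev \dots using $\|\n_t\|_{L^3}\le C$ and $\|u_t\|_{L^6}\le C\|\na u_t\|_{L^2}$.'' This does not close: the terms
\[
\int\si\n_t\,u_t\cdot u_{tt}\,dx,\qquad \int\si\n_t\,(u\cdot\na u)\cdot u_{tt}\,dx,\qquad \int\si\n_t\,\na\psi\cdot u_{tt}\,dx
\]
all pair $\n_t$ (which carries no factor of $\n$) against the bare $u_{tt}$. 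A direct H\"older split produces $\|u_{tt}\|_{L^2}$, which in the presence of vacuum is \emph{not} controlled by the only available quantity $\|\n^{1/2}u_{tt}\|_{L^2}$; and at this stage you have no control on $\|\na u_{tt}\|_{L^2}$ either, so Poincar\'e is unavailable.

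The paper resolves this by integrating \emph{all} of these $\n_t$-terms by parts in $t$ (exactly as you did for the pressure), collecting them together with the pressure boundary term into a single quantity
\[
I_0\;=\;-\int\n_t|u_t|^2\,dx-2\int\n_t\,u\cdot\na u\cdot u_t\,dx+2\int P_t\,\div u_t\,dx+\int\n_t\,\na\psi\cdot u_t\,dx,
\]
so that the differential identity reads $\frac{d}{dt}H(t)+2\|\n^{1/2}u_{tt}\|_{L^2}^2=\frac{d}{dt}I_0+\sum_{i=1}^6 I_i$, with each $I_i$ now pairing $u_t$ (not $u_{tt}$) against $\n_{tt}$ or $P_{tt}$. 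Crucially, to bound $|I_0|$ pointwise and to handle $I_1=\int\n_{tt}|u_t|^2\,dx$, one then rewrites $\n_t=-\div(\n u)$ (respectively $\n_{tt}=-\div(\n u)_t$) and integrates by parts in \emph{space}, reinstating a factor of $\n$ so that interpolation between $\|\n^{1/2}u_t\|_{L^2}$ and $\|\na u_t\|_{L^2}$ becomes available. Without this two-step procedure (time-IBP followed by the continuity-equation rewrite) the vacuum prevents the argument from closing.

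A minor side remark: the compatibility condition \eqref{dt3} is not needed here to control the initial value of $\si\|\na u_t\|_{L^2}^2$, since $\si(0)=0$ already kills it; its role was exhausted in Lemma~\ref{xle1}.
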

\begin{proof} By (\ref{Pu2}) and Lemma \ref{xle1},
\be \la{cxb26}
\|P_t\|_{L^2}\le
C\|u\|_{L^\infty}\|\nabla P\|_{L^2}+C\|\nabla u\|_{L^2}\le C.
\ee
Differentiating (\ref{Pu2}) yields
\bnn
\nabla P_t+u\cdot\nabla\nabla P+\nabla u\cdot\nabla P+\ga \nabla P {\rm div}u+\ga P  \nabla{\rm div}u=0.
\enn
Hence, by Lemmas \ref{xle1} and \ref{xle2},
\bn\la{cxb27} \|\nabla P_t\|_{L^2}\le C\|u\|_{L^\infty}\|\nabla^2
P\|_{L^2}+C\|\nabla u\|_{L^3}\|\nabla P\|_{L^6}+C\|\nabla^2
u\|_{L^2}\le C,\en
which together with (\ref{cxb26}) yields
\bn \la{cxb28}\sup_{0\le t\le T}\|P_t\|_{H^1}\le C.
\en
By \eqref{Pu2} again, we find that $P_{tt}$ satisfies
\be\la{cxb29} P_{tt} + \gamma P_t{\rm div}u +
\gamma P{\rm div}u_t + u_t\cdot\nabla P + u\cdot\nabla P_t = 0.
\ee
Multiplying \eqref{cxb29} by $P_{tt}$ and integrating over $\Omega\times[0,T],$ by \eqref{cxb28}, Lemmas \ref{xle1} and \ref{xle2}, we obtain that
\bnn \ba
&\int_0^T\|P_{tt}\|_{L^2}^2dt \\
& = -\int_0^T\int\gamma P_{tt}P_t\div udxdt - \int_0^T\int\gamma P_{tt}P\div u_tdxdt  \\
& \quad - \int_0^T\int P_{tt}u_t\cdot\nabla Pdxdt - \int_0^T\int P_{tt}u\cdot\nabla P_tdxdt \\
& \le C\int_0^T\|P_{tt}\|_{L^2}(\|P_{t}\|_{L^3}\|\nabla u\|_{L^6}+\|\nabla u_t\|_{L^2}+\|u_t\|_{L^3}\|\nabla P\|_{L^6}+\|u\|_{L^\infty}\|\nabla P_{t}\|_{L^2})dt\\  & \le C\int_0^T\|P_{tt}\|_{L^2}(1+\|\nabla u_t\|_{L^2})dt\\
& \le \frac{1}{2}\int_0^T\|P_{tt}\|_{L^2}^2dt+C,
\ea\enn
which gives
$$\int_0^T\|P_{tt}\|_{L^2}^2dt \le C .$$
We can deal with $\n_t$ and
$\n_{tt}$ similarly and get (\ref{cxb24}).

Finally, we will prove (\ref{cxb25}). Introducing the function
$$H(t)=(\lambda+2\mu)\int(\div u_t)^{2}dx+\mu\int|\curl u_t|^{2}dx ,$$
using $u_t\cdot n = 0$ on $\partial\Omega$ and  Lemma \ref{crle1}, one has
\be\ba\la{uv10}
\|\nabla u_t\|_{L^2}^2\leq C H(t).
\ea\ee
Differentiating  $(\ref{a1})_2$  with respect to $t $ and multiplying by
$u_{tt},$ we have
\be\la{cxb34} \ba
&\frac{d}{dt}H(t)+2\int\rho|u_{tt}|^2dx  \\
&=\frac{d}{dt}\left(-\int\rho_t|u_t|^{2}dx-2\int\rho_tu\cdot\nabla u\cdot u_tdx+2\int P_t\div u_tdx+\int\rho_t\nabla\psi\cdot u_tdx\right) \\
&\quad +\int\rho_{tt}|u_t|^{2}dx + 2\int(\rho_tu\cdot\nabla u)_t\cdot u_tdx-2\int\rho u_t\cdot\nabla u\cdot u_{tt}dx \\
&\quad -2\int\rho u\cdot\nabla u_t\cdot u_{tt}dx - 2\int P_{tt}\div u_tdx-2\int \rho_{tt}\nabla\psi\cdot u_t dx \\
&\triangleq\frac{d}{dt}I_0 + \sum\limits_{i=1}^6I_i .
\ea \ee
We have to estimate $I_i$ $(i=0,1,\cdots, 6)$ one by one.
It follows from $(\ref{a1})_1$, \eqref{tb90}, \eqref{cxb2}, \eqref{cxb3}, \eqref{uv1}, \eqref{cxb17},
\eqref{cxb24}, \eqref{uv10} and Sobolev's and Poincar\'{e}'s  inequalities that
\be \ba \la{cxb35}|
I_0|& =\left|-\int_{
}\rho_t |u_t|^2 dx- 2\int_{ }\rho_t u\cdot\nabla u\cdot u_tdx+
2\int_{ }P_t {\rm div}u_t+\int\rho_t\nabla\psi\cdot u_tdxdx\right|\\
&\le \left|\int_{ } {\rm
div}(\rho u)\,|u_t|^2dx\right|+C\norm[L^3]{\rho_t}\| u\|_{L^\infty}\|\nabla u\|_{L^2}
\norm[L^6]{u_t}+C\| P_t\|_{L^2}\|\nabla u_t\|_{L^2}\\
&\quad+C\norm[L^3]{\rho_t}|\nabla \psi\|_{L^6}
\norm[L^2]{u_t}\\
&\le C \int_{}  |u||\n u_t||\nabla u_t| dx +C\|\nabla u_t\|_{L^2} \\
&\le C\|u\|_{L^6}\|\n^{1/2} u_t\|_{L^2}^{1/2}\|u_t\|_{L^6}^{1/2}\|\nabla
u_t\|_{L^2} +C\|\nabla u_t\|_{L^2}\\
&\le C\|\nabla u\|_{L^2}\|\n^{1/2} u_t\|_{L^2}^{1/2}\|\nabla u_t\|_{L^2}^{3/2}+C\|\nabla u_t\|_{L^2}\\
%&\le \de\|\nabla u_t\|_{L^2}^2+C(\de).
&\le \frac{1}{2}H(t)+C,
\ea\ee
\be \la{cxb36}\ba
|I_1|&=\left|\int_{ }\rho_{tt}\, |u_t|^2 dx\right|= \left|\int_{ }\div(\rho u)_t\,|u_t|^2 dx\right|= 2\left|\int_{ }(\rho_tu + \rho u_t)\cdot\nabla u_t\cdot u_tdx\right|\\
& \le  C\left(\norm[H^1]{\rho_t}\norm[H^2]{u}
  +\norm[L^2]{\rho^{{1/2}}u_t}^{\frac{1}{2}}\|\nabla u_t\|_{L^2}^{\frac{1}{2}}\right)\|\nabla u_t\|_{L^2}^2 \\
& \le C\|\nabla u_t\|_{L^2}^4+C\|\nabla u_t\|_{L^2}^2+C\\
& \le C\|\nabla u_t\|_{L^2}^2H(t)+C\|\nabla u_t\|_{L^2}^2+C,
\ea \ee
\be \la{cxb37}\ba
|I_2|&=2\left|\int_{ }\left(\rho_t u\cdot\nabla u \right)_t\cdot u_{t}dx\right|\\
&= 2\left|  \int_{ }\left(\rho_{tt} u\cdot\nabla u\cdot u_t +\rho_t
u_t\cdot\nabla u\cdot u_t+\rho_t u\cdot\nabla u_t\cdot
u_t\right)dx\right|\\
&\le\norm[L^2]{\rho_{tt}}\norm[L^3]{u\cdot\nabla u}\norm[L^6]{u_t}+\norm[L^2]{\rho_t}\|u_t\|_{L^6}^2\norm[L^6]{\nabla u} \\
&\quad+\norm[L^3]{\rho_t}\norm[L^{\infty}]{u}\norm[L^2]{\nabla u_t}\norm[L^6]{u_t}\\
& \le C\norm[L^2]{\rho_{tt}}^2 + C\norm[L^2]{\nabla u_t}^2, \ea \ee
\be\ba\la{cxb38}
|I_3|+|I_4|&= 2\left| \int_{ }\rho u_t\cdot\nabla
u\cdot u_{tt} dx\right| +2\left| \int_{ }\rho u\cdot\nabla u_t\cdot
u_{tt} dx\right|\\& \le   C\|\n^{1/2}u_{tt}\|_{L^2}\left(
\|u_t\|_{L^6}\|\na u\|_{L^3}+\|u\|_{L^\infty}\|\na
u_t\|_{L^2}\right) \\& \le   \norm[L^2]{\rho^{{1/2}}u_{tt}}^2 +
C\norm[L^2]{\nabla u_t}^2, \ea\ee
and
\be\ba\la{cxb39}
|I_5|+|I_6|&=2\left|\int_{ }P_{tt}{\rm div}u_tdx\right|+|2\int \rho_{tt}\nabla\psi\cdot u_t dx|\\&\le C
\norm[L^2]{P_{tt}}\norm[L^2]{{\rm div}u_t}++\norm[L^3]{\rho_t}\norm[L^2]{\nabla u_t}\norm[L^6]{\nabla\psi}\\& \le
C\norm[L^2]{P_{tt}}^2+C\norm[L^2]{\rho_{tt}}^2+ C\norm[L^2]{\nabla u_t}^2.
\ea\ee
% Due to the regularity of the local solution, (\ref{ba1}), $t\nabla u_t\in C([0,T_*];L^2)$. Thus
%\be \la{sp16}\ba \norm[L^2 ]{\nabla u_t(\cdot,T_*/2)}  &\le \frac{2}{T_*}\norm[L^{\infty}(0,T_*;L^2)]{t\nabla u_t}\\ &\le C, \ea\ee where $C$ may also
Consequently, along with \eqref{cxb36}-\eqref{cxb39}, and by \eqref {cxb34}, we have
\bnn\la{uv11} \ba
&\frac{d}{dt}(\sigma H(t)-\sigma I_0)+\sigma\int\rho|u_{tt}|^{2}dx \\
&\le C(1+\|\nabla u_t\|_{L^2}^2)\sigma H(t)+C(1+\norm[L^2]{\nabla u_t}^2+\|\rho_{tt}\|_{L^2}^2+\|P_{tt}\|_{L^2}^2).
\ea \enn
By Gronwall's inequality, \eqref{cxb17}, \eqref{cxb24} and \eqref{cxb35},
\bnn\la{uv12} \ba
&\sup_{0\le t\le T}(\sigma H(t))+\int_0^T\sigma\|\rho^{\frac{1}{2}}u_{tt}\|_{L^2}^2dt\le C .
\ea \enn
which together with \eqref{uv10}, gives \eqref{cxb25}.
\end{proof}
\begin{lemma}\la{xle4}
There exists a positive constant $C$ so that for any $q\in(3,6),$
\be\la{uv15}\ba\sup_{t\in[0,T]} \si \|\nabla u\|_{H^2}^2  +\ia \left(\|\nabla  u\|_{H^2}^2+\|\na^2
u\|^{p_0}_{W^{1,q}}+\si\|\na u_t\|_{H^1}^2\right)dt\le C,\ea \ee \be\la{uv14}\ba \sup_{t\in[0,T]}\left(\|\rho- \rho_s\|_{W^{2,q}} +\|P-P(\rho_s)\|_{W^{2,q}}\right)\le C,\ea \ee
where $p_0=\frac{9q-6}{10q-12}\in(1,\frac{7}{6}).$
\end{lemma}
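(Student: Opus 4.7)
The proof closes the a priori chain at the next level of regularity through three interlocking estimates, each obtained by applying the Lam\'e regularity of Lemma \ref{zhle} to a suitably differentiated equation. A decisive observation is that differentiating the slip conditions \eqref{ch1} in $t$ yields $u_t\cdot n=0$ and $\curl u_t\times n=0$ on $\partial\Omega$, so $u_t$ lies in the natural domain of Lemma \ref{zhle}.

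I would first establish $\int_0^T\sigma\|\nabla u_t\|_{H^1}^2\,dt\le C$ by differentiating $(\ref{a1})_2$ in $t$ and recasting the result as the Lam\'e system
\[
-\mu\Delta u_t-(\lambda+\mu)\nabla\div u_t=-\rho u_{tt}-\rho_tu_t-(\rho_tu+\rho u_t)\cdot\nabla u-\rho u\cdot\nabla u_t-\nabla P_t+\rho_t\nabla\psi.
\]
By Lemma \ref{zhle}(1), $\|u_t\|_{H^2}$ is controlled by the $L^2$ norm of the right-hand side plus $\|u_t\|_{L^2}$; every term is already bounded via Lemmas \ref{xle1}--\ref{xle3} and Sobolev embedding, while the only non-pointwise term, $\|\rho u_{tt}\|_{L^2}$, integrates against $\sigma$ by \eqref{cxb25}.

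Next, for the $H^2$-bound on $\nabla u$, I would treat $(\ref{a1})_2$ as a Lam\'e system with source $-\rho\dot u-\nabla(P-P(\rho_s))+(\rho-\rho_s)\nabla\psi$ and apply Lemma \ref{zhle}(1) at the $H^1$ level. Splitting $\dot u=u_t+u\cdot\nabla u$, using Lemma \ref{uup1} for $\|\dot u\|_{L^6}$ and the uniform $H^2$-bound on $u$ from Lemma \ref{xle1}, one obtains $\|\rho\dot u\|_{H^1}\le C(1+\|\nabla\dot u\|_{L^2})$. Combining $\int_0^T\|\nabla\dot u\|_{L^2}^2\,dt\le C$ (Lemma \ref{xle1}) with $\sup_t\sigma\|\nabla u_t\|_{L^2}^2\le C$ (Lemma \ref{xle3}) and $\|P-P(\rho_s)\|_{H^2}\le C$ (Lemma \ref{xle2}) yields both $\sup_t\sigma\|\nabla u\|_{H^2}^2\le C$ and $\int_0^T\|\nabla u\|_{H^2}^2\,dt\le C$.

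The most delicate step is the $W^{2,q}$-bound on $(\rho,P)$ together with the integral bound on $\|\nabla^2 u\|_{W^{1,q}}^{p_0}$. Twice differentiating $(\ref{a1})_1$ and \eqref{Pu2} yields transport equations for $\nabla^2\rho$ and $\nabla^2 P$ whose combination satisfies
\[
\frac{d}{dt}\bigl(\|\nabla^2\rho\|_{L^q}^q+\|\nabla^2 P\|_{L^q}^q\bigr)\le C(1+\|\nabla u\|_{L^\infty})\bigl(\|\nabla^2\rho\|_{L^q}^q+\|\nabla^2 P\|_{L^q}^q\bigr)+C\|\nabla^2 u\|_{W^{1,q}}^q,
\]
with coefficient $\|\nabla u\|_{L^\infty}$ integrable in $t$ by Lemma \ref{xle1}; Gronwall then closes provided $\int_0^T\|\nabla^2 u\|_{W^{1,q}}^{p_0}\,dt\le C$. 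Applying Lemma \ref{zhle}(1) at the $W^{1,q}$ level to $(\ref{a1})_2$ reduces this to controlling $\|\nabla(\rho\dot u)\|_{L^q}\le C(\|\dot u\|_{L^\infty}+\|\nabla\dot u\|_{L^q})$ and $\|P\|_{W^{2,q}}$ (the latter linear in the Gronwall quantity and absorbable). Splitting $\nabla\dot u=\nabla u_t+\nabla(u\cdot\nabla u)$, the hardest factor is interpolated by Lemma \ref{l1}:
\[
\|\nabla u_t\|_{L^q}\le C\|\nabla u_t\|_{L^2}^{(6-q)/(2q)}\|\nabla u_t\|_{H^1}^{(3q-6)/(2q)}.
\]
The main obstacle is the $\sigma$-weight accounting: both $\sup_t\sigma\|\nabla u_t\|_{L^2}^2$ and $\int\sigma\|\nabla u_t\|_{H^1}^2\,dt$ carry a $\sigma$-weight, so inserting their bounds produces a singular factor $\sigma^{-p_0/2}$ together with a residual power $p_0(3q-6)/(4q)$ of $\sigma\|\nabla u_t\|_{H^1}^2$. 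The specific exponent $p_0=(9q-6)/(10q-12)$ is exactly the largest value for which a H\"older estimate on $[0,T]$ keeps the singular weight integrable while the residual power stays strictly below $1$. With this interpolation in hand, Gronwall gives \eqref{uv14}, and feeding the result back through Lemma \ref{zhle} completes the remaining claim in \eqref{uv15}.
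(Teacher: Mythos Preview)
Your overall strategy is the paper's: Lam\'e regularity for $u_t$ gives $\int_0^T\sigma\|\nabla u_t\|_{H^1}^2\,dt\le C$; feeding $\|\nabla(\rho\dot u)\|_{L^2}\le C(1+\|\nabla u_t\|_{L^2})$ into the $H^1$ Lam\'e estimate yields the $H^2$ control on $\nabla u$; and the $W^{2,q}$ bound on $(\rho,P)$ is closed via a transport--Gronwall argument driven by $\|\nabla(\rho\dot u)\|_{L^q}$, interpolated through $\|\nabla u_t\|_{L^2}$ and $\|\nabla u_t\|_{H^1}$ with the $\sigma$-weights as you describe.

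There is, however, a concrete slip in the transport step. The inequality you wrote,
\[
\frac{d}{dt}\bigl(\|\nabla^2\rho\|_{L^q}^q+\|\nabla^2 P\|_{L^q}^q\bigr)\le C(1+\|\nabla u\|_{L^\infty})\bigl(\|\nabla^2\rho\|_{L^q}^q+\|\nabla^2 P\|_{L^q}^q\bigr)+C\|\nabla^2 u\|_{W^{1,q}}^q,
\]
would require $\int_0^T\|\nabla^2 u\|_{W^{1,q}}^q\,dt<\infty$ for Gronwall to close. Since $\|\nabla^2 u\|_{W^{1,q}}$ contains $\|\nabla(\rho\dot u)\|_{L^q}$, this is exactly what you cannot obtain: the interpolation you set up only delivers the $L^{p_0}$ bound with $p_0<7/6<q$. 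The $q$-th power on the source comes from an unnecessary application of Young's inequality. The natural energy inequality keeps the source at first power,
\[
\frac{d}{dt}\|\nabla^2 P\|_{L^q}\le C(1+\|\nabla u\|_{L^\infty})\|\nabla^2 P\|_{L^q}+C\bigl(1+\|\nabla u_t\|_{L^2}+\|\nabla(\rho\dot u)\|_{L^q}\bigr),
\]
and then Gronwall needs only $\int_0^T\|\nabla(\rho\dot u)\|_{L^q}\,dt$, which follows from your $L^{p_0}(0,T)$ bound by H\"older since $p_0>1$. This is precisely what the paper does (see \eqref{cxb51}--\eqref{cxb55}), and it is the level at which your remark that the $\|P\|_{W^{2,q}}$ term is ``linear in the Gronwall quantity and absorbable'' actually makes sense.

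A minor side comment: $p_0=(9q-6)/(10q-12)$ is not ``exactly the largest'' admissible exponent. The H\"older/integrability constraint in your $\sigma$-weight accounting allows any $p_0<4q/(5q-6)$, which strictly exceeds $(9q-6)/(10q-12)$ for $q<6$. The paper's choice is a convenient explicit value (carried over from the strong-solution estimate in Lemma~\ref{xle11}), not a sharp threshold.
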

 \begin{proof}
It follows from Lemma \ref{xle1},  Poincar\'{e}'s and Sobolev's inequalities that
  \be \label{uv17} \ba
  \|\nabla (\n \dot u) \|_{L^2}&\le
 \||\nabla \n |\, |  u_t|  \|_{L^2}+ \|\n \,\nabla   u_t  \|_{L^2}
 + \||\nabla \n|\,| u|\,|\nabla u| \|_{L^2}\\ &\quad
 + \|\n\,|\nabla  u|^2\|_{L^2}
 + \|  \n \,|u |\,| \nabla^2 u| \|_{L^2}\\
 &\le C+C\| \nabla   u_t  \|_{L^2},
 \ea\ee
which together with \eqref{cxb18} and Lemma \ref{xle1} yields
\be\la{uv16} \ba\|\nabla^2 u\|_{H^1} &\le C (\|\n \dot u\|_{H^1}+ \| P-P(\rho_s)\|_{H^2}+\|u\|_{L^2})\\
 &\le C+C \|\na  u_t\|_{L^2}.
 \ea\ee
And   by (\ref{uv16}),
(\ref{cxb3}), (\ref{cxb17}) and (\ref{cxb25}),
\be\la{uv18}
\sup\limits_{0\le
t\le T}\si\|\nabla  u\|_{H^2}^2+\ia \|\nabla  u\|_{H^2}^2dt \le
 C.\ee

We deduce from Lemma \ref{xle1}, \eqref{cxb18} and \eqref{cxb24} that
\be\la{uv180}\ba
\|\na^2u_t\|_{L^2}
&\le C(\|(\rho\dot{u})_t\|_{L^2}+\|P_t\|_{H^1}+\|u_t\|_{L^2}+\|\rho_t\nabla\psi\|_{L^2}) \\
&\le C(\|\n  u_{tt}+\n_t u_t+\n_t u\cdot\nabla u + \n u_t\cdot\nabla u+\n u\cdot\nabla u_t\|_{L^2})\\
&\quad +C(\|\nabla P_t\|_{L^2}+\|u_t\|_{L^2})+C\\
&\le C\left(\|\n  u_{tt}\|_{L^2}+ \|\n_t\|_{L^3}\|u_t\|_{L^6}+\|\n_t\|_{L^3}\| u\|_{L^\infty}\|\nabla u\|_{L^6}\right)+C\\
&\quad+C\left(\| u_t\|_{L^6}\|\nabla u\|_{L^3}+ \| u\|_{L^\infty}\|\nabla u_t\|_{L^2}+\|\nabla P_t\|_{L^2}+\|u_t\|_{L^2}\right)\\
&\le C\|\n^{\frac{1}{2}}  u_{tt}\|_{L^2} +C\|\nabla  u_t\|_{L^2}+C,
\ea \ee
where in the first inequality, we have applied Lemma \ref{zhle} to the system
\be\la{uv19}\begin{cases}
  \mu\Delta u_t+(\lambda+\mu)\nabla\div u_t=(\rho\dot{u})_t+\nabla P_t-\rho_t\nabla\psi \,\,\, &\text{in} \,\,\Omega,\\ u_t\cdot n=0\,\,\,\text{and} \,\,\,\curl u_t\times n=0\,\,&\text{on} \,\,\partial\Omega .
\end{cases}\ee
By \eqref{uv180} and \eqref{cxb25}, we get
\be\la{cxb50}\ba
\int_0^T\sigma\|\nabla u_t\|_{H^1}^2dt\leq C.
\ea \ee
By Sobolev's inequality, \eqref{tb90}, \eqref{cxb3}, \eqref{cxb18} and \eqref{cxb25}, we check that for any $q\in (3,6)$,
\be\la{cxb53}\ba     \|\na(\n\dot u)\|_{L^q}
&\le C \|\na \n\|_{L^q}(\|\nabla\dot{u}\|_{L^q}+\|\nabla\dot{u}\|_{L^2}+\|\nabla u\|_{L^2}^2)+C\|\na\dot u \|_{L^q}\\
&\le C (\|\nabla\dot{u}\|_{L^2}+\|\nabla u\|_{L^2}^2)+C(\|\na u_t \|_{L^q}+\|\na(u\cdot \na u ) \|_{L^q})\\
&\le C (\|\nabla u_t\|_{L^2}+1)+C\|\na u_t \|_{L^2}^{\frac{6-q}{2q}}\|\nabla u_t\|_{L^6}^{\frac{3(q-2)}{2q}}\\
&\quad+C(\|u \|_{L^\infty}\|\nabla^{2}u\|_{L^q}+\|\nabla u\|_{L^{\infty}}\|\nabla u\|_{L^q})\\
&\le C\sigma^{-\frac{1}{2}}+C\|\nabla u\|_{H^2}+C\sigma^{-\frac{1}{2}}(\sigma\|\nabla u_t\|_{H^1}^2)^{\frac{3(q-2)}{4q}}+C,
\ea \ee
which along with \eqref{cxb2} and \eqref{cxb50}, leads to
\be\ba\la{cxb55}
\int_0^T\|\nabla(\rho\dot{u})\|_{L^q}^{p_0}dt\leq C .
\ea\ee

On the other hand, notice that, by \eqref{remark1}, \eqref{remark2}, \eqref{cxb2} and \eqref{cxb18},
\be \la{cxb52}\ba \|\na^2 u\|_{W^{1,q}}
 &\le C(\|\rho\dot{u}\|_{L^q}+\|\nabla(\rho\dot{u})\|_{L^q}+\|\nabla^{2} P\|_{L^q}+\|\nabla P\|_{L^q}\\
&\quad+\|\nabla u\|_{L^2}+\|P-P(\rho_s)\|_{L^2}+\|P-P(\rho_s)\|_{L^q}+C\|(\rho-\rho_s)\nabla\psi\|_{W^{1,q}})\\
 &\le C(1 + \|\na  u_t\|_{L^2}+ \| \na(\n\dot u )\|_{L^{q}}+\|\na^2  P\|_{L^{q}}),
 \ea\ee
which together with \eqref{Pu2} and \eqref{cxb18} yields
\be\la{cxb51}\ba
(\|\na^2 P\|_{L^q})_t\le& C \|\na u\|_{L^\infty} \|\na^2 P\|_{L^q}   +C  \|\na^2 u\|_{W^{1,q}}   \\\le& C (1+\|\na u\|_{L^\infty} )\|\na^2 P\|_{L^q}+C(1+ \|\na  u_t\|_{L^2})\\&+ C\| \na(\n
\dot u )\|_{L^{q}}.
\ea\ee

 Now by Gronwall's inequality, \eqref{cxb3}, \eqref{cxb17}  and \eqref{cxb55}, we derive that
\be\ba\la{cxb56}
\sup_{t\in[0,T]}\|\nabla^{2}P\|_{L^q}\leq C ,
\ea\ee
which along with \eqref{cxb17}, \eqref{cxb18}, \eqref{cxb52} and \eqref{cxb55} also gives
\be\ba\la{cxb57}
\sup_{t\in[0,T]}\|P-P(\rho_s)  \|_{W^{2,q}}+\int_0^T\|\nabla^{2}u\|_{W^{1,q}}^{p_0}dt\leq C .
\ea\ee
Similarly,
\bnn\sup\limits_{0\le t\le T}\|
\n-\rho_s\|_{W^{2,q}} \le
 C,\enn
As a result, we obtain (\ref{uv14}) and finish the proof of Lemma \ref{xle4}.
\end{proof}
\begin{lemma}\la{xle5} There exists a positive constant $C$ such that
\be \la{cxb58}
\sup_{0\le t\le T}\si\left(\|\na u_t\|_{H^1}
 +\|\na u\|_{W^{2,q}}\right)
 +\int_{0}^T\si^2\|\nabla u_{tt}\|_{2}^2dt\le C ,
 \ee
 for   $q\in (3,6)$.
\end{lemma}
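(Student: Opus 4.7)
My plan has three stages, building on the energy-type arguments developed in Lemmas \ref{xle1}--\ref{xle4}. The crucial new ingredient is the auxiliary bound
\[
\sup_{0\le t\le T}\si^2\|\rho^{1/2}u_{tt}\|_{L^2}^2 + \int_0^T\si^2\|\nabla u_{tt}\|_{L^2}^2\,dt \le C,
\]
which is obtained in the same spirit as \eqref{cxb34} but at one higher level of time differentiation. I would differentiate $(\ref{a1})_2$ twice in time to obtain
\[
\rho u_{ttt} + \rho_t u_{tt} - \mu\Delta u_{tt} - (\mu+\lambda)\nabla\div u_{tt} = -(\rho_t u_t)_t - (\rho u\cdot\nabla u)_{tt} - \nabla P_{tt} + \rho_{tt}\nabla\psi,
\]
and test against $\si^2 u_{tt}$. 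Because the slip conditions are time-independent, $u_{tt}\cdot n = 0$ and $\curl u_{tt}\times n = 0$ on $\partial\Omega$, so integration by parts of the viscous terms (splitting via $-\Delta=-\nabla\div+\nabla\times\curl$) produces the coercive quantity $\si^2[(2\mu+\lambda)\|\div u_{tt}\|_{L^2}^2+\mu\|\curl u_{tt}\|_{L^2}^2]$, which by Lemma \ref{crle1} dominates $c\si^2\|\nabla u_{tt}\|_{L^2}^2$. The time-derivative term rearranges as $\tfrac12\tfrac{d}{dt}(\si^2\|\rho^{1/2}u_{tt}\|_{L^2}^2)$ up to controllable remainders, and the nonlinear right-hand side is bounded using the $H^2$-bound on $u$ from \eqref{cxb3}, the weighted bound $\sqrt{\si}\|\nabla u_t\|_{L^2}\le C$ from \eqref{cxb25}, and the estimates on $\rho_t,\rho_{tt},P_{tt}$ from \eqref{cxb24}. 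A Gronwall argument with vanishing initial data (since $\si(0)=0$) then closes the estimate.

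With this bound in hand, the estimate $\si\|\nabla u_t\|_{H^1}\le C$ follows quickly: applying Lemma \ref{zhle} to the elliptic system \eqref{uv19} gives $\|\nabla^2 u_t\|_{L^2}\le C\|\rho^{1/2}u_{tt}\|_{L^2}+C\|\nabla u_t\|_{L^2}+C$ as in \eqref{uv180}, and multiplying by $\si$ and invoking \eqref{cxb25} finishes it. For $\si\|\nabla u\|_{W^{2,q}}\le C$, I would use Lemma \ref{zhle} applied to the momentum equation, yielding an inequality of the type \eqref{cxb52}. The critical quantity $\|\nabla(\rho\dot u)\|_{L^q}$ is controlled via \eqref{cxb53}; multiplying by $\si$ and combining $\sqrt{\si}\|\nabla u\|_{H^2}\le C$ from \eqref{uv15}, $\|\nabla^2 P\|_{L^q}\le C$ from \eqref{cxb56}, and the preceding bound $\si\|\nabla u_t\|_{H^1}\le C$, a short interpolation argument shows that the emerging negative power of $\si$ is beaten by $\si^{(6-q)/4q}\le 1$, which is non-negative precisely because $q<6$.

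The main obstacle lies in the first stage. The right-hand side of the twice-differentiated equation yields terms such as $\rho u_{tt}\cdot\nabla u\cdot u_{tt}$, $\rho u_t\cdot\nabla u_t\cdot u_{tt}$, $\rho_{tt}u_t\cdot u_{tt}$, $\nabla P_{tt}\cdot u_{tt}$, and $\rho_{tt}\nabla\psi\cdot u_{tt}$, each sitting at the threshold of the regularity available from Lemmas \ref{xle1}--\ref{xle4}. The convective piece $\rho u\cdot\nabla u_{tt}\cdot u_{tt}$ coming from $(\rho u\cdot\nabla u)_{tt}$ must be integrated by parts and combined with the continuity equation so as to trade it for the controllable form $\tfrac12\int\rho_t|u_{tt}|^2\,dx$, which is itself absorbed by the dissipation after another integration by parts. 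The pressure piece $\nabla P_{tt}\cdot u_{tt}$ is shifted onto $\div u_{tt}$ and absorbed into the coercive quantity, with $\|P_{tt}\|_{L^2}$ handled by \eqref{cxb24}. The weight $\si^2$ is indispensable: the compatibility condition \eqref{dt3} only provides $\rho^{1/2}u_t(\cdot,0)\in L^2$, so the initial value of $\si\|\rho^{1/2}u_{tt}\|_{L^2}$ cannot be controlled from the data, and an extra factor of $\si$ is required both to kill the transient singularity at $t=0$ and to keep the Gronwall constants finite against nonlinear terms whose time-integrability from the previous lemmas is sharp.
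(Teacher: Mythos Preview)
Your proposal is correct and follows essentially the same approach as the paper: differentiate the momentum equation twice in time, test against $u_{tt}$ with a $\sigma$-weight, use the slip boundary conditions and Lemma \ref{crle1} for coercivity, control the nonlinear commutators via Lemmas \ref{xle1}--\ref{xle4} (in particular \eqref{cxb24}--\eqref{cxb25}), and close by Gronwall to obtain the $u_{tt}$ bound; then read off $\sigma\|\nabla u_t\|_{H^1}$ from \eqref{uv180} and $\sigma\|\nabla u\|_{W^{2,q}}$ from \eqref{cxb52}--\eqref{cxb53} together with \eqref{cxb56}. The paper in fact obtains the slightly sharper $\sup_t \sigma\|\rho^{1/2}u_{tt}\|_{L^2}^2\le C$ in \eqref{cxb67}, but your weaker $\sigma^2$-weighted version is entirely sufficient for the stated conclusion.
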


\begin{proof} Differentiating $(\ref{a1})_2$ with respect to $t$ twice leads to
 \be\la{cxb59}\ba
&\n u_{ttt}+\n u\cdot\na u_{tt}-(\lambda+2\mu)\nabla{\rm div}u_{tt}+\mu\nabla\times\curl u_{tt}\\
&= 2{\rm div}(\n u)u_{tt}
+{\rm div}(\n u)_{t}u_t-2(\n u)_t\cdot\na u_t-(\n_{tt} u+2\n_t u_t)
\cdot\na u\\& \quad- \n u_{tt}\cdot\na u-\na P_{tt}+\rho_{tt} \nabla\psi.
 \ea\ee

Then, multiplying by $2u_{tt}$ and   integrating over $\Omega$, we have
\be \la{cxb60}\ba
&\frac{d}{dt}\int_{ }\n
|u_{tt}|^2dx+2(\lambda+2\mu)\int_{ }(\div u_{tt})^2dx+2\mu\int_{ }|\curl u_{tt}|^2dx \\
&=-8\int_{ }  \n u^i_{tt} u\cdot\na
 u^i_{tt} dx-2\int_{ }(\n u)_t\cdot \left[\na (u_t\cdot u_{tt})+2\na
u_t\cdot u_{tt}\right]dx\\&\quad -2\int_{
}(\n_{tt}u+2\n_tu_t)\cdot\na u\cdot u_{tt}dx-2\int_{ }   \n
u_{tt}\cdot\na u\cdot  u_{tt} dx\\&\quad+2\int_{ } P_{tt}{\rm
div}u_{tt}dx+2\int \rho_{tt}\nabla\psi\cdot u_{tt}dx\triangleq\sum_{i=1}^6J_i.
\ea\ee
Due to  \eqref{cxb3}, \eqref{cxb2}, \eqref{cxb17}, \eqref{cxb24} and \eqref{cxb25}, we have
\be \la{cxb61} \ba |J_1|&\le
C\|\n^{1/2}u_{tt}\|_{L^2}\|\na u_{tt}\|_{L^2}\| u \|_{L^\infty}\\
&\le \de \|\na u_{tt}\|_{L^2}^2+C(\de)\|\n^{1/2}u_{tt}\|^2_{L^2} .
\ea\ee
\be \la{cxb62}\ba
|J_2|&\le C\left(\|\n
u_t\|_{L^3}+\|\n_t u\|_{L^3}\right)\left(\| u_{tt}\|_{L^6}\| \na
u_t\|_{L^2}+\| \na u_{tt}\|_{L^2}\| u_t\|_{L^6}\right)\\&\le
C\left(\|\n^{1/2} u_t\|^{1/2}_{L^2}\|u_t\|^{1/2}_{L^6}+\|\n_t
\|_{L^6}\| u\|_{L^6}\right)  \| \na u_{tt}\|_{L^2}  \| \na u_{t}\|_{L^2} \\ &\le \de
\|\na u_{tt}\|_{L^2}^2+C(\de)\si^{-3/2},\ea\ee

\be  \la{cxb63}\ba |J_3|+|J_6|&\le C\left(\|\n_{tt}\|_{L^2}
\|u\|_{L^\infty}\|\na u\|_{L^3}+\|\n_{
t}\|_{L^6}\|u_{t}\|_{L^6}\|\na u \|_{L^2}+\|\n_{tt}\|_{L^2}\right)\|u_{tt}\|_{L^6} \\
&\le \de \|\na u_{tt}\|_{L^2}^2+C(\de)\|\n_{tt}\|_{L^2}^2+C(\de)\si^{-1},
\ea\ee
and
\be  \la{cxb64}\ba
|J_4|+|J_5|&\le C\|\n u_{tt}\|_{L^2} \|\na
u\|_{L^3}\|u_{tt}\|_{L^6} +C \|P_{tt}\|_{L^2}\|\na
u_{tt}\|_{L^2}\\
&\le \de \|\na u_{tt}\|_{L^2}^2+C(\de)\|\n^{1/2}u_{tt}\|^2_{L^2}
+C(\de)\|P_{tt}\|^2_{L^2}.
\ea\ee
Now choosing $\de$ small enough, it follows from \eqref{cxb60} that
\be  \la{cxb66}\ba
&\frac{d}{dt}\|\n^{1/2}u_{tt}\|^2_{L^2}+\|\na u_{tt}\|_{L^2}^2\\
&\le C (\|\n^{1/2}u_{tt}\|^2_{L^2}+\|\n_{tt}\|^2_{L^2}+\|P_{tt}\|^2_{L^2})+C \si^{-3/2},
 \ea\ee
where we have utlized the fact that
\be  \la{cxb65}\ba
\|\nabla u_{tt}\|_{L^2}\leq C(\|\div u_{tt}\|_{L^2}+\|\curl u_{tt}\|_{L^2}) ,
\ea\ee
due to $u_{tt}\cdot n=0$ on $\partial\Omega.$

Together with  (\ref{cxb24}), (\ref{cxb25}), and by Gronwall's inequality, we get
\be  \la{cxb67}\ba
\sup_{0\le t\le T}\si\|\n^{1/2}u_{tt}\|_{L^2}^2+\int_{0}^T\si^2\|\nabla u_{tt}\|_{L^2}^2dt\le C.
\ea\ee
Furthermore, by \eqref{uv180} and \eqref{cxb25},
\be  \la{cxb68}\ba
\sup_{0\le t\le T}\si\|\nabla u_t\|_{H^1}^2\le C.
\ea\ee

Finally, by\eqref{cxb52}, \eqref{cxb53}, \eqref{cxb25}, \eqref{uv14}, \eqref{uv15}, \eqref{cxb67} and \eqref{cxb68}, we have
\bnn  \la{cxb69}\ba
\si\|\na^2 u\|_{W^{1,q}}
& \le C\left(\si +\si\|\na  u_t\|_{L^2}+\si\| \na(\n
\dot u )\|_{L^{q}}+\si\|\na^2  P\|_{L^{q}}\right)\\
& \le C\left(\sigma+\sigma^{\frac{1}{2}} +  \si\|\na u\|_{H^2}+\sigma^{\frac{1}{2}}(\sigma\|\na u_t\|_{H^1}^2)^{\frac{3(q-2)}{4q}}\right)\\
&\le C\sigma^{\frac{1}{2}}+C\sigma^{\frac{1}{2}}(\sigma^{-1})^{\frac{3(q-2)}{4q}}\\
&\le C ,
\ea\enn
which, together with (\ref{cxb67}) and (\ref{cxb68}) yields (\ref{cxb58}).
\end{proof}

\section{\la{se5}Proof of the Main Theorems}

With all the a priori estimates in Section \ref{se3} and Section \ref{se4} at hand, we are going to  prove Theorems \ref{th3}-\ref{th1}  in this section.

{\it \bf{Proof of Theorem \ref{th3}.}}
By Lemma \ref{loc1}, there exists a
$T_*>0$ such that the  system (\ref{a1})-(\ref{ch1}) has a unique strong solution $(\rho,u)$ on $\Omega\times
(0,T_*]$. In order to extend the local strong solution globally in time, first, by the definition of $A_1(T)$, $A_2(T)$ and $A_3(T)$ (see \eqref{As1}, \eqref{As2}, \eqref{As3}), the assumption of the initial data \eqref{dt2} , one immediately checks that
$$ A_1(0)+A_2(0)=0, \,\, 0\leq\rho_0\leq \bar{\rho},\,\, A_3(0)\leq M.$$
Therefore, there exists a
$T_1\in(0,T_*]$ such that
\be\la{dlbh1}\ba
0\leq\rho_0\leq2\hat{\rho},\,\,A_1(T)+A_2(T)\leq 2C_0^{\frac{1}{2}}, \,\, A_3(\sigma(T))\leq 2K
\ea\ee
hold for $T=T_1.$

Next, we set
\bn \la{dlbh2}
T^*=\sup\{T\,|\,{\rm (\ref{dlbh1}) \ holds}\}.
\en
Then $T^*\geq T_1>0$. Therefore, for any $0<\tau<T\leq T^*$ with $T$ finite, by Lemmas \ref{xle11}-\ref{xle111}, we have
\be\nonumber\begin{cases}
 (\rho ,P )\in L^\infty([0,T];W^{1,q} ),\,\,\, \rho_t\in L^\infty(0,T;L^2),\\ u\in L^\infty(0,T;H^1 )\cap  L^2(0,T; H^2),\\
\rho^{1/2}u_t\in L^2 (0,T; L^2),\,\,\, (\rho^{1/2}u_t,\nabla^2 u)\in L^\infty(\tau,T; L^2)\\   u_t\in L^2(\tau,T;H^1),
\end{cases}\ee
which immediately implies for any $r\geq 2$,
\bnn
(\rho ,P )\in C([0,T];L^r),\,\,\, \rho u\in C([0,T];L^2),
\enn
where we have used the standard embedding
$$L^\infty(0 ,T;H^1)\cap H^1(0 ,T;H^{-1})\hookrightarrow
C\left([0 ,T];L^q\right),\quad\mbox{ for any } q\in [2,6).$$
By $\eqref{a1}_1$, for any $T>t>s\geq 0$,
$$\|\nabla\rho(t)\|_{L^q}\leq \left(\|\nabla\rho(s)\|_{L^q}+C\int_s^t\|\nabla^2 u(\varsigma)\|_{L^q}d\varsigma\right)\exp({C\int_s^t\|\nabla^2 u(\varsigma)\|_{L^\infty}d\varsigma}),$$
which leads to
\be\la{narhoc1}\ba
\limsup_{t\rightarrow s^+}{\|\nabla\rho(t)\|_{L^q}}\leq \|\nabla\rho(s)\|_{L^q}.
\ea\ee
On the other hand, by \eqref{1cxb3}, $\nabla\rho\in C([0,T]; L^q-\text{weak})$, together with \eqref{narhoc1}, we conclude that for any $q\in [2,6)$,
\bnn
\nabla\rho \in C([0,T];L^q).
\enn
Finally, we claim that \bnn T^*=\infty.\enn Otherwise,
$T^*<\infty$. Then by Proposition \ref{pr1}, it holds that
\be\la{dlbh6}\ba
0\leq\rho\leq\frac{7}{4}\hat{\rho} ,\,\,\,A_1(T^*)+A_2(T^*)\leq C_0^{\frac{1}{2}},\,\,\, A_3(\sigma(T^*))\leq K.
\ea\ee
and $(\n(x,T^*),u(x,T^*))$ satisfy the initial data condition (\ref{dt1})-(\ref{dt5}). Thus, Lemma
\ref{loc1} implies that there exists some $T^{**}>T^*$ such that
(\ref{dlbh1}) holds for $T=T^{**}$, which contradicts the definition of $ T^*.$
As a result, $0<T_1<T^*=\infty$.

By \eqref{loc1} and \eqref{1dt6}, it indicates that $(\rho,u)$ is really the unique strong solution defined on $\Omega\times(0,T]$ for any  $0<T<T^*=\infty.$

{\it \bf{Proof of Theorem \ref{th1}.}}
By Theorem \ref{th3}, we only need to prove that the unique strong solution is a classical one under the assumption of Theorem \ref{th1}. for any $0<\tau<T\leq T^*$
with $T$ finite, it follows from Lemmas \ref{xle3}-\ref{xle5}
that
 \be \la{dlbh3}\begin{cases}
   \rho-\rho_s \in C([0,T]; W^{2,q}), \\ \na u_t \in C([\tau ,T]; L^q),\quad
 \na u,\na^2u \in C\left([\tau ,T];
 C (\bar{\Omega})\right),\end{cases}\ee
 where one has taken advantage of  the standard
embedding
$$L^\infty(\tau ,T;H^1)\cap H^1(\tau ,T;H^{-1})\hookrightarrow
C\left([\tau ,T];L^q\right),\quad\mbox{ for any } q\in [2,6).  $$
% Due to (\ref{cxb17}), (\ref{cxb25}), (\ref{cxb58}) and $(\ref{a1})_1$, we obtain \be\ba &\int_{\tau}^T \left|\left(\int\n|u_t|^2dx\right)_t\right|dt\no &\le\int_{\tau}^T\left(\|  \n_t  |u_t|^2 \|_{L^1}+2\|  \n  u_t\cdot u_{tt} \|_{L^1}\right)dt\\ &\le C\int_{\tau}^T \left( \| \n|\div u||u_t|^2 \|_{L^2}+\|  |u||\na \n| |u_t|^2 \|_{L^1}+ \|\rho^{\frac{1}{2}}  u_t \|_{L^2}\|\rho^{\frac{1}{2}}u_{tt} \|_{L^2}\right)dt\\ &\le C\int_{\tau}^T\left( \| \n^{\frac{1}{2}} |u_t|^2 \|_{L^2}\|\na u\|_{L^\infty}+\|  u\|_{L^6}\|\na\n\|_{L^2} \|u_t  \|^2_{L^6}+  \|\rho^{\frac{1}{2}}u_{tt} \|_{L^2}\right)dt\\ &\le C,\ea\ee which together with \eqref{dlbh3} yields \be\la{dlbh4} \n^{1/2}u_t, \quad\n^{1/2}\dot u \in C([\tau,T];L^2).\ee
By Lemmas \ref{loc1} and \ref{xle3}-\ref{xle5}, $(\rho,u)$ is in fact the unique classical solution defined on $\Omega\times(0,T]$ for any  $0<T<\infty.$

It remains to prove \eqref{qa1w}.
%%%%%%%%%%%%%%%%%%%%%%%%%%%%%%%%%%%%%%%%%%%%%%%%%%%%%%%%%%%%%%%%
%The exponential decay is given as follows.
%%%%%%%%%%%%%%%%%%%%%%%%%%%%%%%%%%%%%%%%%%%%%%%%%%%%%%%%%%%%%%%%
 By \eqref{m8} and \eqref{m0}, we get
\be \la{eeq1} \ba
\left(\int \frac{1}{2}\rho |u|^2+G(\rho,\rho_s)dx\right)_t+\phi(t)=0,
\ea\ee
 where  $$\phi(t)\triangleq(\lambda+2\mu)\|\div u \|_{L^{2}}^{2}+\mu\|\curl u\|_{L^{2}}^{2}.$$
Notice that there exists a positive constant $\ti C_1<1$ depending only on $\gamma$, $\underline{\rho}$ and $\bar{\rho}$ such that for any $\rho\geq 0$,
\bnn\label{gine1}  \ti C_1^2( \rho-\rho_s)^2\le \ti C_1  G(\rho,\rho_s)  \leq    (\rho^\gamma-\rho_s^\gamma)( \rho - \rho_s), \enn
and by \eqref{e4} and \eqref{tdu1},
\be \la{gine2} \ba
a\ti C_1\int G(\rho,\rho_s)dx&\leq a\int(\rho^\gamma-\rho_s^\gamma)( \rho -\rho_s)dx\\
&\leq 2\left(\int\rho u\cdot\mathcal{B}[\rho-\rho_s] dx\right)_t+C_7\phi(t).
\ea\ee
 Introducing the function
 $$W(t)=\int \left(\frac{1}{2}\rho |u|^2+G(\rho,\rho_s)\right)dx-\delta_0\int\rho u\cdot\mathcal{B}[\rho-\rho_s] dx,$$
where $\delta_0=\min\{\frac{1}{2C_7},\frac{1}{2C_8}\}$, and
noticing that
\bnn \la{c511} \ba
\left|\int\rho u\cdot\mathcal{B}[\rho-\rho_s] dx\right|\leq C_8\left(\frac{1}{2}\|\sqrt{\rho} u\|^2_{L^2}+\int G(\rho,\rho_s)dx\right),
\ea\enn we have
\be \la{c512} \ba
 \frac{1}{2}\|\sqrt{\rho} u\|^2_{L^2}+\int G(\rho,\rho_s)dx \leq 2W(t)\leq 4\left(\frac{1}{2}\|\sqrt{\rho} u\|^2_{L^2}+\int G(\rho,\rho_s)dx\right).\ea\ee
Noticing that \bnn \int\n |u|^2dx\leq C\|\na u\|_{L^2}^2\leq C_3\phi(t),\enn setting $\delta_1=\min\{\frac{a\delta_0 \ti C_1}{2},\frac{1}{2C_3}\}$ and adding \eqref{gine2} multiplied by $\de_0 $ to \eqref{eeq1} yields
 \bnn W'(t)+\delta_1W(t)\leq 0,\enn which together with \eqref{c512} leads to
\begin{equation}\label{c513}
\int\left(\frac{1}{2}\rho|u|^2+G(\rho,\rho_s)\right)dx\leq 4C_0e^{-\delta_1t}
\end{equation}
for any $t>0$. Moveover, by \eqref{eeq1}, for any $0<\delta_2< \delta_1$,
\be \la{c514} \ba
\int_0^\infty\phi(t)e^{\delta_2 t} dt\leq C.
\ea\ee

Choose $m=0$ in \eqref{I4}, along with \eqref{tdu1}, \eqref{h18} and \eqref{uv1}, a direct calculation shows that
\be\la{c515}\ba
&\left(\phi(t)-2\int(P-P(\rho_s))\,\div udx\right)_{t}+\frac{1}{2}\|\sqrt{\rho}\dot{u}\|^2_{L^2}\leq C(\|\n-\rho_s\|_{L^2}^2+\phi(t) ,
\ea \ee
Multiplying \eqref{c515} by $e^{\delta_2 t}$, and using the fact
\bnn\ba
\left|\int(P-P(\rho_s))\,\div udx\right|\leq C\|\n-\rho_s\|_{L^2}^2+\frac{1}{2}\phi(t),
\ea\enn
we get
\bnn\la{c516}\ba
&\left(e^{\delta_2 t}\phi(t)-2e^{\delta_2 t}\int(P-P(\rho_s))\,\div udx\right)_{t}+\frac{1}{2}e^{\delta_2 t}\|\sqrt{\rho}\dot{u}\|^2_{L^2}\\
&\leq Ce^{\delta_2 t}(\|\n-\rho_s\|_{L^2}^2+\phi(t)),
\ea \enn
which, together with \eqref{c513} and \eqref{c514}, yields that for any $t>0$,
\bnn\la{c517}\ba
\|\nabla u\|_{L^2}^2\leq Ce^{-\delta_2 t},
\ea \enn
and
\be\la{c518}\ba
\int_0^\infty e^{\delta_2 t}\|\sqrt{\rho}\dot{u}\|^2_{L^2}dt\leq C.
\ea \ee
A similar analysis based on \eqref{ax401} and \eqref{c518} shows
\bnn\la{c519}\ba
\|\sqrt{\rho}\dot{u}\|^2_{L^2}\leq Ce^{-\delta_2 t}.
\ea \enn
As a result, \eqref{qa1w} is established with some $\tilde{C}$ depending only on on $\mu,$  $\lambda,$  $\gamma,$ $a$,   $\inf\limits_{\overline{\Omega}}\psi$, $\|\psi\|_{H^2}$, $\hat{\rho}$, $M$, $\Omega$, $p$, $q$ and $C_0$ and we complete the proof.


\begin{thebibliography}{99}
%\bibitem{Apv1}
%Achdou, Y., Pironneau, O., Valentin, F. Effective boundary conditions for laminar flows over
%periodic rough boundaries. J. Comput. Phys. \textbf{147}(1), 187-218 (1998)
\bibitem{adn}
Agmon, S., Douglis, A., Nirenberg, L. Estimates near the boundary for solutions of elliptic partial differential equations satisfying general boundary conditions II. Commun. Pure Appl. Math. \textbf{17} (1964), 35-92.
\bibitem{CANEHS}
Aramaki, J. $L^p$ theory for the div-curl system. Int. J. Math. Anal. \textbf{8}(2014), 259-271.
%\bibitem{BE1}
%B\"{a}nsch, E.  Finite element discretization of the Navier-Stokes equations with a free capillary surface. Numer. Math. \textbf{88}(2), 203-235 (2001)
\bibitem{bkm}
Beale, J. T., Kato, T., Majda. A. J. Remarks on the breakdown of smooth solutions for the 3-D Euler equations.  Comm. Math. Phys. \textbf{94}(1), 61-66 (1984)
%\bibitem{Bgs1}
%Beavers, G. S., Joseph, D. D. Boundary conditions at a naturally permeable wall. J. Fluid Mech. \textbf{30}(01), 197-207 (1967)
%\bibitem{hbdv1}
%Beir\~{a}o da Veiga, H. Stationary motions and the incompressible limit for compressible viscous fluids. Houston J. Math. \textbf{13}, 527-544 (1987)
%\bibitem{hbdv2}
%Beir\~{a}o da Veiga, H. An $L^p$-theory for the n-dimensional, stationary, compressible Navier-Stokes equations, and the incompressible limit for compressible fluids. The equilibrium solutions. Comm. Math. Phys. \textbf{109}, 229-248 (1987)
%\bibitem{hbdv3}
%Beir\~{a}o da Veiga, H. On the regularity of flows with Ladyzhenskaya shear-dependent viscosity and slip or non-slip boundary conditions. Comm. Pure Appl. Math. \textbf{58}(4), 552-577 (2005)
%\bibitem{bl}
%Bergh, J., Lofstrom, J. \textit{Interpolation spaces. An introduction}(Springer, Berlin/Heidelberg/New York, 1976)
\bibitem{CL1}
Cai,G. C., Li,J. Existence and exponential growth of  global classical solutions to the  compressible Navier-Stokes equations with slip boundary conditions in 3D bounded domains. arXiv:2102.06348.
\bibitem{K1}
Cho, Y., Choe, H. J., Kim, H. Unique solvability of the initial boundary value problems for compressible viscous fluids. J. Math. Pures Appl. \textbf{83}(2), 243-275 (2004)
\bibitem{K3}
Cho, Y., Kim, H. On classical solutions of the compressible Navier-Stokes equations with nonnegative initial densities. Manuscripta Mathematica.  \textbf{120} (2006), 91--129.
\bibitem{K2}
Choe, H. J., Kim, H. Strong solutions of the Navier-Stokes equations for isentropic compressible fluids. J. Differ. Eqs. \textbf{190}(2003), 504--523.
%\bibitem{Cde1}
%Cioranescu, D., Donato, P., Ene, H. I. Homogenization of the Stokes problem with nonhomogeneous slip boundary conditions. Math. Methods Appl. Sci. \textbf{19}(11), 857-881 (1996)
%\bibitem{Cmr1}
%Clopeau, T., Mikeli\'{c}, A., Robert, R. On the vanishing viscosity limit for the 2D incompressible Navier-Stokes equations with the friction type boundary conditions. Nonlinearity. \textbf{11}(6), 1625-1636 (1998)
\bibitem{Cpfc1}
Constantin, P., Foias, C. \textit{Navier-Stokes Equations}(Chicago Lectures in Mathematics. University of Chicago Press, Chicago, 1988)
%\bibitem{ELC}
%Evans, L. C. \textit{Partial Differential Equations}(American Mathmatical Society, Providence, Rhode Island, 2010)
%\bibitem{F2}
%Feireisl, E. \textit{Dynamics of viscous compressible fluids}(Oxford University Press, New York, 2004)
%\bibitem{FA1}
%Feireisl, E., Novotn\'y, A. \textit{Singular Limits in Thermodynamics of Viscous Fluids}(Birkh$\ddot{a}$user, Basel, 2009)
%\bibitem{fe1} Feireisl, E.,  Novotny, A.,  Petzeltov\'{a}, H. On the existence of globally defined weak solutions to the Navier-Stokes equations. J. Math. Fluid Mech. \textbf{3}(4), 358-392 (2001)
\bibitem{fe2} Feireisl, E.,  Petzeltov\'{a}, H. Large-time behaviour of solutions to the Navier-Stokes equations of compressible flow, Arch. Ration. Mech. Anal., {\bf 150} (1999),  77--96.
\bibitem{GPG}
Galdi, G. P. \textit{An Introduction to the Mathematical Theory of the Navier-Stokes Equations. Steady-State Problems. Second Edition}(Springer, New York, 2011)
\bibitem{hlx01}
 Huang, F. M.; Li, J.; Xin, Z. P.  Convergence to equilibria and blowup behavior of global strong solutions to the Stokes approximation equations for two-dimensional compressible flows with large data. J. Math. Pures Appl. (9), {\bf 86} (2006),   471--491.

%\bibitem{ygan}
%Giga, Y., Novotn\'{y}, A. \textit{Handbook of Mathematical Analysis in Mechanics of Viscous Fluids}(Springer, 2018)
%\bibitem{H3}
%Hoff, D. Global solutions of the Navier-Stokes equations for multidimensional compressible flow with discontinuous initial data.
%\textit{J. Differ. Eqs.} \textbf{120}(1), 215-254 (1995)
%\bibitem{Hof2}
%Hoff, D. Strong convergence to global solutions for multidimensional flows of compressible, viscous fluids with polytropic equations of state and
%discontinuous initial data. Arch. Rational Mech. Anal. \textbf{132}(1), 1-14 (1995)
%\bibitem{Ho3}
%Hoff, D. Compressible flow in a half-space with Navier boundary conditions. J. Math. Fluid Mech. \textbf{7}(3), 315-338 (2005)
%\bibitem{HS1}
%Hoff, D., Santos, M. M.  Lagrangean structure and propagation of singularities in multidimensional compressible flow. Arch. Rational Mech. Anal. \textbf{188}(3), 509-543 (2008)
%\bibitem{HT1}
%Hoff, D., Tsyganov, E.  Time analyticity and backward uniqueness of weak solutions of the Navier-Stokes equations of multidimensional compressible flow. J. Differ. Equ. 245(10), 3068-3094 (2008)
\bi{hxd1}
Huang, X. D. On local strong and classical solutions to the three-dimensional barotropic compressible Navier-Stokes equations with vacuum. Sci China Math. \textbf{63}, (2020) https://doi.org/10.1007/s11425-019-9755-3

\bibitem{hlx}
Huang, X. D., Li, J., Xin Z. P. Serrin type criterion for the three-dimensional compressible flows. SIAM J. Math. Anal. \textbf{43} (2011), 1872--1886.
\bibitem{hlx1}
Huang, X. D., Li, J., Xin, Z. P. Global well-posedness of classical solutions with large oscillations and vacuum to the three-dimensional isentropic compressible Navier-Stokes equations. Comm. Pure Appl. Math. \textbf{65} (2012), 549--585.
%\bibitem{nit1}
%Itaya, N. On the initial value problem of the motion of compressible viscous fluid, especially on the problem of uniqueness. J. Math. Kyoto Univ. \textbf{16}, 413-427 (1976)
\bi{Itt2}
Itoh, S., Tanaka, N., Tani, A. The initial value problem for the Navier-Stokes equations with general slip boundary condition in H\"{o}lder spaces. J. Math. Fluid Mech. \textbf{5} (2003), 275-301.
%\bibitem{Jwm1}
%J\"{a}ger, W., Mikeli\'{c}, A. On the roughness-induced effective boundary conditions for an incompressible viscous flow. J. Differ. Equ. \textbf{170}(1), 96-122 (2001)
\bibitem{kato}
Kato, T. Remarks on the Euler and Navier-Stokes equations in $\r^2$. Proc. Symp. Pure Math. Amer. Math. Soc., Providence. \textbf{45} (1986), 1-7.
%\bibitem{Kaz2}
%Kazhikhov A. V., Solonnikov V. A.  Existence theorems for the equations of motion of a compressible viscous fluid. Ann. Rev. Fluid Mechanics. \textbf{13}(1), 79-95 (1981)
\bibitem{lm1}
Li, J., Matsumura, A. On the Navier-Stokes equations for three-dimensional compressible
barotropic flow subject to large external potential forces with discontinuous initial data, J.
Math. Pures Appl. (9), {\bf 95} (2011),   495--512.
\bibitem{lx}
Li, J., Xin, Z.  Some uniform estimates and blowup behavior of global strong solutions to the Stokes approximation equations for two-dimensional compressible flows. J. Differ. Eqs.  \textbf{221} (2006), 275-308.
\bibitem{jx01}
Li, J., Xin, Z.  Global Existence of Regular Solutions with Large Oscillations and Vacuum. Handbook of Mathematical Analysis in Mechanics of Viscous Fluids. Springer, 2016
\bibitem{LZZ1}
Li, J., Zhang, J. W., Zhao, J. N.  On the global motion of viscous compressible
barotropic flows subject to large external potential forces and vacuum. SIAM J.
Math. Anal. {\bf 47} (2015), 1121--1153.
%\bibitem{L2}
%Lions,  P. L.  \textit{Mathematical Topics in Fluid Mechanics. Vol. {\bf 1}. Incompressible Models}(Oxford University Press, New York, 1996)
%\bibitem{L1}
%Lions, P. L.   \textit{Mathematical Topics in Fluid Mechanics. Vol. {\bf2}. Compressible Models}(Oxford University Press, New York, 1998)
\bibitem{M0}
Matsumura, A., Padula, M. Stability of the stationary solutions of compressible viscous fluids with large external forces, Stab. Appl. Anal. Cont. Media {\bf 2} (1992), 183--202.
\bibitem{M1}
Matsumura, A., Nishida, T. The initial value problem for the equations of motion of viscous and heat-conductive gases. J. Math. Kyoto Univ. \textbf{20}(1), 67--104 (1980)
\bibitem{M2}
Matsumura, A., Nishida, T. Initial boundary value problems for the equations of motion of general fluids, in: Proc. Fifth Int. Symp. on
Computing Methods in Appl. Sci. and Engng., 1982, pp. 389--406.
\bibitem{MN2}
Matsumura, A., Nishida, T.  Initial value problem for the equations of motion of compressible viscous and heat-conductive fluids. Comm. Math. Phys. \textbf{89}, 445--464 (1983)
\bibitem{MY1}
Matsumura, A., Yamagata, N. Global weak solutions of the Navier-Stokes equations for
multidimensional compressible flow subject to large external potential forces, Osaka J.
Math., {\bf 38} (2001),  399--418.
\bibitem{Na}
Nash, J.  Le probl\`{e}me de Cauchy pour les \'{e}quations diff\'{e}rentielles d'un fluide g\'{e}n\'{e}ral. Bull. Soc. Math. France. \textbf{90}  (1962), 487--497
\bibitem{Nclm1}
Navier, C. L. M. H. Sur les lois de l'\'{e}quilibre et du mouvement des corps \'{e}lastiques. Mem. Acad. R. Sci. Inst. France {\bf 6} (1827), 369
\bibitem{nir}
Nirenberg, L.  On elliptic partial differential equations. Ann. Scuola Norm. Sup. Pisa. {\bf 13}, 115--162 (1959)
\bibitem{ANIS}
Novotn\'y, A., Stra\v{s}kraba, I. \textit{Introduction to the Mathematical Theory of Compressible Flow.} Oxford Lecture Ser. Math. Appl. Oxford Univ.
Press, Oxford, (2004)
\bibitem{ANIS1}
Novotn\'y, A., Stra\v{s}kraba, I. Stabilization of weak solutions to compressible Navier-Stokes equations, J. Math. Kyoto Univ. {\bf 40} (2) (2000), 217--245.
%\bibitem{AM1}
%Novotn\'y, A., Padula, M. Existence and uniqueness of stationary solutions for viscous compressible heat conductive fluid with large potential and small
%non-potential external forces. Sibirsk. Mat. Zh. \textbf{34}(5), 120-146 (1993) (in Russian)
%\bibitem{AM2}
%Novotn\'y, A., Padula, M. $L^p$-approach to steady flows of viscous compressible fluids in exterior domains. Arch. Ration. Mech. Anal. \textbf{126}(3), 243-297 (1994)
%\bibitem{MPa1}
%Padula, M. Existence and uniqueness for viscous steady compressible motions. Arch. Ration. Mech. Anal. \textbf{97}(2), 89-102 (1987)
%\bibitem{S2}
%Salvi, R., Stra\v{s}kraba, I. Global existence for viscous compressible fluids and their behavior as $t\rightarrow \infty$. J. Fac. Sci. Univ. Tokyo Sect. IA. Math. \textbf{40}, 17-51 (1993)
%\bibitem{Scg1}
%Schwarz, G. \textit{Hodge Decomposition-a Method for Solving Boundary Value Problems}(Lecture Notes in Mathematics, 1607. Springer, Berlin, 1995)
\bibitem{se1}
Serrin, J.  On the uniqueness of compressible fluid motion. Arch. Rational. Mech. Anal. \textbf{3}(1959), 271-288.
\bi{se2}
Serrin, J. \textit{Mathematical Principles of Classical Fluid Mechanics}(Handbuch der Physik (herausgegeben von S. Fl\"{u}gge), Bd. 8/1, Str\"{o}mungsmechanik I (Mitherausgeber C. Truesdell), 125-263. Springer, Berlin-G\"{o}ttingen-Heidelberg, 1959)
%\bibitem{soln1}
%Solonnikov, V. A. On Greens matrices for elliptic boundary problem. I. Proc. Steklov Inst. Math. \textbf{110}, 123-170 (1970)
%\bibitem{soln2}
%Solonnikov, V. A. On Greens matrices for elliptic boundary problem. II. Proc. Steklov Inst. Math. \textbf{116}, 187-226 (1971)
\bibitem{ST1}
Shibata,Y., Tanaka, K. On the steady flow of compressible viscous fluid and its stability with respect to initial disturbance, J. Math. Soc.
Japan {\bf 55}(3) (2003), 797--826.
\bibitem{Sol1}
Solonnikov, V. A., Solvability of initial boundary value problem for the equation of motion of viscous compressible fluid, Steklov Inst. Seminars
in Math. Leningrad {\bf 56} (1976), 128--142.
%\bibitem{Sva1}
%Solonnikov, V. A., \v{S}\v{c}adilov, V. E. A certain boundary value problem for the stationary system of Navier-Stokes equations. Trudy Mat. Inst. Steklov. \textbf{125}, 196-210 (1973)
\bibitem{TA1}
Tani, A. On the first initial-boundary value problem of compressible viscous fluid motion. Publ. RIMS Kyoto Univ. \textbf{13}(1977), 193--253.
%\bibitem{Vka1}
%Vaigant, V. A., Kazhikhov, A.V. On the existence of global solutions of two-dimensional Navier¨CStokes equations of a compressible viscous fluid. Sib. Math. J. \textbf{36}(6), 1108-1141 (1995)
\bibitem{vww}
von Wahl, W. Estimating $\nabla u$ by $\div u$ and $\curl u$. Math. Methods in Applied Sciences. \textbf{15} (1992), 123--143.
\bibitem{Wei1}
Weigant,V. A. An example of the nonexistence with respect to time of the global solution
of Navier-Stokes equations for a compressible viscous barotropic fluid. Dokl. Akad. Nauk,
339 (1994), pp. 155-156 (in Russian); Dokl. Math. {\bf 50} (1995),  397--399 (in English).
%%%%%%%%%Dirchlet ·½³Ì
%\bibitem{Xyxz1}
%Xiao, Y., Xin, Z. On the vanishing viscosity limit for the 3D Navier-Stokes equations with a slip boundary condition. Communications on Pure and Applied Mathematics. \textbf{60}(7), 1027-1055 (2007)
%\bibitem{Yvi1}
%Yudovich, V. I. A two-dimensional non-stationary problem on the flow of an ideal incompressible fluid through a given region. Mat. Sb. \textbf{4}(64), 562-588 (1964)
%\bibitem{ZW}
%Zimmer, William P. \textit{Weakly Differentiable Functions: Sobolev Spaces and Functions of Bounded Variation} (Graduate Texts in Math. \textbf{120}, Springer, New York, 1989)
\bibitem{zl1}
Zlotnik, A. A.    Uniform estimates and stabilization of symmetric solutions of a system of quasilinear equations.  \textit{Diff. Eqs.} \textbf{36}(2000), 701--716.
%\bibitem{F1} Feireisl, E.; Novotny, A.; Petzeltov\'{a}, H.  On the existence of globally defined weak solutions to the
%Navier-Stokes equations. \textit{J. Math. Fluid Mech.}  \textbf{3} (2001),  no. 4, 358-392.
%\bibitem{CANEHS1}
%C. Amrouche and N. E. H. Seloula. $L^p$-theory for vector potentials and Sobolev¡¯s inequalities for vector fields: application to the Stokes equations %with pressure boundary conditions. Math. Models Methods Appl. Sci., 23(1): 37-92, 2013.
%\bibitem{vk}
%V. A. Vaigant, A. V. Kazhikhov, On the existence of global solutions of two-dimensional Navier-Stokes equations of a compressible viscous fluid, Sib. %Math. J. 36 1283-1316.
%\bibitem{X1} Xin, Z. P.
%Blowup of smooth solutions to the compressible {N}avier-{S}tokes
%equation with compact density. \textit{Comm. Pure Appl. Math.}    \textbf{51}
% (1998), 229-240.
%\bibitem{Kaz}
%Kazhikhov, A. V.;  Shelukhin, V. V. Unique global solution with respect to time of initial-boundary value problems for one-dimensional equations of a viscous gas. \textit{Prikl. Mat. Meh.}  \textbf{41}   (1977), 282-291.
%\bibitem{xy1}Xin, Z. P.; Yan, W. On blowup of classical solutions to the compressible Navier-Stokes equations. \textit{Comm. Math. Phys.} \textbf{321} %(2013), no. 2, 529-541.
%\bibitem{Ser1} Serre, D.
%Solutions faibles globales des \'equations de Navier-Stokes pour un
%fluide compressible. \textit{C. R. Acad. Sci. Paris S\'er. I Math.}
% \textbf{303} (1986), 639-642.
%\bi{PrL1}
%Prandtl, L.  \"{U}ber Fl\"{u}ssigkeitsbewegungen bei sehr Kleiner Reibung, in Verh. Int. Math. Kongr., Heidelberg 1904, Teubner, 1905, 484-494.
%\bibitem{Ser2} Serre, D.
%Sur l'\'equation monodimensionnelle d'un fluide visqueux,
%compressible et conducteur de chaleur. \textit{C. R. Acad. Sci. Paris S\'er.
%I Math.}  \textbf{303} (1986), 703-706.
%\bibitem{R} Rozanova, O.   Blow up of smooth solutions to the compressible Navier-Stokes equations with the data highly decreasing at infinity. %\textit{J. Differ. Eqs.}   \textbf{245} (2008),  1762-1774.
%\bi{hlx3} Z. Luo. Global existence of classical solutions to
%two-dimensional Navier-Stokes equations
%with Cauchy data containing vacuum,  {\it Math. Methods Appl. Sci.} {\bf 37} (2014), no. 9, 1333-1352.
%\bibitem{hlma}   Li, J.; Liang,   Z.  On    classical solutions   to the Cauchy problem of the two-dimensional barotropic
%compressible Navier-Stokes equations with vacuum. \textit{J. Math. Pures Appl. (9)} \textbf{102}  (2014), no. 4, 640-671.
%\bibitem{hx2} Huang, X. D.; Li, J.; Xin Z. P.
%Blowup criterion for viscous barotropic flows with vacuum states.
%\textit{Comm. Math. Phys.}  \textbf{301} (2011), no. 1, 23-35.
%\bibitem{hof2002}Hoff, D.  Dynamics of singularity surfaces for compressible,
%viscous flows in two space dimensions. \textit{Comm. Pure Appl. Math.} \textbf{55}(2002), no. 11,  1365-1407.
%\bibitem{la} Gilbarg  D.;  Trudinger  N. S. Elliptic partial differential equations of second order. Second edition. Springer-Verlag, Berlin (1983).
%\bibitem{Hof} Hoff, D.
%Global existence for 1D, compressible, isentropic Navier-Stokes
%equations with large initial data. \textit{Trans. Amer. Math. Soc.} \textbf{
%303} (1987), no. 1, 169-181.
%%%%%%%%%%%%%%%%%%%%%%%
%\bibitem{vk1} ¸ñʽ¶þ±¸ÓÃ
%V. A. Vaigant and A. V. Kazhikhov, On the existence of global solutions to twodimensional Navier¨CStokes equations of a compressible viscous fluid (in %Russian), Sibirskij Mat. Z. 36 (6) (1995), 1283¨C1316.
 \end{thebibliography}
\end{document}